\DeclareSymbolFontAlphabet{\mathbbm}{bbold}
\DeclareSymbolFontAlphabet{\mathbb}{AMSb}
\renewcommand*\env@matrix[1][*\c@MaxMatrixCols c]{%
  \hskip -\arraycolsep
  \let\@ifnextchar\new@ifnextchar
  \array{#1}}
\newcommand{\iyng}[1]{{\tiny\Yvcentermath1 \yng(#1)}}
\tikzset{
	ch/.style={circle,draw,on chain,inner sep=2pt},
	chj/.style={ch,join},
	every path/.style={shorten >=4pt,shorten <=4pt}
	}
\numberwithin{equation}{subsection}
\def\<{\langle}
\def\>{\rangle}
\newcommand{\cF}{\mathcal F}
\newcommand{\cH}{\mathcal{H}}
\newcommand{\cN}{\mathcal{N}}
\newcommand{\tcN}{\widetilde{\cN}}
\newcommand{\cO}{\mathcal{O}}
\newcommand{\CC}{\mathbb{C}}
\newcommand{\codim}{\mathop{\mrm{codim}}}
\newcommand{\crl}{\curvearrowleft}
\newcommand{\diag}{\mrm{diag}}
\newcommand{\End}{\mrm{End}}
\newcommand{\aff}{\mrm{aff}}
\newcommand{\Fstd}{F^{\textup{std}}}
\newcommand{\fa}{\mathfrak{a}}
\newcommand{\fB}{\mathfrak{B}}
\newcommand{\fb}{\mathfrak{b}}
\newcommand{\fgl}{\mathfrak{gl}}
\newcommand{\fn}{\mathfrak{n}}
\newcommand{\gl}{\mathfrak{gl}}
\newcommand{\GL}{\mrm{GL}}
\newcommand{\Hom}{\mrm{Hom}}
\newcommand{\HBMt}{H^{\mrm{BM}}_{\mrm{top}}}
\newcommand{\HBM}{H^{\mrm{BM}}}
\newcommand{\II}{\mathbb{I}}
\newcommand{\Ind}{{\mrm{Ind}}}
\newcommand{\Infl}{\mathop{\mrm{Infl}}}
\newcommand{\Irr}{\mathop{\mrm{Irr}}}
\newcommand{\id}{\mathbbm{1}}
\newcommand{\inv}{^{-1}}
\newcommand{\mrm}{\mathrm}
\newcommand{\PP}{\mathbb{P}}
\newcommand{\QQ}{\mathbb{Q}}
\newcommand{\red}[1]{{\color{black}#1}}
\newcommand{\redtwo}[1]{{\color{black}#1}}
\newcommand{\RR}{\mathbb{R}}
\newcommand{\Stab}{\textup{Stab}}
\newcommand{\tif}{\textup{if }}
\newcommand{\ZZ}{\mathbb{Z}}
\newcommand\twoheaduparrow{\mathrel{\rotatebox{90}{$\twoheadrightarrow$}}}
\newenvironment{enua}{\begin{enumerate}[label=\textup{(\alph*)}]
}{\end{enumerate}}
\newenvironment{enuA}{\begin{enumerate}[label=\textup{(\Alph*)}]
}{\end{enumerate}}
\theoremstyle{definition}
\newtheorem{Def}{Definition}[subsection]
\newtheorem{exa}[Def]{Example}
\newtheorem{rmk}[Def]{Remark}
\theoremstyle{plain}
\newtheorem{prop}[Def]{Proposition}
\newtheorem{thm}[Def]{Theorem}
\newtheorem*{thmN}{Main Theorem}
\newtheorem{theorem}[Def]{Theorem}
\newtheorem{lemma}[Def]{Lemma}
\newtheorem{lem}[Def]{Lemma}
\begin{document}

\title{On the Springer correspondence for wreath products}

\begin{abstract} 
We establish a Bruhat decomposition indexed by the wreath product $\Sigma_m\wr \Sigma_d$ between two symmetric groups -- note that $\Sigma_m\wr \Sigma_d$ is not a Coxeter group in general. 
We show that such a decomposition affords a geometric variant in terms of the \red{Bia\l ynicki-Birula} decomposition for varieties with $\mathbb{C}^*$-actions.
Next, we construct a Steinberg variety whose top \red{Borel--Moore} homology realizes the group algebra $\mathbb{Q}[\Sigma_m\wr \Sigma_d]$ as a proper subalgebra.
Such a geometric realization leads to a Springer-type correspondence which identifies the irreducible representations of $\Sigma_m\wr \Sigma_d$ with isotypic components of certain unconventional Springer fibers using type A geometry.
In other words, we obtain a geometric counterpart of the (algebraic) Clifford theory, for the first time.
Consequently, we obtain a new Springer correspondence of Weyl groups of type B/C/D using essentially type A geometry.
\end{abstract}

\author{\sc You-Hung Hsu}
\address{Institute of Mathematics \\ Academia Sinica \\ Taipei 106319, Taiwan} 
\email{youhunghsu@gate.sinica.edu.tw}

\author{\sc Chun-Ju Lai}
\address{Institute of Mathematics \\ Academia Sinica \\ Taipei 106319, Taiwan} 
\email{cjlai@gate.sinica.edu.tw}
\thanks{Research of the authors were supported in part by
NSTC grants 111-2628-M-001-007, 112-2628-M-001-003, and the National Center of Theoretical Sciences}

\keywords{}
\maketitle


\section{Introduction}
\subsection{Wreath Products between Symmetric Groups}
Recall that the Hecke algebra of a Coxeter group is a deformation of its group algebra, with quantized quadratic relations. 
A quantum wreath product introduced in \cite{LNX24}, roughly speaking, 
 is a deformation of the group algebra of the wreath product $G\wr \Sigma_d$ of a (possibly infinite) group $G$ by a symmetric group $\Sigma_d$, with quantized wreath relations and quantized quadratic relations in the sense that coefficients are in certain (not necessarily commutative) tensor algebra.
A prototypical example, introduced by Jun Hu in \cite{Hu02}, is the Hecke subalgebra $\mathcal{A}(m) \subseteq \cH_q(\Sigma_{2m})$ (which we call the Hu algebra) appearing in a Morita equivalence theorem between the Hecke algebras of type A and of type $D_{2m}$. 
In other words, the Hu algebra is a unconventional deformation of the group algebra of $\Sigma_m \wr \Sigma_2$.

It is shown in \cite{LNX24} that $\mathcal{A}(m)$ and its generalization $\cH_q({m\wr d})$ shared many favorable properties with the Hecke algebras.
In particular,  a positivity pattern in terms of the dual \red{Kazhdan--Lusztig} basis is observed. 
It is thus tempting to claim that $\cH_q({m\wr d})$ should be regarded as the Hecke algebra for the wreath product $\Sigma_m \wr \Sigma_d$.

While $\Sigma_2 \wr \Sigma_d$, the Weyl group of type B, affords a theory of Hecke algebras, as a special case of the \red{Ariki--Koike} algebras \cite{AK94} that quantize the wreath product $C_m \wr \Sigma_d$; our wreath product $\Sigma_m \wr \Sigma_d$ between symmetric groups are not complex reflection groups in general, and hence the most general theory of Hecke algebras due to \red{Brou\'e--Malle--Rouquier} \cite{BMR98} does not apply.
\subsection{The Lagrangian Construction}
On one hand, it is well-known that the \red{Ariki--Koike} algebras are cyclotomic quotients of the affine Hecke algebras of type A, 
which can be obtained via the equivariant K-theory on the Steinberg varieties (see \cite{KL87}).
On the other hand, a classical (i.e., $q \mapsto 1$) result is also available via a Lagrangian construction due to Ginzburg \cite{Gi86} and \red{Kashiwara--Tanisaki} \cite{KT84} by considering the \red{Borel--Moore} homology. 

In \cite{CG97}, it is established a uniform geometric approach to the representation theory of the Weyl groups, their corresponding Lie algebras, and various quantizations of these objects.
Such a Lagrangian construction appeared also in \cite{Li21} for certain symmetric pairs.
In this paper, we show that the wreath product $\Sigma_m \wr \Sigma_d$  is a new example that affords a Lagrangian construction of a similar flavor.

\subsection{Bruhat Decompositions for Wreath Products}
A typical ingredient of geometric representation theory is the Tits system, or, an algebraic group $G$ admitting a Bruhat decomposition $G = \bigsqcup_{w \in \Sigma_m \wr \Sigma_d} BwB$, indexed by $\Sigma_m \wr \Sigma_d$, for some subgroup $B \subseteq G$.

We construct a quadruple $(\GL_m \wr \Sigma_d, B_m^d, N_{m\wr d}, \Sigma_m \wr \Sigma_d)$, and then investigate how far it is away to form a Tits system. 
Our first result is that a Bruhat decomposition with respect to $\Sigma_m \wr \Sigma_d$ does exist, even when $\Sigma_m \wr \Sigma_d$ is not a Coxeter group. 
Our proof utilizes Iwahori's generalized Tits system \cite{I65}, which was originally developed to deal with the subtle difference between the extended and non-extended affine Weyl groups (and hence  extended versus non-extended  Hecke algebras).

\subsection{Wreath Steinberg Varieties} \label{1.4}
In a Tits system $(G, B, N,W)$, the quotient $G/B$ identifies with the flag variety. 
For our new generalized Tits system $(\GL_m \wr \Sigma_d, B_m^d, N_{m\wr d}, \Sigma_m \wr \Sigma_d)$,
we introduce the {\em wreath flag varieties} $\cF l_{m \wr d}$, which carries an action of the wreath product $\GL_m \wr \Sigma_d$ 
and can be identified with the quotient $(\GL_m \wr \Sigma_d)/B^d_m$.


While one main feature for the flag variety for $\GL_n$ 
is the natural bijection
$\GL_n/B_m \to \fB_m \to \cF l_m$, 
where $\fB_m$ is the variety of all Borel subalgebras in $\fgl_m$, and $\cF l_m$ is the variety of complete flags in $\CC^m$. 
We remark that such bijections only partially generalize in our setup for wreath flag variety. 
The reason is that the obvious analog $\fB_{m \wr d} := \fB_{m}^d$ is  not in bijection with the other varieties.  

Thus, we  have to avoid using arguments involving Borel subalgebras.
We then  construct a desirable Steinberg variety $Z_{m \wr d}$ via an unconventional Springer resolution.
In Table~\ref{table:1} below, we provide a comparison of the two setups.

\begin{table}[h!]
\centering
\begin{tabular}{|l||l|ll|} 
 \hline
  & Type A Construction   & Wreath Construction & \\ [0.5ex] 
 \hline\hline
(generalized) Weyl group  &$\Sigma_m$ & $\Sigma_m \wr \Sigma_d$ &
 \\
 Lie group & $\GL_m$ & $ {G}_{m\wr d} \subseteq \GL_{md}$ & Def.~\ref{def:Gmwrd}
 \\
&&$\cong \GL_m \wr \Sigma_d$& Prop.~\ref{prop:GLmwrd}
 \\
BN-pair &$B_m, ~ N_m \subseteq \GL_m$& $B_m^d, ~ N_m \wr \Sigma_d  \subseteq G_{m \wr d} $ &\eqref{asm:N}
\\
Flag variety & $\cF l_m$ & $\cF l_{m \wr d} \subseteq \cF l_{md}$ &Def.~\ref{wrflag} 
\\
&&$\cong \cF l_m \wr \Sigma_d $& Prop.~\ref{compatabilitywrfl}(a)
\\
\quad Homogeneous space & $\cong \GL_m / B_m$ & $\cong {G}_{m \wr d}/B_m^d$ & Prop.~\ref{compatabilitywrfl}(b)
\\
\quad Set of all Borels & $\cong \fB_m$ & irrelevant & non-isomorphic
\\
Nilpotent cone & $\cN_m \subseteq \fgl_m \crl \GL_m$ & $\cN_m^d \subseteq \fgl_m^d \crl G_{m \wr d}$ &\eqref{nilact}
\\
&$\twoheaduparrow$&$\twoheaduparrow$&
\\
Springer resolution  &$\widetilde{\cN}_{m}:=T^*\cF l_m$ & $\widetilde{\cN}_{m \wr d}:=T^*\cF l_{m \wr d} $&\eqref{wreath-springer-resolution}
\\
Steinberg variety & $Z_m :=\widetilde{\cN}_{m} \times_{\cN_m} \widetilde{\cN}_{m}$ & $Z_{m\wr d} :=\widetilde{\cN}_{m\wr d} \times_{\cN_m^d} \widetilde{\cN}_{m\wr d}$& Def.~\ref{Steinberg}
 \\ [0.5ex] 
 \hline
\end{tabular}
\caption{A comparison of objects in the two constructions}
\label{table:1}
\end{table}
\subsection{Main results}
With the above setup, we are able to prove the first few steps towards a geometric representation theory for wreath products, which are summarized below:
\begin{thmN}
\begin{enuA}
\item (Corollary~\ref{gtsbd}, Theorem~\ref{thm:GBD}) There is a Bruhat decomposition of $G$ with Bruhat cells indexed by $\Sigma_m \wr \Sigma_d$, for some algebraic groups $B, G$. 
Moreover, it affords a geometric variant $G/B = \bigsqcup_{w \in \Sigma_m \wr \Sigma_d} Bw B/B$ via the \red{Bia\l ynicki-Birula} decomposition \cite{BB73}.
\item (Theorem~\ref{mainthm}) The top \red{Borel--Moore} homology of $Z_{m \wr d}$ realizes the group algebra $\QQ[\Sigma_m \wr \Sigma_d]$ as a proper subalgebra.
\item (Theorem~\ref{irredrep}, Proposition~\ref{prop:C(x)id}, Corollary~\ref{identindexset}) There is a Springer correspondence between irreducibles over $\CC[\Sigma_m \wr \Sigma_d]$ and certain isotypic components of the top \red{Borel--Moore} homology of Springer fibers. 
Moreover, an identification with the Clifford theory is obtained:
\[
\widehat{\Sigma_m \wr \Sigma_d} 
= \{ \HBMt(\fB_x)_\psi ~|~ [x,\psi] \in I^S_{m \wr d}\}
= \{ \Ind_{\Sigma_m \wr \Sigma_{\gamma}}^{\Sigma_m \wr \Sigma_d}(\widetilde{S^{\gamma}} \otimes \Infl S^\bblambda) ~|~ \bblambda \in I^C_{m \wr d}\},
\]
where the bijection $I^S_{m \wr d} \to I^C_{m \wr d}$ between indices describing the same simple module is also given.
\end{enuA}
\end{thmN}
\begin{rmk}
\enua
\item 
For Weyl groups, the Springer correspondence can be obtained via
Ginzburg's construction \cite{Gi86} (see also equivalent constructions of Springer \cite{Sp76, Sp78},  and of Lusztig \cite{Lu81}),
where geometry of the symplectic and orthogonal groups are used for type B/C/D.
\item
By setting either $m=2$ or $d=2$, our Springer correspondence leads to a new Springer correspondence of type B/C/D, using essentially type A geometry (see Section~\ref{sec:EC}).

\item 
To our best knowledge, for disconnected groups, there is a  Springer correspondence appeared in Lusztig's work \cite[11.10(a)]{Lu04}.
Such a correspondence enumerates a disjoint union of irreducibles over certain Coxeter groups,
and it appears that such a set is not in bijection with the set of simples over $\Sigma_m \wr \Sigma_d$ that we consider.

\item
The statements of our main results are regarding the most symmetric possible family of (normalizers of) Levi factors in connected reductive groups.
While we are aware of that some of our results can be generalized to a more general setup with unequal block sizes, we do not pursue this direction since 
(i) the classification of simple modules is eventually reduced to the special case, with equal block size, covered in our main result, and 
(ii) the other examples we want to geometrize all arise from (quantum) wreath products and hence have equal block size.

\item 
From the algebraic side, in \cite{LNX} there is a Clifford-type theory which relates simple module over the quantum wreath product with simple modules over the base algebra $B$. 
We hope that the methods develop in this article can hint on a potential geometrization of quantum wreath products whose base algebra has a geometric origin.
\endenua
\end{rmk}
Furthermore, the proof of our Springer correspondence requires new ideas as in the following.
\subsection{Technicalities} \label{sec:1.5}
Here, we list some essential technicalities that prohibit us to apply the geometric representation theoretic techniques developed in \cite[\S3]{CG97}.
\begin{enumerate}
\item It is not obvious how one can construct a Steinberg variety $Z$ such that its top Borel--Moore homology realizes the group algebra of $\Sigma_m \wr \Sigma_d$. 
The obvious Springer resolution does not work. 
After several unsuccessful attempts, we settle down to an unconventional Springer resolution which leads to a Steinberg variety $Z_{m \wr d}$ that affords the main theorem.
\item The convolution on Borel--Moore homology is generally difficult to compute.
We do not have the most general multiplication formula since the transversality condition in \cite[Theorem 2.7.26]{CG97} does not hold for an arbitrary pair of elements in $\HBMt(Z_{m \wr d})$.
Lacking of such a formula makes it impossible to check whether $\HBMt(Z_{m \wr d})$ is semisimple.
In turn, we cannot apply the powerful tools of Chriss--Ginzburg since we are unable to verify \cite[Claim 3.5.6]{CG97}.

However,  we can still multiply certain pairs of elements in $\HBMt(Z_{m \wr d})$ (see Lemma~\ref{convolution}). Such a multiplication lemma makes it possible to locate a semisimple subalgebra $A_{m\wr d}$ of $\HBMt(Z_{m \wr d})$ with the right dimension. 
\item We will then need to prove a variant of \cite[Theorem 3.5.7]{CG97} that relates representation theory of a certain semisimple subalgebra $A$ of $\HBMt(Z)$ for a Steinberg variety $Z$.
We include all the additional conditions needed for such a variant in Theorem~\ref{thm:irreps}.

While the conditions (A1--2) therein are straightforward generalizations of \red{Chriss--Ginzburg's} conditions (see (C1--2) in Proposition~\ref{prop:GinzLagr}); the condition (A3) actually provides a hint on how one should construct the desired subalgebra $A$. 

\item Last but not least, in the final statement of such a classification theorem, the modules are isotypic components of the top Borel--Moore homology $\HBMt(\fB_x)$ of a Springer fiber, which are well-defined only when  $\HBMt(\fB_x)$ affords a $(\HBMt(Z), C(x))$-bimodule structure. 

However, it can been seen from Example~\ref{ex:A4} that it can be the case that the $G$-action does not commute with the $\HBMt(Z)$-action. Thus, we need to impose condition (A4) so that the final statement makes sense.

\item With Theorem~\ref{thm:irreps} established, the construction of the desired semisimple proper subalgebra $A$ is still non-trivial. 
It is a pleasant surprise for us that the algebra $A_{m \wr d}$ we constructed in Theorem~\ref{mainthm} does satisfy (A2--4). In turn, we obtain the Springer correspondence for wreath products $\Sigma_m \wr \Sigma_d$. 
\end{enumerate}

\red{\subsection{Conventions}
Throughout this article, unless we explict mention, we are working over the field of complex numbers $\CC$. 
}

\vskip .25cm
\noindent {\bf Acknowledgements.}
We acknowledge Cheng-Chiang Tsai and Matthew Douglass for helpful comments on an early version. 
Research of the authors was supported in part by
MSTC grants 113-2628-M-001-011, 112-2628-M-001-003, 111-2628-M-001-007, and the National Center of Theoretical Sciences.
\section{The Wreath Product $\Sigma_m\wr \Sigma_d$} 
\subsection{Wreath Products} 
Denote by $\Sigma_d$ the symmetric group on $d$ letters with simple transpositions $\red{s_j := (j~j+1)}$ for $1\leq j < d$.
For any set $X$, we define a set
\eq\label{eq:XwrG}
X\wr \Sigma_d :=  X^d \times \Sigma_d.
\endeq
For $w\in \Sigma_d$, $g_i \in X$,
by a slight abuse of notation, 
we use the shorthand notation below: 
\eq\label{eq:shorthand}
(g_1, \dots, g_d, 1_{\Sigma_d}) =: (g_i)_i,
\quad
(1_G, \dots, 1_G, w) =: w \in X\wr \Sigma_d,
\endeq
where the later shorthand only makes sense when $X=G$ is a group.
In this case, the wreath product carries a group structure $G\wr \Sigma_d = G^d \rtimes \Sigma_d$ from a semidirect product, whose multiplication is determined by 
\eq \label{wreathproductgroupstructure}
w(g_1, \dots, g_d) = (g_{w^{-1}(1)}, \dots,  g_{w^{-1}(d)})w \in G \wr \Sigma_d
\quad
\textup{for}
\quad
w\in\Sigma_d,
g_i \in G.
\endeq
For $g \in G$, we write 
\eq\label{def:upj}
g^{(j)} = (1_G^{j-1}, g, 1_G^{d-j})  \in G^d.
\endeq
Thus, for each $g\in G$, those $g^{(j)}$ are conjugate to each other since $g^{(j+1)} = t_j g^{(j)} t_j \in G \wr \Sigma_d$.

\exa\label{ex:Smd}
In particular, $\Sigma_m \wr \Sigma_d$ can be thought as a subgroup of $\Sigma_{md}$ generated by simple transpositions $\redtwo{s_i = (i~i+1)}$ for $1\leq i \leq m-1$ and the following elements $\redtwo{t_1, \dots, t_{d-1}}$ of length $m^2$:
\eq
t_{j+1}:\{1, \dots, md\} \to \{1, \dots, md\},
\quad
k \mapsto
\begin{cases}
k+m&\tif jm+1\leq k \leq (j+1)m;
\\
k-m &\tif (j+1)m+1\leq k \leq (j+2)m;
\\
k&\textup{otherwise}.
\end{cases} 
\endeq 
\redtwo{To be precise, each $t_i \in \Sigma_{md}$ is identified with the element $(1_{\Sigma_m},...,1_{\Sigma_m}, (i~i+1)) \in \Sigma_m \wr \Sigma_d$.}
Moreover, for $1 \leq i, j <d, s \in \Sigma_m$, \redtwo{the following equality holds in $\Sigma_m \wr \Sigma_d$:}
\eq\label{eq:stts}
t_i s^{(j)}t_i\inv = \begin{cases}
s^{(j+1)} &\tif i=j;
\\
s^{(j-1)} &\tif i=j-1;
\\
s^{(j)} &\textup{otherwise}.
\end{cases}
\endeq
In general, $\Sigma_m \wr \Sigma_d$ is not a Coxeter group except that $\Sigma_m \wr \Sigma_d$ is of type $G(m,1,d)$ when  $m \leq 2$.
\endexa

\red{
The following is another example for wreath product that will be used later. 

\exa \label{ex:C*Sd}
Let $X=\mathbb{G}_m=\CC^{\times}$ be  the one-dimensional complex multiplicative group. Then, we consider the wreath product $X \wr \Sigma_d=\CC^{\times} \wr \Sigma_d=(\CC^{\times})^{d} \rtimes \Sigma_d$, which has a group structure from (\ref{wreathproductgroupstructure}).

On the other hand, we denote $N_d$ to be the group of all monomial $d \times d$ matrices with complex entries. Then, it is clear to see we have the following group isomorphism
\begin{align*}
\CC^{\times} \wr \Sigma_d &\rightarrow N_{d} \\
(a_1, \dots, a_n, w) &\mapsto \sum_{i=1}^n a_iE_{i,w\inv(i)}  
\end{align*} where $E_{ij}$ are the matrices with one at the $i$th row and the $j$th column, and zero else.
\endexa
}
For a composition $\gamma = (\gamma_1, \dots, \gamma_r)$ of $d$, there is an associated Young subgroup $\Sigma_\gamma := \Sigma_{\gamma_1} \times \dots \times \Sigma_{\gamma_r} \subseteq \Sigma_d$.
There is a canonical identification $G\wr \Sigma_\gamma \equiv (G \wr \Sigma_{\gamma_1}) \times \dots \times (G \wr \Sigma_{\gamma_r})$.

\subsection{Tits Systems} 
Our first step is to show that there is a Bruhat decomposition for some group $G_{m\wr d}$ into cells indexed by $\Sigma_m \wr \Sigma_d$.
In order to achieve that, we first examine how far the group $\Sigma_m \wr \Sigma_d$ is from a Weyl group arising from a Tits system.

\Def
A Tits system consists of the following quadruple $(G,B,N,W)$ in which $G$ is generated by its subgroups $B$ and $N$, $W:= N/(B\cap N)$ is well-defined and is generated by involutive elements $s_i (i\in I)$. Moreover, for all $s = s_i$ and \red{$\dot{w} \in W$ with a lift $w \in N$}:
\eq\label{eq:Tits}
\red{s Bw \subseteq BswB \cup BwB,
\quad
sB \neq Bs.}
\endeq
\endDef

\exa \label{ex:Tits}
Let $K$ be a field,  $B_n := B_n(K)$ be the standard Borel subgroup of $\GL_n := \GL_n(K)$, and $N_n:= N_n(K)$ be the group of monomial matrices in $\GL_n$. 
\begin{enumerate}[(i)]
\item
The symmetric group $W = \Sigma_n$ is produced from the Tits system $(\GL_n, B_n, N_n, \Sigma_n)$.
\item 
Let $\QQ_p$ be the field of $p$-adic numbers, and $\ZZ_p$ be the ring of $p$-adic integers.
The (non-extended) affine Weyl group $W = \Sigma^\aff_n$ is produced from  the Tits system $(\GL_n(\QQ_p), B, N_n(\QQ_p), \Sigma^\aff_n)$ where $B = \{(a_{ij}) \in \GL_n(\ZZ_p) ~|~ a_{ij} \in p\ZZ_p ~\tif i>j\}$.
\end{enumerate}
\endexa

\red{
We would like to find a Tits-type system, denoted by $(G_{m \wr d},B_{m \wr d},N_{m \wr d},W_{m \wr d})$, so that $W_{m \wr d} = \Sigma_m \wr \Sigma_d$. From Example \ref{ex:Smd}, we know that $\Sigma_m \wr \Sigma_d$ is a subgroup of $\Sigma_{md}$. Moreover, we also know that $(\GL_{md},B_{md},N_{md},\Sigma_{md})$ is a Tits system from Example \ref{ex:Tits} (i). So, it is natural to expect that the other groups $G_{m \wr d}$, $B_{m \wr d}$, $N_{m \wr d}$ are subgroups of $GL_{md}$, $B_{md}$, $N_{md}$, respectively. Furthermore, one of the axioms for a Tits system is the group $G$ is generated by $B$ and $N$. Thus it suffices to determine the two groups $B_{m \wr d}$ and $N_{m \wr d}$. 

We start from $N_{m \wr d}$. From Example \ref{ex:C*Sd}, we know that there is an isomorphism $N_{md} \cong \CC^{\times} \wr \Sigma_{md}$. By replacing $\Sigma_{md}$ with its subgroup $\Sigma_m \wr \Sigma_d$, we obtain
\eq
\CC^\times \wr (\Sigma_m \wr \Sigma_d) =(\CC^\times \wr \Sigma_m) \wr \Sigma_d \cong N_m \wr \Sigma_d = \{ (A_1, \dots, A_{d}, w)  ~|~ w \in \Sigma_d\} 
\endeq where we use the isomorphism from Example \ref{ex:C*Sd} 
in the second equality. Thus, it is natural to assume that 
\eq \label{asm:N}
N_{m \wr d} \coloneqq N_m \wr \Sigma_d.
\endeq}


Next, having  $\Sigma_m \wr \Sigma_d \cong N_{m \wr d}/(N_{m \wr d} \cap B)$ in mind, we need to find a subgroup $B \subseteq \GL_{md}$ whose intersection with $N_{m \wr d}$ is equal to the subgroup of diagonal matrices in $\GL_{md}$. 
Hence, it makes sense to assume further 
\eq\label{asm:B}
B = B_m^d := \{\textup{diag}(b_1, \dots, b_d) \in  \GL_{md} ~|~ b_i \in B_m ~\textup{for all}~ i\} = B_m \wr \{1_{\Sigma_d}\},
\endeq
where $\textup{diag}(b_1, \dots, b_d)$ is the block matrix obtained by putting $b_i$'s in the diagonal.
Thus, we are in a position to define our group $G$.
\Def\label{def:Gmwrd}
Recall from \eqref{asm:N}--\eqref{asm:B} the subgroups $B_m^d, N_{m\wr d}$ of $\GL_{md}$. Set
\[
G_{m\wr d}:= \langle B_m^d, N_{m\wr d}\rangle \subseteq \GL_{md}.
\] 
\endDef

\red{
\rmk
Although the group $G_{m\wr d}$ is a bit abstract from the above definition, we will show that it is isomorphic to $\GL_{m} \wr \Sigma_d$ in Proposition \ref{prop:GLmwrd}.
\endrmk
}

Now, we look back to our group $W = \Sigma_m \wr \Sigma_d$. 
While its generators $s_1, \dots, s_{m-1}, t_1, \dots, t_{d-1}$ are indeed of order two and that  $s_i$'s satisfy \eqref{eq:Tits}, 
the remaining generators $t_j$'s do not satisfy \eqref{eq:Tits}. 
In particular, $t_j B  = B t_j$ for all $j$.

In the following section, we will see that one can still obtain a Bruhat decomposition for $G_{m\wr d}$ via Iwahori's generalized Tits system.
\subsection{Iwahori's  Generalized Tits system}
Iwahori showed that we can relax the conditions on a Tits system to obtain a large index set $W$ for the cells such that $W$ is a semidirect product of a Coxeter group $W_0$ and a group $\Omega$ consisting of certain elements $t$ such that $t B  = Bt$.
\Def(\cite{I65})
Iwahori's  {\em generalized Tits system} is a  quadruple $(G,B,N,W)$ in which $G$ is generated by its subgroups $B$ and $N$, $W:= N/(B\cap N)$ is well-defined. Moreover,
\eq
W \cong W_0 \rtimes \Omega,
\endeq
where $W_0$ is generated by involutive elements $s_i (i\in I)$,  
any element $t \in \Omega$ normalizes both $B$ and $\{s_i\}_{i\in I}$,
and $Bt \neq B$ unless $t = 1$.
Finally, \eqref{eq:Tits} holds for all $s = s_i$, $w\in W$.
\endDef
%
The generalized Tits systems enjoy the following properties.
\begin{lemma} [\cite{I65}] \label{lemgt} 
Assume that $(G,B,N, W)$ is a generalized Tits system. Then: 
\begin{enua}
	\item There is a Bruhat decomposition $G= \bigsqcup_{w \in W} Bw B$.
	\item $G_0 := BW_0B$ is a normal subgroup of $G$. Moreover, $(G_0, B, N \cap G_0, W_0)$ is a Tits system. In particular, $W_0$ is a Coxeter group.
\end{enua}
\end{lemma}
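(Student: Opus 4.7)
The plan is to reduce to the classical Tits-system theorem applied to the Coxeter piece and then extend by the normalizing group $\Omega$. I would first verify that $(G_0, B, N \cap G_0, W_0)$ is an ordinary Tits system with $G_0 := BW_0B$ as in part (b), and then glue in $\Omega$ using the fact that each $t \in \Omega$ normalizes $B$.

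\textbf{Step 1.} Show that $G_0 = BW_0B$ is a subgroup of $G$. Using the Tits-type axiom $sBw \subseteq BswB \cup BwB$ for each generator $s = s_i$, the standard manipulation yields $BsB \cdot BwB \subseteq BwB \cup BswB$; iterating over a reduced expression of any $w_0 \in W_0$ establishes closure of $BW_0B$ under multiplication, and inversion is immediate from $s_i^2 = 1$.

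\textbf{Step 2.} Extend by $\Omega$. Each $t \in \Omega$ satisfies $tB = Bt$ and normalizes $\{s_i\}_{i \in I}$, hence normalizes $W_0$ and $G_0$. Choosing a set-theoretic lift $\widetilde{\Omega} \subseteq N$ of $\Omega$, one has $G = \langle B, N \rangle = G_0 \cdot \widetilde{\Omega}$, because any element of $N$ is a product of a preimage of $W_0$ (which lies in $N \cap G_0$) with an element of $\widetilde{\Omega}$. For $w = w_0 t$ with $w_0 \in W_0$ and $t \in \Omega$, the cell $BwB = Bw_0 B \cdot t$ is well defined. Applying the classical Tits-system theorem to $(G_0, B, N \cap G_0, W_0)$ gives $G_0 = \bigsqcup_{w_0 \in W_0} Bw_0 B$. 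To promote this to the full decomposition in (a), I would show the natural map $\widetilde{\Omega} \to G/G_0$ is a bijection; this reduces to checking that the hypothesis $Bt \neq B$ for nontrivial $t \in \Omega$ prevents any collapse of distinct cells across the $\Omega$-directions.

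For part (b), normality of $G_0$ in $G = G_0 \widetilde{\Omega}$ is automatic from the normalization of $G_0$ by $\widetilde{\Omega}$. The Tits-system axioms for $(G_0, B, N \cap G_0, W_0)$ are inherited: $G_0 = \langle B, N \cap G_0 \rangle$ by construction, $(N \cap G_0)/(B \cap N) \cong W_0$ from the semidirect-product structure on $W$, the $s_i$ are involutive, and the axioms $sBw \subseteq BswB \cup BwB$ and $sB \neq Bs$ are carried over verbatim from the hypothesis. The main obstacle I anticipate is the faithfulness step, namely verifying that the Bruhat cells of $G$ are indexed by all of $W$ rather than by a proper quotient; this is exactly where the axiom $Bt \neq B$ for $t \neq 1$ does its essential work, gluing the Coxeter piece $W_0$ to the $\Omega$-piece without identifications.
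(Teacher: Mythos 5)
The paper itself gives no proof of this lemma; it is imported wholesale from Iwahori \cite{I65}, so the only benchmark is the standard literature argument, which is precisely the route you take. Your Steps 1 and 2 are sound: the axiom $sBw\subseteq BswB\cup BwB$ gives $BsB\cdot BwB\subseteq BswB\cup BwB$, so $G_0=BW_0B$ is a subgroup; each $t\in\Omega$ normalizes $B$ and permutes the $s_i$, hence normalizes $W_0$ and $G_0$; and $G=\langle B,N\rangle=BWB$ with $BwB=Bw_0B\,t$ for $w=w_0t$. Normality of $G_0$ and the inheritance of the Tits axioms by $(G_0,B,N\cap G_0,W_0)$ go through as you say, with one bookkeeping caveat: the identification $(N\cap G_0)/(B\cap N)=W_0$ is cleanest if you first work with the preimage $N_0$ of $W_0$ in $N$, check $\langle B,N_0\rangle=BW_0B=G_0$ directly, and only after disjointness conclude $N\cap G_0=N_0$; as written, your appeal to the semidirect-product structure quietly assumes this identification before it is available.

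The one place where ``I would show'' stands in for an argument is the disjointness of the cells, and that is the only step that genuinely needs writing out. It closes as follows. Suppose $Bw_0tB=Bw_0't'B$ with $w_0,w_0'\in W_0$ and $t,t'\in\Omega$. Using $tB=Bt$ this reads $Bw_0B\,t=Bw_0'B\,t'$, so a representative $\dot t''$ of $t'':=t't^{-1}\in\Omega$ lies in $B\dot w_0'^{-1}B\dot w_0B\subseteq G_0$. By the classical Tits-system theorem applied to $(G_0,B,N_0,W_0)$, this forces $B\dot t''B=B\dot vB$ for some $v\in W_0$. But $B\dot t''B=\dot t''B=B\dot t''$ is a single coset, so $\dot v$ normalizes $B$; and in a Tits system no $v\neq 1$ in $W_0$ can normalize $B$, since otherwise choosing $s$ with $\ell(sv)<\ell(v)$ would give $BsB\cdot BvB=BsvB$, contradicting the standard identity $BsB\cdot BvB=BsvB\sqcup BvB$ in that case. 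Hence $v=1$, so $\dot t''B=B$, and the axiom $Bt''\neq B$ unless $t''=1$ forces $t=t'$; then $Bw_0B=Bw_0'B$ gives $w_0=w_0'$ by the classical theorem. With this paragraph supplied, your proposal is a complete and correct proof.
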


Then we have the following result.

\prop \label{prop1} \label{gts}
The quadruple $(G_{m \wr d}, B_{m}^d, N_{m \wr d}, \Sigma_m \wr \Sigma_d)$  forms a generalized Tits system. 
\endprop

\proof
Let $T_m:=B_m \cap N_m$. We first check that $T:= B_{m}^d \cap N_{m \wr d} = T_m^d $ is a normal subgroup of $N_{m \wr d}$. 
Recall the shorthand we adapt in \eqref{eq:shorthand}\red{, and the assumption $N_{m \wr d}=N_m \wr \Sigma_d$ from (\ref{asm:N}).} Indeed, for any $(\tau_i)_i \in T$ and $\gamma = (g_i)_i w \in N_{m \wr d}$,
\eq
\gamma (\tau_i)_i  \gamma\inv 
=(g_i)_i w (\tau_i)_i w\inv (g_i\inv)_i 
=(g_i\tau_{w\inv(i)})_i w  w\inv (g_i\inv)_i 
= (g_i\tau_{w\inv(i)}g\inv_i)_i  \in T,
\endeq since each $g_i \in N_m$. 
Thus,  
$W= N_{m \wr d}/T 
\red{=} (N_m \wr \Sigma_d)/T_m^d  
= (N_m/T_m) \wr \Sigma_d 
\cong \red{\Sigma_m  \wr \Sigma_d}$.

Following the notations in Example~\ref{ex:Smd}, the semidirect product $W = W_0 \rtimes \Omega$ can be described via
\eq
W_0 = N_m^d/T \cong \red{\langle s^{(b)}_a |1 \leq a < m, 1 \leq b \leq d \rangle }
\quad 
\Omega = \red{\{1_{\Sigma_m}\} \wr \Sigma_d = \langle t_k \in \Sigma_m \wr \Sigma_d ~|~ 1\leq k < d\rangle,}
\endeq
where \red{we recall the notation $s_a^{(b)}$ from \eqref{def:upj}.} 
It then follows from \eqref{eq:stts} that  $\Omega$ normalizes \red{$\{s^{(b)}_a |1 \leq a < m, 1 \leq b \leq d\}$}.


In order to verify whether $t_k \in \Omega$ normalizes $B$, we write $t_k = \dot{t}_k T$ for a fixed representative $\dot{t}_k \in  N_{m \wr d}$. Then, any element in $B t_k$ is of the form $(b_i)_i \dot{t}_k T_m^d$ for some $(b_i)_i \in B_m^d$, and thus 
\eq
(b_i)_i \dot{t}_k T_m^d = \dot{t}_k (b_{t_k(i)})_i T_m^d \in \dot{t}_k BT = \dot{t}_k TB = t_k B. 
\endeq 
That is, $tB = Bt$ for all $t \in \Omega$, by symmetry.
Moreover, for $t \in\Omega$, $Bt \neq B$ unless $t=1$.

Finally, we verify that \eqref{eq:Tits} holds for all $s = \sigma^{(j)} ( \red{\sigma=s_i \in \Sigma_m})$ and $w = (w_i)_i t \in W (t\in \Omega)$:
Note that $sBw = \sigma^{(j)} B_m^d (w_i)_i  t = (\pi_i B_m w_i)_i  t$ where $\pi_i$ is identity except for that $\pi_j = \sigma$. 
Since $(\GL_m, B_m ,N_m, \Sigma_m)$ is a Tits system, \red{$\sigma B_m w_j \subseteq B_m \sigma w_j B_m \cup B_m w_j B_m$ for all $\sigma=s_i \in \Sigma_m$}, and hence
\eq
sBw \subseteq B \sigma^{(j)} (w_i)_i B t \cup B (w_i)_i B t =  B \sigma^{(j)} (w_i)_i t B  \cup B (w_i)_i t B = BswB \cup BwB.
\endeq
Also, $sB \neq Bs$ follows from that $\sigma B_m \neq B_m \sigma$.
%
\endproof

\cor \label{gtsbd}
There is a Bruhat decomposition 
\begin{equation} \label{bruhat}
 G_{m \wr d} = \bigsqcup_{w \in \Sigma_{m} \wr \Sigma_{d}} B^d_{m} w B^d_{m}.   
\end{equation}
\endcor
\proof
It follows by combining Lemma~\eqref{lemgt}(a) and Proposition~\ref{prop1}.
\endproof

\rmk
The exact argument in the proof of Proposition~\ref{prop1} can be used to prove that if $(G, B, N, W)$ is a Tits system, then $(\langle B^d, N \wr \Sigma_d \rangle, B^d, N \wr \Sigma_d, W \wr \Sigma_d)$ is a generalized Tits system. 
However, we did not find it relevant to consider these systems other than the case when $W = \Sigma_m$.
\endrmk

\subsection{A New Bruhat Order}

One could have defined a Bruhat order on the subgroup $\Sigma_m \wr \Sigma_d \subseteq \Sigma_{md}$ via the type A Bruhat order $\leq_{md}$. 
However, such a naive definition disagrees with the closure relations via the Bruhat decomposition (Corollary \ref{gtsbd}).
For example, consider the generator $t:= s_2s_3s_1s_2 \in \Sigma_2 \wr \Sigma_2 \subseteq \Sigma_4$. 
While $s_1 \leq_{4} t$ with respect to the Bruhat order of $\Sigma_4$; we will see in Example~\ref{ex:newBruhat} that $t$ is not compatible with $s_1$ in terms of the closure relations.
In other words, elements in $\Sigma_d$ should be treated as zero lengths elements as in the extended affine Weyl groups.

\Def
The Bruhat order $\leq_{m\wr d}$ on $\Sigma_m \wr \Sigma_d$ is given by
\eq
x \leq_{m \wr d} y \quad \iff \quad C(x) \subseteq \overline{C(y)},
\endeq
where $C(w):= B^d_m w B^d_m$ the Bruhat cell for $w\in \Sigma_m \wr \Sigma_d$ in $G_{m \wr d}$.
\endDef
\begin{lemma} \label{order}
The Bruhat order $\leq_{m \wr d}$ has the following combinatorial description: \red{for any $(w_i)_i\sigma, (w'_i)_i\sigma' \in \Sigma_m \wr \Sigma_d$, we have 
\eq\label{def:newBruhat}
(w'_i)_i \sigma' \leq_{m\wr d} (w_i)_i\sigma
\quad\iff\quad
w'_i \leq_{m} w_i \in\Sigma_m \ \textup{for all} \ 1\leq i\leq m, \ \textup{and} \ \sigma'=\sigma \in \Sigma_d, 
\endeq
where $\leq_{m}$ is the corresponding Bruhat order on $\Sigma_m$.}
\end{lemma}

\begin{proof}
Note that 
$G_{m \wr d}$ is disconnected, with connected components  $\{\GL_m^d \times \sigma~|~ \sigma \in \Sigma_d \}$. 
Since each connected component is closed, for $w=(w_i)_i\sigma \in \Sigma_m \wr \Sigma_d$, the closure of $C(w)$
is given by
\begin{equation}
\overline{C(w)}=\overline{B^d_m (w_i)_i\sigma B^d_m}= \prod_{i=1}^d \overline{B_mw_iB_m} \times \{\sigma\}
=  \prod_{i=1}^d  \bigcup_{w'_i \leq w_i}{B_mw'_iB_m} \times \{\sigma\},
\end{equation}
i.e., those $C(w')$ appearing in $\overline{C(w)}$ satisfy the right hand side of \eqref{def:newBruhat}.
%
\end{proof}

\exa\label{ex:newBruhat}
The Hasse diagram for $(\Sigma_2\wr \Sigma_2, \leq_{2\wr 2})$ is depicted as below:
\red{\begin{equation*}
\xymatrix{
&  s^{(1)}_1s^{(2)}_1 \ar[ld] \ar[rd] & & & s^{(1)}_1s^{(2)}_1t_1 \ar[ld] \ar[rd] \\
s^{(1)}_1 \ar[rd] & & s^{(2)}_1 \ar[ld] & s^{(1)}_1t_1 \ar[rd] & & s^{(2)}_1t_1 \ar[ld] \\
& e & & & t_1
}
\end{equation*}}
We remark that this new Bruhat order for wreath products $\Sigma_2 \wr \Sigma_d$ does not coincide with the (finer) Bruhat order of the type B Weyl group $W(B_d)$.
Recall that as a Coxeter group, $W(B_d)$ is generated by $s^B_0, \dots, s^B_{d-1}$, in which $s^B_0$ corresponds to the type B node in the Dynkin diagram.
The canonical isomorphism $\Sigma_2 \wr \Sigma_d \to W(B_d)$ is given by $s_1 \mapsto s^B_0,\  t_i \mapsto s_i^B$ for $1\leq i < d$. Thus, the Hasse diagram of $W(B_2)$ is as below, in which we use a shorthand notation $s^B_{i_1\dots i_N} := s^B_{i_1} \dots s^B_{i_N}$:

\endexa 
\begin{equation*}
    \xymatrix{
       &  s_1s_3 \equiv s^B_{0101} \ar[rrr] \ar[rd] & & & s_1s_3t \equiv s^B_{010} \ar[ld] \ar[rd] \\
       s_1 \equiv s^B_{0} \ar[rd] & & s_3 \equiv s^B_{101} \ar[r] \ar@/^2.0pc/[rrr]& s_1t \equiv s^B_{01} \ar[rd] \ar@/^1.0pc/[lll]& & s_3t  \equiv s^B_{10}\ar[ld] \ar@/^2.5pc/[lllll]\\
       & e & & & t\equiv s^B_{1} \ar[lll]
    }
\end{equation*}

\subsection{Lie Group Structure}
Recall from Definition~\ref{def:Gmwrd} that $G_{m\wr d}$ is a subgroup of the Lie group $\GL_{md}$.
We now identify the (abstract) wreath product $\GL_{m} \wr \Sigma_d$ with the group $G_{m\wr d}$. 
The proposition below follows from a routine calculation:
\begin{prop}\label{prop:GLmwrd}
The assignment below gives a group isomorphism $\GL_m \wr \Sigma_d \to G_{m \wr d}$:
\eq \label{gpinj}
	(g_i)_iw \mapsto 
\diag(g_1, \dots, g_d) \Theta_w = \Theta_w \diag(g_{w(1)}, \dots, g_{w(d)}),
\endeq
where $\diag(g_1, \dots, g_d)$ is the corresponding block diagonal matrix  in $\GL_{md}$,
and $\Theta_w \in \GL_{md}$ is the permutation matrix corresponding to $(1^d, w) \in \Sigma_m \wr \Sigma_d \subseteq \Sigma_{md}$.
\end{prop}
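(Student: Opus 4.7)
The plan is to verify the three ingredients—well-definedness together with landing in $G_{m\wr d}$, the homomorphism property, and bijectivity—by direct matrix computation. First I would justify the two equivalent forms appearing in \eqref{gpinj} through the conjugation identity
\[
\Theta_w \diag(h_1,\dots,h_d)\Theta_w^{-1} = \diag(h_{w^{-1}(1)},\dots,h_{w^{-1}(d)}),
\]
which is immediate from how the block permutation matrix $\Theta_w$ acts on block-diagonal matrices (its $(i,j)$-block is $I_m$ exactly when $i = w(j)$). Substituting $h_i = g_{w(i)}$ and rearranging recovers $\diag(g_1,\dots,g_d)\Theta_w = \Theta_w \diag(g_{w(1)},\dots,g_{w(d)})$.

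Next, I would check the homomorphism property. In $\GL_m \wr \Sigma_d$ the product $(g_i)_i w \cdot (h_i)_i v$ equals $(g_i h_{w^{-1}(i)})_i \cdot wv$, whose image under \eqref{gpinj} is $\diag(g_i h_{w^{-1}(i)})_i\,\Theta_{wv}$. On the other hand, multiplying the images of the two factors and inserting $\Theta_w^{-1}\Theta_w$ in the middle, the conjugation identity above converts $\Theta_w \diag(h_1,\dots,h_d)$ into $\diag(h_{w^{-1}(i)})_i \Theta_w$, so using $\Theta_w\Theta_v = \Theta_{wv}$ one recovers the same matrix.

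Injectivity is immediate: if $\diag(g_1,\dots,g_d)\Theta_w = I_{md}$, then comparing block positions forces $w = 1_{\Sigma_d}$, and then $g_i = 1_{\GL_m}$ for all $i$. For surjectivity, observe that each $\diag(b_1,\dots,b_d) \in B_m^d$ is the image of $(b_i)_i \cdot 1_{\Sigma_d}$, and each element of $N_{m\wr d} = N_m\wr \Sigma_d$ has the form $\diag(a_1,\dots,a_d)\Theta_w$ with $a_i \in N_m$, hence is the image of $(a_i)_i w$. Since the image of a homomorphism is a subgroup and $G_{m\wr d}$ is by Definition~\ref{def:Gmwrd} generated by $B_m^d$ and $N_{m\wr d}$, the image equals $G_{m\wr d}$. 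There is no substantive obstacle here; the only care required is consistently tracking whether $w$ or $w^{-1}$ appears when sliding diagonal blocks past $\Theta_w$, which is exactly the bookkeeping that makes the abstract semidirect-product multiplication match matrix multiplication.
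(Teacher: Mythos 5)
Your verification is correct and supplies exactly the ``routine calculation'' that the paper invokes for Proposition~\ref{prop:GLmwrd} without writing it out: the conjugation identity for $\Theta_w$, the matching of the semidirect-product multiplication with matrix multiplication, and bijectivity via the generators $B_m^d$ and $N_{m\wr d}$. The only step you leave implicit is the containment of the image \emph{inside} $G_{m\wr d}$ (needed for the map to land in the stated codomain); this is immediate since $\diag(g_1,\dots,g_d)\in\langle B_m^d,\,N_m^d\rangle\subseteq G_{m\wr d}$ by the Bruhat decomposition $\GL_m=B_mN_mB_m$ and $\Theta_w\in N_{m\wr d}$, so it does not affect the correctness of your argument.
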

For example, when $d=3$, the tuple $(A_1, A_2, A_3, \red{(1~2~3)})$ is sent to 
\eq
\begin{psmallmatrix} A_1&0&0 \\ 0 &A_2&0 \\ 0& 0& A_3 \end{psmallmatrix}
\begin{psmallmatrix} 0 & 0 & I_m \\ I_m&0&0 \\ 0&I_m&0 \end{psmallmatrix}
=
\begin{psmallmatrix} 0 & 0 & A_1 \\ A_2&0&0 \\ 0&A_3&0 \end{psmallmatrix}
=
\begin{psmallmatrix} 0 & 0 & I_m \\ I_m&0&0 \\ 0&I_m&0 \end{psmallmatrix}
\begin{psmallmatrix} A_2&0&0 \\ 0 &A_3&0 \\ 0& 0& A_1 \end{psmallmatrix}.
\endeq

\rmk\label{rmk:LieA}
The subgroup $G_{m \wr d} \subseteq \GL_{md}$ is a {\em matrix Lie group} in the sense that, 
for any sequence $\{A_i\}_{i \geq 1}$ in $G_{m \wr d}$ that converges to some matrix $A$, either $A \in G_{m \wr d}$ or $A$ is not invertible.
Thus, $G_{m \wr d}$ is a closed subgroup of $\GL_{md}$.
Applying the closed subgroup theorem, one gets that
\eq
\textup{Lie}(G_{m \wr d}) = \{ X \in \gl_{md} ~|~ e^{tX} \in G_{m \wr d} \textup{~for all~} t \in \RR \} = \gl_m^d.
\endeq
\endrmk

\section{Flag Varieties}
\subsection{Complete Flag Varieties}
For now, let $\GL_n = \GL_n(\redtwo{\CC})$, $B_n = B_n(\redtwo{\CC})$ with corresponding Lie algebras $\fgl_n = \textup{Lie}(\GL_n)$ and $\fb_n = \textup{Lie}(B_n)$, respectively.
It is well-known (see \cite[\S3]{CG97}) that the following maps are bijections:
\eq\label{eq:FLequiv}
\GL_n/B_n \to \fB_n \to \cF l_n,
\quad
g B_n \mapsto g \fb_n g\inv \mapsto g \Fstd_\bullet,
\endeq
where $\fB_n :=\{ \textup{solvable Lie subalgebra } \fa \subseteq \fgl_n ~|~ \dim \fa = {\dim \fb_n}\} \hookrightarrow \PP(\bigwedge^{{\dim \fb_n}} \fgl_n)$ is the projective variety consisting of all Borel subalgebras of $\fgl_n$,
and $\cF l_n \hookrightarrow \prod_i \PP(\bigwedge^i \redtwo{\CC}^{n})$ is the (projective) complete flag variety, in which the standard flag in $\redtwo{\CC}^{n}$ is denoted by $\Fstd_\bullet$. 

\rmk
In our case, it turns out that one can only establish a bijection $G_{m \wr d}/B^d_m  \cong \cF l_{m \wr d}$ for some projective variety $\cF l_{m \wr d}$ to be constructed. Note that, as we point out in Subsection \ref{1.4}, the variety $\fB_{m \wr d}$ of all Borel subalgebras in the wreath setting is still $\fB^d_m$, which is  not in bijection with $G_{m \wr d} / B_m^d$. For example, when $m=d=2$,  
we can pick $\gamma = (g_1, g_2)s, \gamma' = (g_1, g_2) \in G_{2 \wr 2}$ where $g_i \in \GL_2,  \Sigma_2 = \{1,s\}$ such that, for any  $(\fb, \fb) \in \fB^2_2$:
\begin{equation}
	\gamma (\fb,\fb) \gamma^{-1}=(g_1\fb g^{-1}_1, g_2\fb g^{-1}_2) = \gamma'(\fb, \fb)(\gamma')^{-1}.
\end{equation}
\endrmk

\subsection{Wreath Flags}
%
%
%
%

We first construct a subvariety $\cF l_{m \wr d} \subseteq \cF l_{md}$ which affords a $G_{m\wr d}$-equivariant bijection with the homogeneous space $G_{m \wr d}/B^d_m$.
We will show that $\cF l_{m \wr d}$ can be identified with the wreath product $\cF l_m \wr \Sigma_d$.
Such an identification allows us to verify technical conditions in our proof of Springer correspondence for wreath products.

Fix a basis $\{e_1, \dots, e_{md}\}$ of $\redtwo{\CC}^{md}$, and let $V_i=\textup{Span}_\redtwo{\CC}\{e_{(i-1)m+1},...,e_{im}\}$ for $1\leq i \leq d$.

\Def \label{wrflag} 
Denote the set which we call the {\em wreath flag variety} by 
\eq
\cF l_{m\wr d} := \{ F_\bullet \in \cF l_{md} ~|~ F_{im} = V_{w(1)} \oplus \dots \oplus V_{w(i)} \textup{ for all }1\leq i \leq d, \textup{ for some }w\in \Sigma_d\} .
\endeq 
\endDef
For any complete flag $F_\bullet$ in $V_1$ (i.e.,  $F_\bullet \in \cF l_m$), denote by ${}^iF_\bullet$ the corresponding complete flag in $V_i$ obtained by shifting the indices of basis elements by $(i-1)m$.
In particular, the corresponding standard flag in each $V_i$ is
\eq
{}^i\Fstd_\bullet = (
 0 \subseteq \redtwo{\CC} e_{(i-1)m+1} \subseteq \redtwo{\CC} e_{(i-1)m+1} \oplus \redtwo{\CC} e_{(i-1)m+2} \subseteq \dots \subseteq   V_i).
\endeq 
\begin{prop} \label{compatabilitywrfl}
\begin{enua}
\item $\cF l _m \wr \Sigma_d \equiv \cF l_{m \wr d}$ as sets via $(F^1_\bullet, \dots, F^d_\bullet, w) \mapsto \cF_\bullet = \cF_\bullet(F^1_\bullet, \dots, F^d_\bullet, w)$, where 
\eq
\cF_{im+j} = V_{w(1)} \oplus \dots \oplus V_{w(i)} \oplus ({}^{w(i+1)}F^{w(i+1)}_j),
\quad
(0\leq i < d, \red{1\leq j \leq m}). 
\endeq
\item  There is a natural identification
$G_{m \wr d}/B^d_m \to \cF l_{m \wr d}$,  $(g_i)_iwB^d_m \mapsto (g_1\Fstd_{\bullet},...,g_d\Fstd_{\bullet},w)$ \red{where we use the identification from part (a) in the image.}
\item Under the identifications in Part (a) and in Proposition~\ref{gpinj}, the $G_{m\wr d}$-action on $\cF l_{m \wr d}$ given by
\eq \label{GactionFL}
(g_i)_iw  (F^1_{\bullet},...,F^d_{\bullet}, \sigma) = (g_1F^{w^{-1}(1)}_{\bullet},...,g_dF^{w^{-1}(d)}_{\bullet},w\sigma)
\endeq
is compatible with the $\GL_{md}$-action on $\cF l_{md}$.
\end{enua}
\end{prop}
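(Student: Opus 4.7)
The plan is to prove the three parts in order, since each uses the previous.

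For part (a), I would first verify well-definedness of the map $(F^1_\bullet,\dots,F^d_\bullet,w)\mapsto \cF_\bullet$ by checking the defining condition of $\cF l_{m\wr d}$: the formula forces $\cF_{im}=V_{w(1)}\oplus\cdots\oplus V_{w(i)}$ (taking $j\to m$ gives ${}^{w(i+1)}F^{w(i+1)}_m=V_{w(i+1)}$), and each consecutive quotient is one-dimensional because the $F^j_\bullet$ are complete flags. To invert the map, start from $\cF_\bullet\in\cF l_{m\wr d}$; the permutation $w$ is recovered uniquely from the sequence of subspaces $V_{w(1)},V_{w(1)}\oplus V_{w(2)},\dots$ (uniqueness because the $V_j$ are fixed coordinate subspaces and any sum $\bigoplus V_{w(j)}$ determines its summands as intersections with the $V_j$), and then each $F^{w(i+1)}_\bullet$ is recovered by quotienting the length-$m$ piece of the flag from $\cF_{im}$ to $\cF_{(i+1)m}$ and identifying ${}^{w(i+1)}F^{w(i+1)}_\bullet$ with its image in $V_1$ via the coordinate shift. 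This gives a two-sided inverse.

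For part (b), I would first check well-definedness on cosets. If $(g_i)_iwB_m^d = (g'_i)_iw'B_m^d$, then using the relation $w(b_i)_i = (b_{w^{-1}(i)})_iw$ from the semidirect product to slide elements of $B_m^d$ past $w$, one obtains $w=w'$ and $g_i^{-1}g'_i\in B_m$, hence $g_iF^{std}_\bullet = g'_iF^{std}_\bullet$. Surjectivity follows from transitivity of $\GL_m$ on $\cF l_m$: any tuple $(F^1_\bullet,\dots,F^d_\bullet,w)$ can be written with $F^i_\bullet = g_iF^{std}_\bullet$. Injectivity is the converse of well-definedness, again using that the stabilizer of $F^{std}_\bullet$ in $\GL_m$ is precisely $B_m$.

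For part (c), I would compute both sides explicitly. On the one hand, left multiplication on $G_{m\wr d}/B_m^d$ gives
\[
(g_i)_iw\cdot (h_i)_i\sigma B_m^d = (g_ih_{w^{-1}(i)})_i (w\sigma) B_m^d,
\]
which maps under (b) to $(g_iF^{w^{-1}(i)}_\bullet)_i$ together with $w\sigma$, where $F^j_\bullet = h_jF^{std}_\bullet$; this matches \eqref{GactionFL}. On the other hand, under the identification of Proposition~\ref{prop:GLmwrd}, the element $(g_i)_iw$ corresponds to the block matrix $\diag(g_1,\dots,g_d)\Theta_w$, which sends $V_j$ to $V_{w(j)}$ (via the coordinate shift followed by $g_{w(j)}$). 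Applying this to $\cF_\bullet = \cF_\bullet(F^1_\bullet,\dots,F^d_\bullet,\sigma)$ transports the sum $V_{\sigma(1)}\oplus\cdots\oplus V_{\sigma(i)}$ to $V_{w\sigma(1)}\oplus\cdots\oplus V_{w\sigma(i)}$ and sends the incomplete piece ${}^{\sigma(i+1)}F^{\sigma(i+1)}_k$ into $V_{w\sigma(i+1)}$ as ${}^{w\sigma(i+1)}(g_{w\sigma(i+1)}F^{\sigma(i+1)}_k)$. Reindexing with $k = w\sigma(j)$ shows the new flag in the $k$-th block is $g_kF^{w^{-1}(k)}_\bullet$, giving precisely the formula \eqref{GactionFL}.

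The main bookkeeping obstacle will be keeping track of which permutation acts on which index in part (c); in particular, making sure that the rule $w(b_i)_i=(b_{w^{-1}(i)})_iw$ used for cosets is consistent with how the block permutation matrix $\Theta_w$ reshuffles the coordinate blocks $V_j$. Once this dictionary is fixed, the compatibility is a direct substitution.
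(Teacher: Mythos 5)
Your proposal is correct and follows essentially the same route as the paper: parts (a) and (b) are the direct verifications the paper leaves to the reader, and part (c) is the same explicit compatibility check, which the paper merely organizes differently by verifying the actions of $\Theta_w$ and $\diag(g_1,\dots,g_d)$ separately after factoring the flag through the standard one. Your bookkeeping of how $\Theta_w$ permutes the blocks (namely $\Theta_w V_j = V_{w(j)}$) matches the convention of Proposition~\ref{prop:GLmwrd}, so the substitution goes through as you describe.
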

\proof
Part (a) follows from a direct verification. Part (b) follows from Part (a). 
For Part (c), it suffices to show that  
\begin{align}
\Theta_w \cF_{\bullet}(F^1_\bullet, \dots, F^d_\bullet, \sigma) &= \cF_{\bullet}(F^{w^{-1}(1)}_\bullet, \dots, F^{w^{-1}(d)}_\bullet, w\sigma) \label{theta}, \\
\diag (g_1, \dots, g_d)  \cF_{\bullet}(F_{\bullet}^1,...,F_{\bullet}^d, \sigma) &=\cF_{\bullet}( g_{1} F_{\bullet}^1,..., g_{d} F_{\bullet}^d, \sigma). \label{diag}
\end{align}
Write $(F^1_{\bullet},...,F^d_{\bullet},\sigma)=(h_i)_i\sigma(\Fstd_{\bullet},...,\Fstd_{\bullet},id)$ for some $(h_i)_i\sigma \in G_{m \wr d}$. Then, a standard calculation shows that 
	$\diag(h_1,\dots,h_d)\Theta_\sigma \cF_{\bullet}(\Fstd_{\bullet},...,\Fstd_{\bullet},id)=\cF_{\bullet}(F^1_{\bullet},...,F^d_{\bullet},\sigma)$. 
Thus, \eqref{theta} is given by
\eq
\begin{split}
	\Theta_w \cF_{\bullet}(F^1_\bullet, \dots, F^d_\bullet, \sigma) &=\Theta_w\diag(h_1,\dots,h_d)\Theta_\sigma \cF_{\bullet}(\Fstd_{\bullet},...,\Fstd_{\bullet},id) \\
	&=\diag(h_{w^{-1}(1)},\dots,h_{w^{-1}(d)})\Theta_w\Theta_\sigma \cF_{\bullet}(\Fstd_{\bullet},...,\Fstd_{\bullet},id) \\
	&=\diag(h_{w^{-1}(1)},\dots,h_{w^{-1}(d)}) \Theta_{w\sigma} \cF_{\bullet}(\Fstd_{\bullet},...,\Fstd_{\bullet},id) \\
	&=\cF_{\bullet}(F^{w^{-1}(1)}_\bullet, \dots, F^{w^{-1}(d)}_\bullet, w\sigma).
\end{split}
\endeq 
\eqref{diag} follows from a similar verification.
\endproof

Using Proposition~\ref{compatabilitywrfl}(a), we define a map $\varphi^{\tau}_i$, for each $1\leq i \leq d$ and $\tau \in \Sigma_d$:
\eq\label{def:phii}
\varphi^{\tau}_i: \cF l_m \to \cF l_{m \wr d},
\quad
F_\bullet \mapsto \cF_\bullet(F^1_\bullet, \dots, F^d_\bullet, \tau)
\quad 
\textup{where}
\quad
F^j_\bullet = 
\begin{cases}
F_\bullet &\tif j=i;
\\
\Fstd_\bullet &\textup{otherwise.}
\end{cases}
\endeq

\subsection{Geometric Bruhat decomposition}
Next, we give a geometric proof of the Bruhat decomposition of the wreath flag variety $\cF l_{m \wr d}$ based on the \red{Bia\l ynicki-Birula} decomposition.

Let $X$ be a smooth complex projective variety with an algebraic $\CC^*$-action.
Assume that the set $X^{\CC^*}$ of $\CC^*$-fixed points of $X$ is finite. 
For $w\in X^{\CC^*}$, denote by $X_w = \{ x \in X \ | \ \lim_{z \rightarrow 0} z \cdot x =w \}$ the attracting set.
Since $\CC^*$ fixes $w$, there is a natural $\CC^*$-action on the tangent space $T_{w}X$.
Set $T^+_{w}X:=\bigoplus_{n \in \ZZ_{>0}} T_{w}X(n)$, where  $T_{w}X(n) \coloneqq \{x \in T_{w}X \ | \ z \cdot x = z^n x \ \forall z \in \CC^* \}$. 


\begin{prop}[\red{Bia\l ynicki-Birula} decomposition \red{\cite{BB73}}] \label{bbdecomp}
Let $W = X^{\CC^*}$ be the (finite) set of $\CC^*$-fixed points of a smooth complex projective variety $X$. Then,
\begin{enua}
	\item The attracting sets form a decomposition $X = \bigsqcup_{w \in W}X_w$ into smooth locally closed subvarieties;
	\item Each attracting set $X_w$ is isomorphic to  $T_{w}X_w = T^+_{w}X$ as algebraic varieties. The isomorphism commutes with the $\CC^*$-action.
\end{enua}
\end{prop}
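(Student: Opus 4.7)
The plan is to follow Bialynicki-Birula's original argument, reducing the statement to a local calculation near each fixed point via an equivariant linearization (Sumihiro's theorem).

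First, I would show that for every $x\in X$ the limit $\lim_{z\to 0}z\cdot x$ exists and lies in $W$, so that the attracting sets $X_w$ set-theoretically partition $X$. Since $X$ is projective hence proper, the orbit map $\CC^*\to X$, $z\mapsto z\cdot x$, extends uniquely to a morphism $\PP^1\to X$ by the valuative criterion of properness; continuity forces its value at $0\in\PP^1$ to be a $\CC^*$-fixed point, and this value is precisely $\lim_{z\to 0}z\cdot x$. This produces a set-theoretic map $\pi:X\to W$ with $\pi\inv(w)=X_w$, whence $X=\bigsqcup_{w\in W}X_w$.

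Next, I would establish a local model at each $w\in W$. By Sumihiro's theorem on equivariant embeddings, $w$ admits a $\CC^*$-invariant affine open neighborhood $U\subseteq X$ together with a $\CC^*$-equivariant closed embedding $\iota:U\hookrightarrow T_wX$ that sends $w$ to $0$ and induces the identity on tangent spaces. Decompose $T_wX=T^+_wX\oplus T^0_wX\oplus T^-_wX$ into the subspaces of positive, zero, and negative $\CC^*$-weights. In the linear model, the attracting set of the origin is elementary: a weight-$n$ vector $v$ satisfies $z\cdot v=z^nv\to 0$ as $z\to 0$ iff $n>0$, so the attracting set of $0\in T_wX$ equals $T^+_wX$. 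Using $\CC^*$-equivariance of $\iota$ and the $\CC^*$-stability of $U$, one concludes $\iota(X_w\cap U)=\iota(U)\cap T^+_wX$.

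The remaining task is to show that this intersection is smooth and $\CC^*$-equivariantly isomorphic to $T^+_wX$. Since $\iota$ is the identity on tangent spaces at $w$, the composition $\iota(U)\hookrightarrow T_wX\twoheadrightarrow T^0_wX\oplus T^-_wX$ is smooth at $0$; its fiber over $0$ is open in $T^+_wX$ by a dimension count, giving an equivariant local isomorphism $X_w\cap U\cong T^+_wX$. An equivariant inverse on the whole attracting set $X_w$ is provided by $v\mapsto\lim_{z\to 0}z\cdot\iota\inv(v)$, whose existence is guaranteed by the first step. Gluing the local pictures over the finite set $W$ yields the smooth locally closed decomposition in (a) and the $\CC^*$-equivariant isomorphism in (b).

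The main obstacle is the equivariant linearization step: Sumihiro's theorem on the existence of $\CC^*$-invariant affine neighborhoods and $\CC^*$-equivariant embeddings into $T_wX$ is the nontrivial ingredient, and everything else is formal. In practice, one can simply cite \cite{BB73}, which is already invoked for this purpose in the statement.
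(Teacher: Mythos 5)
The paper does not prove this proposition at all: it is quoted as a classical theorem of Bialynicki-Birula and used as a black box, so your closing remark (``one can simply cite \cite{BB73}'') is exactly what the authors do. Your sketch follows the standard route of the original proof (limits exist by properness, equivariant linearization at each fixed point, local analysis of the weight decomposition), and the first step and the overall strategy are fine. However, two steps are not correct as written.

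First, there is in general no $\CC^*$-equivariant \emph{closed} embedding $\iota\colon U\hookrightarrow T_wX$ inducing the identity on tangent spaces: such a map would realize the $(\dim X)$-dimensional affine variety $U$ as a closed subvariety of the irreducible $(\dim X)$-dimensional variety $T_wX$, forcing $\iota(U)=T_wX$ and hence $U\cong T_wX$; this fails for a general smooth projective $X$ (it does hold for the wreath flag varieties used later in the paper, whose attracting cells are affine spaces, but not in the generality of the proposition). What Sumihiro plus linearization actually provides is either a $\CC^*$-equivariant closed embedding of $U$ into some finite-dimensional $\CC^*$-module $V$ containing $T_wX$ as a direct summand, or a $\CC^*$-equivariant \'etale map $U\to T_wX$; descending the plus-decomposition from $V$ (or along the \'etale map) to $T^+_wX$ is precisely the nontrivial content of \cite{BB73} that your sketch elides. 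Second, the proposed inverse $v\mapsto\lim_{z\to 0}z\cdot\iota\inv(v)$ is the constant map to $w$ and inverts nothing. The correct globalization is: $\iota(U)\cap T^+_wX$ is closed in $T^+_wX$, is $\CC^*$-invariant, and contains a neighborhood of the origin by your local smoothness argument; since every point of $T^+_wX$ flows into that neighborhood as $z\to 0$, invariance forces $\iota(U)\cap T^+_wX=T^+_wX$, and the inverse of the isomorphism in (b) is simply $\iota\inv$ restricted to $T^+_wX$. (One should also note explicitly that $X_w\subseteq U$, since $w\in U$, $U$ is open and invariant, and $z\cdot x$ eventually enters $U$; this is what makes the local picture capture all of $X_w$.)
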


The first step is to choose a suitable $\CC^*$-action on our wreath flag variety $X=\cF l_{m \wr d} \equiv G_{m \wr d}/B^d_m$. 
Fix a maximal torus $T^d_m \subseteq B^d_m \subseteq \GL_m^d$, and let it act on $X$  by 
\eq \label{Tmdaction}
(t_1,...,t_d) \cdot (g_1,...,g_d,w)B^d_m= (t_1g_1,...,t_dg_d,w)B^d_m.
\endeq 
The next step is to analyze the set  of $T^d_m$-fixed points. 

\begin{lemma} \label{Tfixedpoint}
There is a bijection $\cF l_{m\wr d}^{T_m^d} \to \Sigma_m \wr \Sigma_d$.
\end{lemma}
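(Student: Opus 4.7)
The plan is to reduce everything to the classical $T_m$-fixed points of $\cF l_m$ via the identification $\cF l_{m \wr d} \cong \cF l_m \wr \Sigma_d$ supplied by Proposition~\ref{compatabilitywrfl}(a). Under this identification, a point of $\cF l_{m \wr d}$ is a tuple $(F^1_\bullet, \dots, F^d_\bullet, \sigma)$ with each $F^i_\bullet \in \cF l_m$ and $\sigma \in \Sigma_d$, and I would first translate the $T_m^d$-action \eqref{Tmdaction} through this identification. Using Proposition~\ref{compatabilitywrfl}(b)--(c) with the element $(t_1,\dots,t_d)\cdot 1_{\Sigma_d} \in G_{m \wr d}$, the formula \eqref{GactionFL} specializes to
\[
(t_1,\dots,t_d) \cdot (F^1_\bullet, \dots, F^d_\bullet, \sigma) = (t_1 F^1_\bullet, \dots, t_d F^d_\bullet, \sigma),
\]
so that the $\Sigma_d$-coordinate is untouched and the action factors through the natural coordinatewise action of $T_m^d$ on the product $(\cF l_m)^d$.

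Next I would deduce that $(F^1_\bullet, \dots, F^d_\bullet, \sigma)$ is $T_m^d$-fixed if and only if each $F^i_\bullet$ is $T_m$-fixed in $\cF l_m$, with $\sigma$ arbitrary. This gives the set-theoretic identification
\[
\cF l_{m \wr d}^{T_m^d} \;=\; (\cF l_m^{T_m})^d \times \Sigma_d.
\]
Then I would invoke the classical type-A fact that $\cF l_m^{T_m} \leftrightarrow \Sigma_m$ via $\sigma \mapsto \sigma \cdot \Fstd_\bullet$ (the permutation flags exhaust the $T_m$-fixed points of $\cF l_m$). Combining the two bijections yields
\[
\cF l_{m \wr d}^{T_m^d} \;\longleftrightarrow\; \Sigma_m^d \times \Sigma_d \;=\; \Sigma_m \wr \Sigma_d,
\]
with explicit map $((\sigma_1,\dots,\sigma_d), w) \mapsto (\sigma_1 \Fstd_\bullet, \dots, \sigma_d \Fstd_\bullet, w)$.

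There is no real obstacle here beyond bookkeeping; the only thing that must be checked carefully is that the $T_m^d$-action in \eqref{Tmdaction}, written on cosets $(g_i)_i w B_m^d$, agrees under the identification of Proposition~\ref{compatabilitywrfl}(b) with the coordinatewise action described above (i.e.\ that no twist by $w$ sneaks in on the torus coordinates). This is immediate because the torus elements $(t_1,\dots,t_d)$ lie in the $\Sigma_d$-trivial part of $G_{m \wr d}$, so \eqref{GactionFL} gives the clean formula with $\sigma$ fixed. Once this is in place the bijection is a direct consequence of the classical $\GL_m$-case applied coordinatewise.
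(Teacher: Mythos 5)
Your proposal is correct and follows essentially the same route as the paper: both reduce to the classical type-A statement coordinatewise after observing that the torus action leaves the $\Sigma_d$-component untouched, so that $\cF l_{m\wr d}^{T_m^d} \cong (\cF l_m^{T_m})^d \times \Sigma_d$. The only cosmetic difference is that the paper phrases the classical input as the bijection between Borel subgroups of $\GL_m$ containing $T_m$ and $\Sigma_m$ (deriving the condition $T_m \subseteq g_iB_mg_i^{-1}$ from the coset description), whereas you phrase it as the permutation flags exhausting $\cF l_m^{T_m}$ — these are the same fact.
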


\begin{proof}
Let $(g_i)_iwB^d_m$ be a $T^d_m$-fixed point in $\cF l_{m\wr d} \equiv G_{m \wr d}/B^d_m$.
Then, for any $(t_i)_i \in T^d_m$, 
$(t_i)_i  (g_i)_iwB^d_m = (g_i)_iwB^d_m$, and hence
\begin{equation} \label{tfixed} 
B^d_m = ((g_i)_iw)^{-1}  (t_i)_i  (g_i)_iwB^d_m 
= (g^{-1}_{w(i)})_iw^{-1}  (t_ig_i)_i w B^d_m =(g^{-1}_{w(i)}t_{w(i)}g_{w(i)})_i B^d_m .
\end{equation}   
Thus,  for each $1 \leq i \leq d$, we have $g^{-1}_it_ig_iB_m=B_m$, or equivalently,  
	$T_m \subseteq g_iB_mg^{-1}_i$.	
Therefore, there is a bijection
\eq
\cF l_{m\wr d}^{T_m^d} \to \{\textup{Borel subgroups of }\GL_m^d \textup{ containing }T_m^d \}\times \Sigma_d,
\quad
(g_i)_iwB^d_m \mapsto \textstyle{\left(\prod_{i=1}^mg_iB_mg^{-1}_i, w \right)}.
\endeq

Since $g_iB_mg_i^{-1} \subseteq \GL_m$ is a Borel subgroup which contains $T_m$, the set of $T^d_m$-fixed points of $\cF l_{m\wr d}$ is the same as the set of Borel subgroups of $\GL_m^d$ containing $T^d_m$ times the extra copy $\Sigma_d$.
The lemma concludes from the fact that there is a bijection between the set of all Borel subgroups of $\GL_m$ containing $T_m$ and the symmetric group $\Sigma_m$. 

\end{proof}


\begin{thm}[Geometric Bruhat decomposition]\label{thm:GBD}
Let $X = \cF l_{m \wr d} \equiv G_{m \wr d}/B^d_m$ be the wreath flag variety. Then,
\begin{enua}
\item
There is a $\CC^*$-action on  $X$ such that the fixed-point set $X^{\CC^*}$  is in bijection with $\Sigma_m \wr \Sigma_d$. 
\item $G_{m \wr d}/B^d_m = \bigsqcup_{\sigma \in \Sigma_m \wr \Sigma_d} X_\sigma$, where $X_\sigma$ is precisely the cell $B_m^d \sigma B^d_m/B_m^d$. 
\end{enua}
\end{thm}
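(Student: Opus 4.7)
The plan is to apply the Bialynicki-Birula decomposition (Proposition~\ref{bbdecomp}) to $X = \cF l_{m\wr d}$ with a suitable $\CC^*$-action, and then identify the resulting attracting cells with the Bruhat cells $B_m^d \sigma B_m^d/B_m^d$ constructed in Corollary~\ref{gtsbd}.

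For part (a), I would obtain the $\CC^*$-action by precomposing the $T_m^d$-action of \eqref{Tmdaction} with a generic cocharacter $\lambda\colon \CC^* \to T_m^d$. Concretely, take $\lambda(z) = \diag(z^{a_1},\dots,z^{a_{md}})$ with strictly decreasing exponents $a_1 > a_2 > \dots > a_{md}$, chosen so that the centralizer of $\lambda(\CC^*)$ inside $\GL_{md}$ is precisely the diagonal torus $T_m^d$. This forces $X^{\CC^*} = X^{T_m^d}$, and Lemma~\ref{Tfixedpoint} then supplies the required bijection $X^{\CC^*} \cong \Sigma_m \wr \Sigma_d$. Since $X$ is smooth projective (as a closed subvariety of $\cF l_{md}$ preserved by a projective group action) and the fixed-point set is finite, the hypotheses of Proposition~\ref{bbdecomp} are in place.

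For part (b), Proposition~\ref{bbdecomp}(a) directly yields a decomposition $X = \bigsqcup_{\sigma \in \Sigma_m\wr\Sigma_d} X_\sigma$ into locally closed attracting cells. To identify $X_\sigma$ with $B_m^d \sigma B_m^d/B_m^d$, I would reduce to the classical type-A case as follows: since $\lambda(\CC^*) \subseteq T_m^d$ lies in the connected subgroup $\GL_m^d$, the $\CC^*$-action preserves each of the $d!$ connected components of $X$, one per $w \in \Sigma_d$. By Proposition~\ref{compatabilitywrfl}(a), the component indexed by $w$ is $B_m^d$-equivariantly isomorphic to $\cF l_m^d$, on which $\lambda$ restricts to the standard generic torus action with strictly decreasing weights on each factor. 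The classical Bialynicki-Birula-equals-Bruhat-decomposition result for type-A flag varieties then identifies the attracting cell at the fixed point $(\sigma_1\Fstd_\bullet,\dots,\sigma_d\Fstd_\bullet,w)$ with the product of Schubert cells $\prod_{i=1}^d B_m \sigma_i B_m/B_m$.

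Finally, a routine computation using the action formula \eqref{GactionFL} together with the factorization $\diag(g_1,\dots,g_d)\Theta_w$ from Proposition~\ref{prop:GLmwrd} shows that, for $\sigma = (\sigma_i)_i w$, the cell $B_m^d \sigma B_m^d/B_m^d$ corresponds under Proposition~\ref{compatabilitywrfl}(b) exactly to this product of Schubert cells sitting inside the $w$-component. The main obstacle I anticipate is not a deep one but a bookkeeping subtlety: verifying that the connected components of $X$ are genuinely $\CC^*$-stable so that the BB analysis decouples into $d$ independent type-A problems, and checking that the permutation matrix $\Theta_w$ introduces no cross-terms when translating between attracting sets and Bruhat cells under the identifications of Propositions~\ref{prop:GLmwrd} and~\ref{compatabilitywrfl}.
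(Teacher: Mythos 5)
Your proposal is correct and follows essentially the same route as the paper: a generic cocharacter into the diagonal torus so that $X^{\CC^*}=X^{T_m^d}$, Lemma~\ref{Tfixedpoint} for the fixed-point count, and then a reduction of the Bialynicki-Birula cells to $d!$ copies of the classical type-A picture on $\cF l_m^d$ (the paper phrases this last step via Proposition~\ref{bbdecomp}(b), computing $T^+_\sigma X \cong \prod_i T^+_{g_i}(\GL_m/B_m)\times\{w\}$ and citing \cite[Corollary 3.1.12]{CG97}, which is the same content as your componentwise appeal to BB-equals-Bruhat). No gaps.
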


\proof
For (a), \redtwo{we fix distinct integers $n_1,\dots,n_{md}$. Then, we choose a one-parameter subgroup $\CC^* \simeq \{\diag(a^{n_1}, \dots, a^{n_{md}}) \in T_{md}~|~  a \in \CC^* \}$} as in \cite[Lemma 3.1.10]{CG97} so that $\cF l^{\CC^*}_{md} = \cF l^{T_{md}}_{md}$.
Taking intersections with $\cF l _{m\wr d}$, we have $\cF l^{\CC^*}_{m \wr d} = \cF l_{m \wr d} \cap \cF l^{\CC^*}_{md} = \cF l_{m \wr d} \cap \cF l^{T_{md}}_{md} = \cF l^{T_{md}}_{m \wr d}$. 
(a) then follows from Lemma \ref{Tfixedpoint}, and thus the assumptions for the \red{Bia\l ynicki-Birula} decomposition (Proposition~ \ref{bbdecomp}) hold. 

Combining (a) and Proposition \ref{bbdecomp}, we obtain that $X$ decomposes into locally closed subvarieties of the form $X_\sigma \cong T^{+}_{\sigma} \cF l_{m \wr d}$ ($\sigma \in \Sigma_m \wr \Sigma_d$). 
Write $\sigma = (g_i)_i w$ for some $g_i \in \Sigma_m, w \in \Sigma_d$. Then, 
\eq
	T_{\sigma}X \cong T_{(g_1,...,g_d)}\GL^d_{m}/B^d_m \times \{w\}={\textstyle\prod_{i=1}^{d}} T_{g_i}(\GL_{m}/B_m) \times \{w\}.
\endeq 
By the proof of \cite[Corollary 3.1.12]{CG97}, $T^+_{g_i}(\GL_{m}/B_m) \cong B_m g_iB_m/B_m$ for all $i$. 
Thus, 
\eq
T^+_{\sigma}\cF l_{m \wr d} 
	\cong {\textstyle\prod_{i=1}^{d}} T^+_{g_i}(\GL_{m}/B_m) \times \{w\}
	\cong  {\textstyle\prod_{i=1}^{d}} (B_m g_iB_m/B_m) \times \{w\} 
	\cong B^d_m \sigma B^d_m/B^d_m.
\endeq
\endproof

\section{Wreath Steinberg varieties}
In this section, we introduce the wreath Steinberg variety $Z_{m \wr d}$, in order to prepare a realization of the group algebra of $\Sigma_m \wr \Sigma_d$ via the top Borel-Moore homology of $Z_{m \wr d}$.
Thus, we analyze the irreducible components of $Z_{m \wr d}$ in terms of the conormal bundles.
We remark that the argument presented here is not a consequence of \cite[Proposition 3.3.4]{CG97} since the obvious wreath variants of Borel subalgebras are not in bijection with the wreath flags.

\subsection{Nilpotent Orbits}
As observed in Remark \ref{rmk:LieA}, we consider nilpotent cones for the Lie algebra $\gl_{m}^d $, instead of a wreath version. 
%
Let $\cN_{m}$ be the variety of all nilpotent elements in $\gl_{m}$.
The $\GL_m$-orbits in $\cN_m$ are parametrized by the set  $\Pi_m$ of partitions $\nu$ of $m\in \ZZ_{\geq 0}$ (write $\nu \vdash m$ for short).

Let $G_{m \wr d}$ act on $\cN^{d}_{m}$ by, for $\gamma = (g_i)_i w \in G_{m \wr d}$ and $(\redtwo{x}_i)_i \in \cN_m^d$:
\eq\label{nilact}
\gamma \cdot (\redtwo{x}_i)_i := \gamma (\redtwo{x}_i)_i \gamma\inv  =(g_i\redtwo{x}_{w^{-1}(i)}g^{-1}_i)_i.
\endeq
Note that \eqref{nilact} is compatible with the $\GL_{md}$-action on $\cN_{md}$ via the embedding $\cN_m^d \hookrightarrow \cN_{md}, (\redtwo{x}_i)_i \mapsto \diag(\redtwo{x}_1, \dots, \redtwo{x}_d)$.
Let $\Sigma_d$ act on $\Pi_m^d$ from the right by place permutations.
Thus, the $G_{m\wr d}$-orbits in $\cN_m^d$ are parametrized by the $\Sigma_d$-orbits in the multipartitions $(\lambda_1, \dots, \lambda_d)$ where $\lambda_i \vdash m$ for all $i$, i.e.,
\eq \label{orbitdecomp}
G_{m\wr d} \backslash \cN_{m}^{d} \overset{1:1}{\longleftrightarrow} \{ (\lambda_1,...,\lambda_{d}) \cdot \Sigma_d ~|~ \lambda_i \vdash m \textup{ for all }i\}.
\endeq

%
%

%
%

\rmk[\red{Nilpotency on wreath flags}]
Under the identification $\cF l_{m \wr d} \equiv \cF l_{m} \wr \Sigma_d$, any  $\cF \in \cF l_{m \wr d} \subseteq \cF l_{md}$ is identified with some element of the form $(F^i_{\bullet})_i w \in \cF l_{m} \wr \Sigma_d$. 
\red{It is standard to check that an element $(\redtwo{x}_i)_i \in \cN_m^d$ is nilpotent on a flag $\cF \in \cF l_{md}$ if and only if $\redtwo{x}_iF^i_j \subseteq F^i_{j-1}$ for all $i,j$.}

\endrmk

\subsection{A Variant of the Springer Resolution}

Denote the Springer resolution of type A by $\mu_m : \widetilde{\cN}_m \to \cN_m$, $(\redtwo{x}, F_\bullet) \mapsto \redtwo{x}$, where
\eq
	 \widetilde{\cN}_m = T^*\cF l_m \equiv \{(\redtwo{x},F_{\bullet}) \in \cN_{m} \times \cF l_m \ | \ \redtwo{x}(F_{i}) \subseteq F_{i-1}\textup{ for all }i \}.
\endeq
Instead of the usual Springer resolution $\mu_m^d: \widetilde{\cN}_m^d \to \cN_m^d$,
we consider the following Springer resolution $\mu_{m \wr d}:\widetilde{\cN}_{m \wr d} \rightarrow \cN_m^d, \ (\redtwo{x}, \cF_\bullet) \mapsto \redtwo{x}$,
where 
\eq\label{wreath-springer-resolution}
\widetilde{\cN}_{m \wr d} := T^*\cF l_{m \wr d} 
\equiv \{ ((\redtwo{x}_i)_i, (F^i_{\bullet})_iw) \in \cN_m^d \times \cF l_{m \wr d}  \ | \ \redtwo{x}_iF^i_{j} \subseteq F^i_{j-1} \ \text{for \ all} \ i, j, \ \text{and} \ \red{w \in \Sigma_d} \}.     
\endeq
%
Let $\fn_m^d \subseteq \fb_m^d$ be the nilpotent radical. 
Recall $G_{m \wr d}$ acts on $\cN_{m}^d$ and $\cF l_{m \wr d}$  by \eqref{nilact} and \eqref{GactionFL}, respectively, 
and hence $G_{m \wr d}$ acts on $\widetilde{\cN}_{m \wr d} := T^*\cF l_{m \wr d}$. 
Then, we have the following analog of \cite[Corollary 3.1.33]{CG97} regarding $G_{m \wr d}$-equivariant vector bundles.

{\prop \label{nilbuniso}
As  $G_{m \wr d}$-equivariant vector bundles, $G_{m \wr d} \times_{B_m^d} \fn_m^d \cong T^*\cF l_{m \wr d}$ via
\eq 
(\gamma, \red{x} ) \mapsto   ( \gamma \cdot \red{x}, \  \gamma (F^{std}_{\bullet},...,F^{std}_{\bullet},1_{\Sigma_d}) ),
\quad \gamma \in G_{m \wr d}, \red{x} \in \fn_m^d.
\endeq
\endprop}

\begin{proof}
For injectivity, suppose that 
$( \gamma_1 \cdot \red{x_1}, \  \gamma_1 (\Fstd_{\bullet},...,\Fstd_{\bullet},1_{\Sigma_d}) )=( \gamma_2 \cdot \red{x_2}, \  \gamma_2 (\Fstd_{\bullet},...,\Fstd_{\bullet},1_{\Sigma_d}))$
for some $(\gamma_i,\red{x_i})\in G_{m \wr d} \times_{B_m^d} \fn_m^d$.
Then, 
$\gamma_1^{-1}\gamma_2 \in \Stab_{G_{m \wr d}}((\Fstd_{\bullet},...,\Fstd_{\bullet},1_{\Sigma_d}))=B^d_m$,
and thus $\gamma_2=\gamma_{1}b$ for some $b \in B^d_m$. 
Moreover, 
\eq
\gamma_2 \cdot \red{x_2} = \gamma_2\red{x_2}\gamma_2^{-1}=\gamma_1b\red{x_2}b^{-1}\gamma_1^{-1}=\gamma_1\red{x_1}\gamma_1^{-1}=\gamma_1 \cdot \red{x_1},
\endeq
which implies that $b\red{x_2}b^{-1}=\red{x_1}$. Therefore, $(\gamma_1,\red{x_1})=(\gamma_2b^{-1},b\red{x_2}b^{-1})=(\gamma_2,\red{x_2})$.
 	
For surjectivity, assume that $((\redtwo{x}_i)_i, (F^i_{\bullet})_iw)  \in T^*\cF l_{m \wr d}$.
Then, there exists $\gamma=(g_i)_iw \in G_{m \wr d}$ such that $\gamma (F^{std}_{\bullet},...,F^{std}_{\bullet},1_{\Sigma_d})=(F^i_{\bullet})_iw$. 
The condition $\redtwo{x}_iF^i_{j} \subseteq F^i_{j-1}$ implies that 
$g_i^{-1}\redtwo{x}_ig_i \in \fn_m$ for all $i$. 
Thus, $((g_i)_iw ,(g^{-1}_{w(i)}\redtwo{x}_{w(i)}g_{w(i)})_i)$ lies in the inverse image, and we are done.
\end{proof}
We identify $\gl_m^d \cong \gl_m^{d,*}$ by an invariant bilinear form on $\gl_m^d$, 
and let $(\fb^d_m)^{\perp} \subseteq \gl_m^{d,*}$ be the image of $\fn^d_m$ under the identification. 
Then, we obtain the following analog of \cite[Proposition 1.4.9]{CG97}.

{\cor \label{gmdequivariantiso}
There is a $G_{m \wr d}$-equivariant isomorphism
$T^*\cF l_{m \wr d} 
\cong  G_{m \wr d} \times_{B_m^d} (\fb^d_m)^{\perp}$.
\endcor}

\subsection{Wreath Steinberg Varieties}

Using our unconventional Springer resolution $\widetilde{\cN}_{m \wr d} := T^*\cF l_{m\wr d}$, we introduce the following Steinberg varieties whose irreducible components are conormal bundles indexed by $\Sigma_m \wr \Sigma_d$. 

\Def \label{Steinberg}
Define the \textit{wreath Steinberg variety} by $Z_{m \wr d} := \widetilde{\cN}_{m \wr d}  \times_{\cN_m^d}\widetilde{\cN}_{m \wr d} $, i.e, 
\eq
Z_{m \wr d} =\{ (  ( \redtwo{x}, \cF_{\bullet}) , ( \redtwo{x}', \cF'_{\bullet})  ) \in \widetilde{\cN}_{m \wr d}  \times \widetilde{\cN}_{m \wr d}  \ | \ \redtwo{x} = \redtwo{x}' \}.
\endeq
\endDef
\red{From the Bruhat decomposition \eqref{bruhat}, we know that the $G_{m \wr d}$-orbits of $G_{m \wr d}/B^d_{m} \times G_{m \wr d}/B^d_{m}$ are parametrized by elements in the Weyl group $\Sigma_{m} \wr \Sigma_{d}$, more precisely 
\eq
(G_{m \wr d}/B^d_{m}) \times (G_{m \wr d}/B^d_{m})= \bigsqcup_{w \in \Sigma_{m} \wr \Sigma_{d}}\cO'(w),
\quad \cO'(w):=  G_{m \wr d} \cdot ( B^d_m, w B^d_m).
\endeq Moreover, we also have the identification $G_{m \wr d}/B^d_m \simeq \cF l_{m \wr d}$ from Proposition \ref{compatabilitywrfl} (b). Thus, there is a $G_{m \wr d}$-orbits decomposition
\eq
\cF l_{m \wr d} \times \cF l_{m \wr d}= \bigsqcup_{w \in \Sigma_{m} \wr \Sigma_{d}}\cO(w), \quad
\cO(w):=  G_{m \wr d} \cdot ( (\Fstd_\bullet)_i, w (\Fstd_\bullet)_i).
\endeq
}

Thus, for $w\in \Sigma_m \wr \Sigma_d$, there is a short exact sequence via the identification (Proposition~\ref{compatabilitywrfl}(b)):
\eq\label{def:Yw}
0 \rightarrow Y_{w} \rightarrow T^*(\cF l_{m \wr d} \times \cF l_{m \wr d}) \rightarrow T^*\cO(w) \rightarrow 0,
\endeq
 where $Y_w := T^*_{\cO(w)}(\cF l_{m \wr d} \times \cF l_{m \wr d})$ is the cornormal bundle. 
On the other hand, there is a natural projection $\pi:Z_{m \wr d} \rightarrow G_{m \wr d}/B^d_m \times G_{m \wr d}/B^d_m$ given by
\eq
Z_{m \wr d} \hookrightarrow T^*\cF l_{m \wr d} \times T^*\cF l_{m \wr d} \xrightarrow{\pi' \times \pi'} (G_{m \wr d}/B^d_m) \times (G_{m \wr d}/B^d_m),
\endeq
where $\pi' : T^*\cF l_{m \wr d} \to G_{m\wr d}/B_m^d$ is the evident projection.
In the following, \red{we show that the conormal bundle $Y_w$ admits an alternative description.}
 
\lemma\label{Y=Z}
For $w\in \Sigma_m \wr \Sigma_d$, let $Z_{w} \coloneqq \pi^{-1}(\cO(w))$.
Then, $Z_w=Y_w$.
\endlemma

\proof
An arbitrary geometric point in the orbit $\cO(w)$ is of the form $s = (F^{(1)}, F^{(2)})$ where
\eq
F^{(j)}=(g^{(j)}_i)_i\sigma_{j}(F^{std}_{\bullet},...,F^{std}_{\bullet}), 
\quad ( j =1,2, \ g^{(j)}_i \in \GL_m, \sigma_j \in \Sigma_d)
\endeq 
such that  $((g^{(1)}_i)_i \sigma_1)^{-1} (g^{(2)}_i)_i \sigma_2 \in B^d_m w B^d_m$.
Then, {by Corollary \ref{gmdequivariantiso}}, the fiber $Y_{w,s}$ of the cornormal bundle of $\cO(w)$ at the point $s$ consists of elements of the form
$( (\redtwo{x}^{(1)}, \redtwo{x}^{(2)}), (F^{(1)}, F^{(2)}) )$, where {$\redtwo{x}^{(1)}, \redtwo{x}^{(2)} \in \gl_m^{d,*}$} and $(\redtwo{x}^{(1)}, \redtwo{x}^{(2)})$ annihilates the tangent space $T_{s}\cO(w)$. 
Since $\Stab_{G_{m \wr d}}(\Fstd_{\bullet},...,\Fstd_{\bullet}) = B^d_m$, the stabilizer of $F^{(j)}$ in $G_{m \wr d}$ is a Borel subgroup $B_j$ of $G_{m\wr d}$, i.e.,
\eq
	\Stab_{G_{m \wr d}}(F^{(j)})=(g^{(j)}_i)_i \sigma_j B^d_m ((g^{(j)}_i)_i\sigma_j)^{-1} = B_j.
\endeq
Therefore, \red{the orbit $\cO(w)= G_{m \wr d}\cdot s$ }is given by
\eq
	\cO(w) = G_{m \wr d} /(\Stab(F^{(1)}) \cap\Stab(F^{(2)})) = G_{m \wr d}/(B_1 \cap B_2).
\endeq
Write $\fb_j = \mathop{\mrm{Lie}} B_j$. Since $G_{m \wr d}/(B_1 \cap B_2)=\{g(B_1\cap B_2) \ | \ g \in G_{m \wr d} \} \cong \{ (gB_1,gB_2) \ | \ g \in G_{m \wr d} \}$,
\eq
	T_s\cO(w) \cong T_sG_{m \wr d}/(B_1 \cap B_2) \cong T_s\{ (gB_1,gB_2) \ | \ g \in G_{m \wr d} \} =\{ (\redtwo{a}\fb_1,\redtwo{a}\fb_2) \ | \ \redtwo{a} \in \gl^d_m \}.
\endeq
Now, the condition that $(\redtwo{x}^{(1)},\redtwo{x}^{(2)})$ annihilates the tangent space $T_{s}\cO(w)$ is equivalent to 
	$\langle \redtwo{x}^{(1)}, x \rangle + \langle \redtwo{x}^{(2)},x \rangle =0$ for all $x \in \gl^d_m$, 
which implies that $\redtwo{x}^{(1)}=-\redtwo{x}^{(2)}$.  Thus, we have 
\eq
Y_{w,s}=\{ (  (\redtwo{x}^{(1)}, \redtwo{x}^{(2)}), (F^{(1)}, F^{(2)}) ) \in T_s^*(\cF l_{m \wr d} \times \cF l_{m \wr d})  \ | \ \redtwo{x}^{(1)}=-\redtwo{x}^{(2)} \}.
\endeq
Finally, $Y_{w,s}$ coincides with the fiber $Z_{w,s}$ thanks to the sign isomorphism 
$T^*(\cF l_{m \wr d} \times \cF l_{m \wr d}) \xrightarrow{sign} T^*\cF l_{m \wr d} \times T^*\cF l_{m \wr d}=  \widetilde{\cN}_{m \wr d}  \times \widetilde{\cN}_{m \wr d}$, 
$(  (\redtwo{x}^{(1)}, \redtwo{x}^{(2)}), (F^{(1)}, F^{(2)}) ) \mapsto (  (\redtwo{x}^{(1)},F^{(1)}), (-\redtwo{x}^{(2)}, F^{(2)}) )$.
\endproof

Note that since $G_{m \wr d}$ is disconnected, the connected components for each orbit $\cO(w)$ are indexed by $\tau \in \Sigma_d$, and are denoted by 
\begin{equation} \label{componentoforbits} 
\cO(w)_{\tau}=\{ ( (g_i\Fstd_\bullet)_i\tau, (g_iw_{\tau^{-1}(i)}\Fstd_\bullet)_i\tau\sigma )\ | \ (g_i)_i \in \GL_m^d \}.
\end{equation}
For $w \in \Sigma_m \wr \Sigma_d$ and $\tau \in \Sigma_d$, we further define
\eq\label{eq:Ywt}
Y_{w,\tau} \coloneqq \pi^{-1}(\cO(w)_\tau)=\{(\redtwo{x},\cF_{\bullet},\cF'_{\bullet}) \in Z_{m \wr d} \ | \ (\cF_{\bullet},\cF'_{\bullet}) \in \cO(w)_\tau \}.
\endeq


\begin{prop} \label{basis} 
We have $Z_{m \wr d}=\bigsqcup_{w \in \Sigma_m \wr \Sigma_d, \tau \in \Sigma_d} Y_{w,\tau}$. 
Moreover, each irreducible component of $Z_{m \wr d}$ is the closure \red{$\overline{Y_{w,\tau}}$} for a uniquely $w \in \Sigma_m \wr \Sigma_d$ and $\tau \in \Sigma_d$.
\end{prop} 

\begin{proof}
Observe that $Z_{m \wr d} \subseteq \cN_m^d \times G_{m \wr d}/B^d_m \times G_{m \wr d}/B^d_m=\cN_m^d \times \bigsqcup_{w \in \Sigma_m \wr \Sigma_d, \tau \in \Sigma_d} \cO(w)_{\tau}$. Thus 
\eq
Z_{m \wr d}
= \bigsqcup_{w \in \Sigma_m \wr \Sigma_d, \tau \in \Sigma_d} Z_{m \wr d} \cap (\cN_m^d \times  \cO(w)_{\tau})
= \bigsqcup_{w \in \Sigma_m \wr \Sigma_d} Y_{w,\tau}. 
\endeq 
The proposition then follows from Lemma~\ref{Y=Z}.
\end{proof}
{For each $1\leq i \leq d$, recall $\varphi^{\tau}_i$ from  \eqref{def:phii}, the Steinberg variety $Z_m$ of type A can be embedded into the wreath Steinberg variety at the $(\tau,\tau)$-component in the following sense :
\eq\label{def:zetai}
\zeta^{\tau}_i :Z_m \to Z_{m \wr d},
\quad (\redtwo{x}, F_\bullet, F'_\bullet) \mapsto (\redtwo{y}, \varphi^{\tau}_i(F_\bullet), \varphi^{\tau}_{\tau^{-1}(i)}(F'_\bullet)), \ \textup{where} \ \redtwo{y}_j = \begin{cases} \redtwo{x} &\tif j = \tau^{-1}(i); \\ 0 &\textup{otherwise.} \end{cases}
\endeq
}

%


%

\section{Borel-Moore Homology of Wreath Steinberg Varieties}

\subsection{Borel-Moore Homology}
An introduction to Borel-Moore homology  can be found in \cite[\S2.6--7]{CG97}. In particular, we will use the following result:

\begin{prop}\label{prop:HBM}
Let $\pi:M\to N$ be a proper map from a smooth complex manifold onto a variety, and let $Z = M \times_N M := \{(m, m')\in M^2 ~|~ \pi(m) = \pi(m')\}$.
Denote by $\HBM(Z;K)$ the group of Borel-Moore homology  with coefficients in a field $K$ of characteristic 0,
and by $\HBMt(Z; K)$ its top degree homologies. Then,
 \begin{enua}
 \item {\cite[Corollary 2.7.41, 2.7.48]{CG97}}
The group $\HBMt(Z;K)$ has a structure of an associative unital $K$-algebra.

 \item {\cite[Lemma 2.7.49]{CG97}}
 Assume that  the set of irreducible components $\{L_w\}_{w\in W}$ of $Z$ is indexed by a finite set $W$.
 If all the components have the same dimension, then the fundamental classes $\{[L_w] \}_{w\in W}$ form a basis of the algebra $\HBMt(Z;K)$.
 \end{enua}
 \end{prop}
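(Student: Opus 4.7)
The plan here is essentially bookkeeping: both parts of the proposition are classical results from the theory of convolution algebras on Borel--Moore homology, with proofs given in detail in \cite[\S 2.7]{CG97}. Thus my ``proof'' reduces to matching our hypotheses---$M$ smooth and $\pi: M \to N$ proper onto a variety---to those of the cited statements and then quoting them.

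For completeness I would record the underlying construction for part (1). Writing $Z = M \times_N M \hookrightarrow M \times M$ and letting $p_{ij}: M \times M \times M \to M \times M$ denote the coordinate projections, the convolution product is
\[
c * c' := (p_{13})_{*}\bigl(p_{12}^{*}(c) \cap p_{23}^{*}(c')\bigr).
\]
Smoothness of $M$ provides the refined Gysin pullback along the diagonal, which gives a meaning to $\cap$ at the level of Borel--Moore homology; properness of $\pi$ ensures that $p_{13}$ is proper on the support of the resulting cycle, so the pushforward is defined. Associativity is a diagram chase on $M^{4}$, and $[\Delta_{M}] \in \HBM(Z;K)$ is a two-sided unit; a dimension count then shows that this product preserves the top-degree subspace $\HBMt(Z;K)$.

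For part (2) I would invoke the elementary fact that for a complex variety $V$ of pure complex dimension $n$, the top group $\HBM_{2n}(V;K)$ is freely generated by the fundamental classes of its irreducible components. This is proved by induction on the number of components via the long exact sequence of Borel--Moore homology for closed--open decompositions, together with the identification $\HBM_{2n}(U;K) \cong K$ for $U$ a non-empty open subset of an irreducible variety of dimension $n$.

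The main obstacle: at the level of this proposition there is none, as it is imported verbatim from \cite{CG97}. The actual substance is deferred to the application, where one must verify that the wreath Steinberg variety $Z_{m \wr d}$ meets both hypotheses---namely that $\mu_{m \wr d}$ is proper (which reduces, via the closed embedding $\widetilde{\cN}_{m\wr d} \hookrightarrow \cN_m^d \times \cF l_{m \wr d}$, to projectivity of $\cF l_{m \wr d}$) and that the irreducible components $\overline{Y_{w}}$ identified in Proposition~\ref{conormal} all share the same dimension (automatic, since each $Y_{w}$ is the conormal bundle to a locally closed smooth subvariety of $\cF l_{m \wr d} \times \cF l_{m \wr d}$, hence of dimension $\dim \cF l_{m \wr d}$ independent of $w$). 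Only at that point does one obtain the desired $(\Sigma_{m} \wr \Sigma_{d})$-indexed basis of $\HBMt(Z_{m \wr d};\QQ)$.
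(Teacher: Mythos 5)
Your proposal is correct and matches the paper's treatment: the paper gives no proof of this proposition, simply importing both parts from \cite[\S 2.7]{CG97} exactly as you do, and your sketches of the convolution product and of the fundamental-class basis are the standard arguments behind the cited results. Your closing remarks about where the real work lies (properness of $\mu_{m\wr d}$ and equidimensionality of the components $\overline{Y_w}$) also accurately reflect how the paper deploys this proposition later.
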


Following \cite[\S3]{CG97}, the above-mentioned results apply to the the Steinberg variety $Z_m$ for $\GL_{m}$. 
Within this section, we abbreviate $\HBMt(-;\QQ)$ by $H(-)$ when it is convenient.
It is further proved therein that the top Borel-Moore homology with rational coefficients gives a geometric realization of the group algebra of $\Sigma_m$ as below:

 \begin{prop}\label{prop:CGiso}
 \begin{enua}
\item {\cite[Theorem 3.4.1, Claim 3.4.13]{CG97}}
There is an isomorphism $\QQ[\Sigma_m] \to \HBMt(Z_m;\QQ)$ of algebras, where each $w\in \Sigma_m$ is sent to a certain fundamental class $[\Lambda^0_w]$ obtained by taking specialization. 
 \item {\cite[Lemma 3.4.14]{CG97}}
For $w \in \Sigma_m$, let $Y'_w$ be the conormal bundle to the orbit $\cO(w) \in \GL_m \backslash (\cF l_m \times \cF l_m)$.
Then, with respect to the Bruhat order on $\Sigma_m$, there are $n_{x,w} \in \QQ$ such that 
\begin{equation} \label{classconormal}
	[\Lambda^0_w] = { {\textstyle \sum_{y \leq w}} \red{n_{y,w}}{[\overline{Y'_{y}}]},
	\quad n_{w,w}=1}.
\end{equation} 
\end{enua}
\end{prop}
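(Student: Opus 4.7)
The plan is to retrace the argument of Chriss--Ginzburg for the type A Steinberg variety, adapted to our notation. \textbf{First}, I would nail down the dimension count: the type A case of Proposition~\ref{conormal} (which in this setting is \cite[Proposition 3.3.4]{CG97}) identifies the irreducible components of $Z_m$ with the closures $\overline{Y'_w}$ for $w \in \Sigma_m$, all of the same dimension $\dim\widetilde{\cN}_m$. By Proposition~\ref{prop:HBM}(2), the fundamental classes $\{[Y'_w]\}_{w \in \Sigma_m}$ form a $\QQ$-basis of $\HBMt(Z_m)$, matching $\dim_\QQ \QQ[\Sigma_m]$.

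\textbf{Second}, I would construct $[\Lambda^0_w]$ by specialization from the regular semisimple locus. Over $\cN_m^{rs} \subseteq \cN_m$, the Springer map $\mu_m$ is a finite \'etale Galois cover with group $\Sigma_m$, so on the open locus $(\mu_m \times \mu_m)^{-1}(\Delta \cN_m^{rs}) \subseteq Z_m$ the graph $\Gamma_w^0$ of the $w$-action is a smooth Lagrangian. Applying deformation to the normal cone (or Verdier's specialization functor) to the embedding $\Delta\cN_m^{rs} \hookrightarrow \cN_m$, one lifts $[\Gamma_w^0]$ to a class $[\Lambda_w^0] \in \HBMt(Z_m)$. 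To show $w \mapsto [\Lambda^0_w]$ is an algebra homomorphism, observe that on the open locus the convolution of correspondences $\Gamma_w^0 \star \Gamma_{w'}^0$ is exactly $\Gamma_{ww'}^0$ because the Galois actions compose; since specialization commutes with proper pushforward and flat pullback (the operations defining convolution), one obtains $[\Lambda^0_w] \star [\Lambda^0_{w'}] = [\Lambda^0_{ww'}]$.

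\textbf{Third}, part (2) reduces to a leading-term analysis. By construction $[\Lambda^0_w]$ is supported in the closure inside $Z_m$ of $\Gamma_w^0$, which meets each stratum $Y'_y$ only when $y \leq w$ in the Bruhat order, since the orbit closures $\overline{\cO(w)}$ obey Bruhat-order closure on $\cF l_m \times \cF l_m$ and conormal bundles pull back this partial order. Thus $[\Lambda^0_w] = \sum_{y \leq w} c_{w,y}[Y'_y]$. The leading coefficient $c_{w,w}$ equals $1$ because $\Gamma_w^0$ lies over the open dense stratum $\cO(w)$ and, under the specialization, restricts transversally (with multiplicity one) to an open dense piece of the conormal bundle $Y'_w$. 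Triangularity then shows the homomorphism of part (1) is an isomorphism.

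The principal technical obstacle is the rigorous implementation of the specialization: verifying that the family of Lagrangians $\Gamma_w^0$ over $\cN_m^{rs}$ has a well-defined limit class on $Z_m$, and that this limit commutes with the convolution product. The cleanest tool is Verdier's specialization $\mathop{\mathrm{Sp}}$ on Borel--Moore homology, whose compatibility with proper pushforward and refined intersection pullback is what ultimately makes the assignment $w \mapsto [\Lambda^0_w]$ multiplicative. Everything else (dimension count, Bruhat triangularity, basis) is essentially bookkeeping once the specialization machinery is set up.
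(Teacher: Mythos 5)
This proposition is quoted in the paper directly from \cite[Theorem 3.4.1, Claim 3.4.13, Lemma 3.4.14]{CG97} with no proof given, so what you are really doing is reconstructing the Chriss--Ginzburg argument. Your overall architecture is the right one --- components of $Z_m$ are the closures of the conormal bundles $Y'_w$, all of dimension $\dim\widetilde{\cN}_m$, so their classes form a basis; the classes $[\Lambda^0_w]$ are produced by specialization; multiplicativity comes from compatibility of specialization with convolution; and the unitriangularity of $[\Lambda^0_w]$ against $\{[Y'_y]\}_{y\le w}$ in the Bruhat order gives both part (2) and the bijectivity in part (1).

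There is, however, one concrete error in your setup of the specialization. You work ``over $\cN_m^{rs}\subseteq\cN_m$'' and propose a deformation to the normal cone of $\Delta\cN_m^{rs}\hookrightarrow\cN_m$. The nilpotent cone contains no regular semisimple elements, so this locus is empty and the Springer map $\mu_m:\widetilde{\cN}_m\to\cN_m$ is never a $\Sigma_m$-Galois cover over any open subset of $\cN_m$ (over the regular nilpotent locus it is an isomorphism, not a $|W|$-fold cover). The correct construction, which the paper recalls in the remark immediately after the proposition, uses the Grothendieck simultaneous resolution $\widetilde{\fgl}_m\to\fgl_m$ together with the projection $\nu:\widetilde{\fgl}_m\to\mathfrak{H}$ to the abstract Cartan: for regular $h\in\mathfrak{H}$ one sets $\Lambda^h_w=(\nu\times\nu)^{-1}(w.h,h)$, and one specializes the Cartan parameter $h\to 0$ in this family; the special fiber is where the Steinberg variety lives. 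The covering with Galois group $\Sigma_m$ sits over the regular semisimple locus of $\fgl_m$, not of $\cN_m$. A second, smaller bookkeeping point: the composition of the generic-fiber correspondences is not literally $\Lambda^h_w\circ\Lambda^h_{w'}$; one must match the Cartan parameters, i.e.\ $\Lambda^{w'.h}_w\circ\Lambda^h_{w'}=\Lambda^h_{ww'}$, which is what makes the specialized classes multiplicative. With the specialization relocated to the Grothendieck family, the rest of your argument (dimension count, Bruhat-order support estimate, leading coefficient $1$) goes through as in \cite[\S 3.4]{CG97}.
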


\rmk
Following \cite[\S3.4]{CG97}, for each $w \in \Sigma_m$ and a fixed regular vector $h$ in the abstract Cartan subalgebra $\mathfrak{H}$ of $\fgl_m$, there is a fiber  
$\Lambda_w^{h}:= (\nu \times \nu)\inv(w.h, h) $, where  $\nu$ is the projection $\widetilde{\fgl}_m \to \mathfrak{H}$. That is,
$\Lambda_w^{h} = \{(x, \fb, \fb') \in \fgl_m \times \fB_m \times \fB_m ~|~ x \in \fb \cap \fb',\ \nu(x,\fb) = w.h, \ \nu(x, \fb') = h\}$.
By taking the specialization map (see \cite[(2.6.30)]{CG97}), one defines a class $[\Lambda_w^0]$ that is not the class of some subvariety, and does not depend on the choice of $h$.

It is tempting to mimic this approach to construct the desirable classes for the generators $t_k$'s in $\Sigma_m \wr \Sigma_d$ (see Example \ref{ex:Smd}). 
However, such a specialization map produces classes $[\Lambda_{t_k}^0]$ that do not sit inside $\HBMt(Z_{m \wr d})$, in general.
We have to construct explicitly classes in $\HBMt(Z_{m \wr d})$ that play the same role as the elements $t_k$'s in the group algebra.
\endrmk

\red{Finally, we mention the notion of set-theoretic composition which will appear during the computation of convolutions in Borel--Moore homology. Let $M_1$, $M_2$, $M_3$ be oriented $C^{\infty}$-manifolds, and let $Z_{12} \subset M_1 \times M_2$, $Z_{23} \subset M_2 \times M_3$ be closed subsets. Then we define their set-theoretic composition to be 
\eq \label{set-theoreticcomposition}
Z_{12} \circ Z_{23} \coloneqq \{(m_1,m_3) \in M_1 \times M_3 \ | \ \exists \ m_2 \in M_2 \ \text{such that} \ (m_1,m_2) \in Z_{12}, \ (m_2,m_3) \in Z_{23} \}.
\endeq}

\subsection{The Algebra $\HBMt(Z_{m \wr d};\QQ)$}
Combining Propositions \ref{basis} and \ref{prop:HBM}(b), we obtain a basis
$\{[\overline{Y_{w,\tau}}]\}$ of the  algebra $H(Z_{m \wr d})$ indexed by
$(w, \tau) \in (\Sigma_m \wr \Sigma_d)\times\Sigma_d$. 
One may think that this Steinberg variety  $Z_{m \wr d}$ may not be the right choice since the algebra is too large.
However, we are  able to construct a certain subalgebra $A_{m \wr d} \subseteq H(Z_{m \wr d})$ that realizes the group algebra $\QQ[\Sigma_m \wr \Sigma_d]$. 
Moreover, it is essential to work with such a algebra $H(Z_{m \wr d})$ with a larger dimension, so that the corresponding isotypic components of top Borel-Moore homology of Springer fibers indeed realizes simple $\QQ[\Sigma_m \wr \Sigma_d]$-modules arising from the Clifford theory. See Section \ref{sec:irrep}.

Next, we are to describe the multiplication rules with respect to this basis.
Since $\pi$ is proper and finite, $\overline{Y_{w,\tau}} :=\overline{\pi^{-1}(\cO(w)_{\tau})} =\pi^{-1}(\overline{\cO(w)_{\tau}})$. 
By the description in \eqref{componentoforbits}, 
$\overline{\cO(w)_{\tau}}=\bigcup_{w' \leq_{m \wr d} w } \cO(w')_{\tau}$, and hence
\eq\label{eq:overY}
[\overline{Y_{w,\tau}}]={\textstyle \sum_{w' \leq_{m \wr d} w} [Y_{w',\tau}]}.
\endeq
Note that since $Y_{w,\tau}$ is generally non-compact, it does not have a fundamental class in the sense of singular homology.
That being said, there is still a fundamental class $[Y_{w,\tau}] \in H(Y_{w,\tau})$ in the Borel-Moore homology (see \cite[Section 2.6.12]{CG97}).  
Here, an inclusion $H(Y_{w,\tau}) \rightarrow H(Z_{m \wr d})$ exists  if $Y_{w,\tau} \hookrightarrow Z_{m \wr d}$ is proper,
which means that we can only regard $[Y_{w,\tau}]$ as an element in $H(Z_{m \wr d})$ when $Y_{w, \tau}$ is closed.

In other words, the algebra structure on $H(Z_{m \wr d})$ can be determined from the following multiplication lemma between $[Y_{w,\tau}]$'s when at least one of the $w$'s is purely in $1\times \Sigma_d \subseteq \Sigma_m \wr \Sigma_d$. Such a lemma does not apply to general elements in  $H(Z_{m \wr d})$.
\begin{lemma} \label{convolution}
Suppose that $\tau, \tau' \in \Sigma_d$ and 
$w := (w_i)_i \sigma, w':= (w'_i)_i \sigma' \in \Sigma_m \wr\Sigma_d$.  Set
\[
Z_{12} :=Y_{w,\tau} =T^*_{\cO(w)_{\tau}}(\cF l_{m \wr d} \times \cF l_{m \wr d}), 
\quad
Z_{23}:=Y_{w',\tau'} =T^*_{\cO(w')_{\tau'}}(\cF l_{m \wr d} \times \cF l_{m \wr d}).
\]
Then, 
the following equality holds in $H(Z_{13})$, where \red{$Z_{13}=Z_{12} \circ Z_{23}$ is the set-theoretic composition defined in \eqref{set-theoreticcomposition}}
\begin{equation}\label{eq:convYY}
[Y_{w,\tau}] \ast [Y_{w',\tau'}] =[Z_{12}] \ast [Z_{23}] =\begin{cases}
0 & \text{if} \ \tau\sigma \neq \tau' \\
[Y_{ww',\tau}] & \text{if} \ \tau\sigma=\tau', \ \text{and either} \  (w_i)_i=(e)_i \ \text{or} \ (w'_i)_i=(e)_i.
\end{cases}
\end{equation}
Moreover, if either the inclusions $Z_{i,i+1} \hookrightarrow Z_{m \wr d}$ for $i\in\{1,2\}$ is proper, then \eqref{eq:convYY} holds in $H(Z_{m \wr d})$.
	\end{lemma}
\begin{proof}
	Recall that
	\eq
\begin{split}
			Y_{w,\tau}
			&=\pi^{-1}(\cO(w)_{\tau})
			\\
			&=\{ (\redtwo{x},\cF_{\bullet},\cF'_{\bullet}) \in Z_{m \wr d} \ | \ (\cF_{\bullet},\cF'_{\bullet}) \in ( (g_i\Fstd_\bullet)_i\tau, (g_iw_{\tau^{-1}(i)}\Fstd_\bullet)_i\tau\sigma ), \
			(g_i)_i \in \GL^d_m  \}. \label{wsig} 
		\end{split}
		\endeq
	When $\tau\sigma \neq \tau'$, the multiplication is zero since the composition $Y_{w,\tau} \circ Y_{w',\tau'}$ is empty.
	When $\tau\sigma = \tau'$, our strategy is to use \cite[Theorem 2.7.26]{CG97}, which is only applicable if the intersection {$\pi_{12}^{-1}(\cO(w)_{\tau}) \cap \pi_{23}^{-1}(\cO(w')_{\tau'})$ is transverse in the ambient space ${\cF l}_{m \wr d} \times {\cF l}_{m \wr d} \times  {\cF l}_{m \wr d}$}. 
	
	We begin with the case when $w_i = e = w'_i$ for all $i$.
	{A direct calculation shows that
		\eq
		\dim \pi_{12}^{-1}(\cO(w)_{\tau})=2d\dim \cF l_m=dm(m-1)=\dim  \pi_{23}^{-1}(\cO(w')_{\tau'}).
		\endeq Thus, $\codim\pi_{12}^{-1}(\cO(w)_{\tau})=\codim \pi_{23}^{-1}(\cO(w')_{\tau'})=d\dim \cF l_m$.
	}
	
	On the other hand, we also have 
	{
		\eq \label{eq:p12p13YY}
		\pi_{12}^{-1}(\cO(w)_{\tau}) \cap  \pi_{23}^{-1}(\cO(w')_{\tau'}) = \{ (\cF_{\bullet},\cF'_{\bullet},\cF''_{\bullet}) \ | \ (\cF_{\bullet},\cF'_{\bullet}) \in \cO((e)_i\sigma)_{\tau}, \ (\cF'_{\bullet},\cF''_{\bullet}) \in \cO((e)_i\sigma')_{\tau'}  \}
		\endeq 
	}
	Since $\tau\sigma=\tau'$, the condition on the wreath flags $(\cF_{\bullet},\cF'_{\bullet},\cF''_{\bullet})$ appearing in \eqref{eq:p12p13YY} is equivalent to 
	\eq
	(\cF_{\bullet},\cF'_{\bullet},\cF''_{\bullet}) \in \GL^d_m \cdot  (\Fstd_{\bullet}\tau, \Fstd_{\bullet}\tau\sigma,\Fstd_{\bullet}\tau\sigma\sigma').
	\endeq 
	The space of such triple flags can thus be identified with  the flag variety $\cF l_m^d$, and hence 
	{\eq
		\dim \pi_{12}^{-1}(\cO(w)_{\tau}) \cap  \pi_{23}^{-1}(\cO(w')_{\tau'})= d \dim \cF l_m.
		\endeq}
	We can then conclude that 
	{\eq
		\codim\pi_{12}^{-1}(\cO(w)_{\tau}) \cap  \pi_{23}^{-1}(\cO(w')_{\tau'}) =\codim\pi_{12}^{-1}(\cO(w)_{\tau}) + \codim \pi_{12}^{-1}(\cO(w')_{\tau'}).
		\endeq}
	That is, the intersection {$\pi_{12}^{-1}(\cO(w)_{\tau}) \cap  \pi_{23}^{-1}(\cO(w')_{\tau'})$} is transverse, and their composition is given by 
	{\begin{align*}
			\cO(w)_{\tau} \circ \cO(w')_{\tau'}&=\{ (\cF_{\bullet},\cF'_{\bullet}) \ | \ (\cF_{\bullet},\cF'_{\bullet}) \in \cO((e)_i\sigma\sigma')_{\tau} \}=\cO(ww')_{\tau}. 
	\end{align*}} Since the fiber of {$\pi_{13}:\pi_{12}^{-1}(\cO(w)_{\tau}) \cap  \pi_{23}^{-1}(\cO(w')_{\tau'}) \rightarrow \cO(w)_{\tau} \circ \cO(w')_{\tau'}$} is a point, the multiplication formula follows due to \cite[Theorem 2.7.26]{CG97}.
	
	Next, we consider the case when $(w_i)_i=(e)_i$ and $(w'_i)_i \neq (e)_i$. {Note that $\cO((w'_i)_i\sigma')_{\tau'} \cong \prod_{i=1}^d \cO(w'_i)$, so
		\eq 
		\dim \pi_{23}^{-1}(\cO(w')_{\tau'})=d\dim \cF l_m  + \sum\nolimits_{i=1}^d \dim \cO(w'_i)
		\endeq  It follows that $\codim \pi_{23}^{-1}(\cO(w')_{\tau'}) = 2d\dim \cF l_m  - \sum_{i=1}^d \dim \cO(w'_i)$.
	}
	
{Since $(w_i)_i=(e)_i$, from the first case, we know  that $\codim \pi_{12}^{-1}(\cO(w)_{\tau})=d\dim\cF l_m$, and hence 
		\eq
		\codim\pi_{12}^{-1}(\cO(w)_{\tau}) +  \codim \pi_{23}^{-1}(\cO(w')_{\tau'}) = 3d\dim \cF l_m  - \sum\nolimits_{i=1}^d \dim \cO(w'_i).
		\endeq On the other hand, 
		\eq\label{eq:p12p13YY2}
		\pi_{12}^{-1}(\cO((e)_i\sigma)_{\tau}) \cap  \pi_{23}^{-1}(\cO((w'_i)_i\sigma')_{\tau'}) =\{ (\cF_{\bullet},\cF'_{\bullet},\cF''_{\bullet}) \ | \ (\cF_{\bullet},\cF'_{\bullet}) \in \cO((e)_i\sigma)_{\tau}, \ (\cF'_{\bullet},\cF''_{\bullet}) \in \cO((w'_i)_i\sigma')_{\tau'}  \}. 
		\endeq 
	} Since $\tau\sigma=\tau'$,  the condition on the wreath flags appearing in \eqref{eq:p12p13YY2} is equivalent to 
	$(\cF_{\bullet},\cF'_{\bullet},\cF''_{\bullet}) \in \GL^d_m \cdot  (\Fstd_{\bullet}\tau, \Fstd_{\bullet}\tau\sigma,(w'_{\tau'^{-1}(i)})_i\Fstd_{\bullet}\tau\sigma\sigma')$.
	The space of such triple flags can then be identified with $\prod_{i=1}^d \cO(w'_i)$, and thus, 
	{\eq
		\begin{split}
			&\codim \pi_{12}^{-1}(\cO((e)_i\sigma)_{\tau}) \cap  \pi_{23}^{-1}(\cO((w'_i)_i\sigma')_{\tau'}) 
			=3d\dim \cF l_m  - \sum_{i=1}^d \dim \cO(w'_i).
			\\
			&\qquad= \codim \pi_{12}^{-1}(\cO((e)_i\sigma)_{\tau})  +  \codim \pi_{23}^{-1}(\cO((w'_i)_i\sigma')_{\tau'}).
		\end{split}
		\endeq}
	That is, the intersection {$\pi_{12}^{-1}(\cO((e)_i\sigma)_{\tau}) \cap  \pi_{23}^{-1}(\cO((w'_i)_i\sigma')_{\tau'})$} is transverse, and their composition is given by 
	{\begin{equation*}
			\cO((e)_i\sigma)_{\tau} \circ \cO((w'_i)_i\sigma')_{\tau'}=\{ (\cF_{\bullet},\cF'_{\bullet}) \ | \ (\cF_{\bullet},\cF'_{\bullet}) \in \cO((w'_{\sigma^{-1}(i)})_i\sigma\sigma')_{\tau} \}=\cO(ww')_{\tau}. 
	\end{equation*}} Since the fiber of {$\pi_{13}:\pi_{12}^{-1}(\cO((e)_i\sigma)_{\tau}) \cap  \pi_{23}^{-1}(\cO((w'_i)_i\sigma')_{\tau'})  \rightarrow \cO((e)_i\sigma)_{\tau} \circ \cO((w'_i)_i\sigma')_{\tau'}$} is a point, we obtain the desire result.
	The case $(w_i)_i \neq (e)_i$ and $(w'_i)_i=(e)_i$ is omitted since the argument  is similar.
\end{proof}

In the following example, we can see that when both $(w_i)_i $ and $(w'_i)_i$ are not the identity, the intersection  {$\pi_{12}^{-1}(\cO(w)_{\tau}) \cap \pi_{23}^{-1}(\cO(w')_{\tau'})$} is not guaranteed to be transverse, and hence \cite[Theorem 2.7.26]{CG97} does not apply.
In other words, it is not obvious how can one verify whether the ring $H(Z_{m \wr d})$ is semisimple.
\exa
Let $m=2$, $d=1$. Then, {$\dim\pi_{12}^{-1}(\cO(s_1))=3$. Similarly, $\dim\pi_{23}^{-1}(\cO(s_1))=3$.
	Since $\dim \cF l_2 \times \cF l_2 \times \cF l_2=3$, 
	\eq    
	\codim\pi_{12}^{-1}(\cO(s_1))+\codim\pi_{23}^{-1}(\cO(s_1))=0.
	\endeq} However, 
{$\pi_{12}^{-1}(\cO(s_1)) \cap \pi_{23}^{-1}(\cO(s_1))=\{(\cF_{\bullet},\cF'_{\bullet},\cF''_{\bullet}) \ | \ (\cF_{\bullet},\cF'_{\bullet},\cF''_{\bullet}) \in \GL_2 \cdot (\Fstd_{\bullet},s_1\Fstd_{\bullet},\Fstd_{\bullet})\}$, and $\dim \pi_{12}^{-1}(\cO(s_1)) \cap \pi_{23}^{-1}(\cO(s_1))=2$}. 
Thus, we conclude that 
{\eq
	\codim\pi_{12}^{-1}(Y_{s_1})+\codim\pi_{23}^{-1}(Y_{s_1})=0 \neq 1=\codim \pi_{12}^{-1}(Y_{s_1}) \cap \pi_{23}^{-1}(Y_{s_1}).    
	\endeq}
\red{That is, the intersection is not transverse. 
As a consequence, the multiplication $[Y_{s_1}] \ast [Y_{s_1}]$ cannot be computed using Lemma~\ref{convolution}.}
\endexa 

\subsection{A Geometric Realization for $\QQ[\Sigma_m \wr \Sigma_d]$}
Define the following sums of classes in $H(Z_{m \wr d})$:
\eq
\red{[\overline{Y_w} ]}\coloneqq \sum_{\tau \in \Sigma_d} [\overline{Y_{w,\tau}}],
\quad
[Y_w] \coloneqq \sum_{\tau \in \Sigma_d}[{Y_{w,\tau}}], \ \text{where} \ w \in \Sigma_m \wr \Sigma_d.
\endeq
Recall \red{$\zeta^{\tau}_i$} from \eqref{def:zetai}. It induces a map $\red{\zeta^{\tau}_{i,*}}: H(Z_m) \to H(Z_{m \wr d})$.
Then, for $g\in \Sigma_m, \ w = (w_i)_i e \in \Sigma_m \wr \Sigma_d$, we set
\eq
{[\Lambda^0_{g^{(i)}}] := \sum_{\tau \in \Sigma_d} \zeta^{\tau}_{i,*}([\Lambda^0_g])},
\quad
[\Lambda^0_{w}] := [\Lambda^0_{w_1^{(1)}}] * \dots * [\Lambda^0_{w_d^{(d)}}] \in H(Z_{m \wr d}).
\endeq
%
Let $A_{m \wr d} \subseteq \HBMt(Z_{m \wr d}; \QQ)$ be the $\QQ$-subalgebra generated by $[\Lambda_{s_i^{(j)}}^0]$ and $\red{[\overline{Y_{t_k}}]}$ for $1\leq i \leq m-1, 1\leq j \leq d, 1\leq k \leq d-1$. 

{\thm  \label{mainthm}
There is an algebra isomorphism
\[
\QQ[\Sigma_{m} \wr \Sigma_{d}] \cong A_{m \wr d},
\quad
s_i^{(j)} \mapsto [\Lambda_{s_i^{(j)}}^0], \ t_k \mapsto \red{[\overline{Y_{t_k}}]}.
\]
Moreover, $A_{m \wr d}$ is spanned by $\red{[\overline{Y_{w}}]}$ for $w \in \red{\Sigma_{m} \wr \Sigma_{d}}$.
\endthm}

\begin{proof}
	By construction, $\{[\Lambda^0_{s_i^{(j)}}]\}$ generate a subalgebra of $A_{m\wr d}$ that is  isomorphic to $\QQ[\Sigma^d_m]$. 
	Denote by $(w_\red{l})_\red{l}.s_j$ the place permutation of the $j$th and $(j+1)$th entries of $(w_\red{l})_\red{l} \in \Sigma_m^d.$ 
	We will check that the following relations hold, for $1\leq j \leq d-1, |j-i|\geq 2, 1\leq k \leq d-2$,  $(w_\red{l})_\red{l} \in \Sigma_ m^d$:
\begin{align}
&\textup{(quadratic relations) }\label{def:QR}
&\red{[\overline{Y_{t_j}}]^2} = [\Lambda^0_{e}],
\\
&\textup{(wreath relations) }\label{def:WR}
& \red{[\overline{Y_{t_j}}]}*[\Lambda^0_{(w_\red{l})_\red{l}}] = [\Lambda^0_{(w_\red{l})_\red{l}.s_j}]*\red{[\overline{Y_{t_j}}]},
\\
&\textup{(braid relations) } \label{def:BR3}
& \red{[\overline{Y_{t_k}}]}*\red{[\overline{Y_{t_{k+1}}}]}*\red{[\overline{Y_{t_k}}]} = \red{[\overline{Y_{t_{k+1}}}]}*\red{[\overline{Y_{t_k}}]}*\red{[\overline{Y_{t_{k+1}}}]},  
\\ 
\label{def:BR2}&&\red{[\overline{Y_{t_i}}]}*\red{[\overline{Y_{t_j}}]}=\red{[\overline{Y_{t_j}}]}*\red{[\overline{Y_{t_i}}]}.
\end{align} {Moreover, since $(e)_\red{l}t_k$ are minimal with respect to the Bruhat order in Definition \ref{def:newBruhat}, we have  $\red{[\overline{Y_{t_k}}]}=\sum_{\tau \in \Sigma_d} [\overline{Y_{(e)_\red{l}t_k,\tau}}]=\sum_{\tau \in \Sigma_d}[{Y_{(e)_\red{l}t_k,\tau}}]$. Thus, according to Lemma \ref{convolution}, the above relations are all equations in $H(Z_{m \wr d})$.
}

To verify \eqref{def:QR} and \eqref{def:WR}, it suffices to consider the ``rank one'' case. That is, consider $t = t_1 \in \Sigma_m \wr \Sigma_2$, we need to show that
\eq\label{tt}
 \red{[\overline{Y_{t}}]} \ast \red{[\overline{Y_{t}}]} = [\Lambda^0_{e}],
\endeq
which follows from combining Lemma~\ref{convolution} and Proposition \ref{prop:CGiso}(b), since
\eq
 \red{[\overline{Y_{t}}]} \ast \red{[\overline{Y_{t}}]} = \sum_{\tau, \tau' \in \Sigma_d} [Y_{t, \tau}] * [Y_{t, \tau'}] =\sum_{\tau \in \Sigma_d} [Y_{t^2, \tau}]  = [Y_e] = [\Lambda^0_e].
\endeq
For \eqref{def:WR}, it suffices prove the following, for any $s = s_i \in \Sigma_m$:
\eq\label{ts} 
\red{[\overline{Y_{t}}]} \ast 	[\Lambda^0_{s^{(1)}}] = [\Lambda^0_{s^{(2)}}] \ast \red{[\overline{Y_{t}}]}.
\endeq
By Proposition \ref{prop:CGiso}(b), $[\Lambda^0_{s}] =\red{[\overline{Y'_{s}}]}+q$ for some $q = n_{e,s}\in \QQ$, and hence for $j = 1$ or 2, $[\Lambda^0_{s^{(j)}}] = \red{[\overline{Y_{s^{(j)}}}]}+q$ for the same $q$.
Therefore, \eqref{ts} holds as long as $\red{[\overline{Y_{t}}]} \ast \red{[\overline{Y_{s^{(1)}}}]} = \red{[\overline{Y_{s^{(2)}}}]} \ast \red{[\overline{Y_{t}}]}$, or equivalently, 
\eq\label{eq:wrtocheck}
[{Y}_{t}] \ast [{Y}_{s^{(1)}}] + [{Y}_{t}]*[Y_e] = [{Y}_{s^{(2)}}]*[{Y}_{t}] + [Y_e] \ast [{Y}_{t}].
\endeq
It turns out that Lemma~\ref{convolution} applies to all four multiplications appearing in \eqref{eq:wrtocheck}, 
and hence we prove \eqref{ts}. 

It  remains to prove the braid relations \eqref{def:BR3} -- \eqref{def:BR2} for the ``rank three case'' in $\Sigma_m \wr \Sigma_4$, i.e., 
\eq \label{braid}
	\red{[\overline{Y_{t_1}}]} \ast \red{[\overline{Y_{t_2}}]} \ast \red{[\overline{Y_{t_1}}]}=\red{[\overline{Y_{t_2}}]} \ast \red{[\overline{Y_{t_1}}]} \ast \red{[\overline{Y_{t_2}}]},
	\quad 
	\red{[\overline{Y_{t_1}}]} \ast \red{[\overline{Y_{t_3}}]} =\red{[\overline{Y_{t_3}}]} \ast \red{[\overline{Y_{t_1}}]}.
\endeq
Since $\red{[\overline{Y_{t_j}}]} = [Y_{t_j}]$ for all $j$, we can once again apply Lemma~\ref{convolution} to verify \eqref{braid}.
The proof of the isomorphism is complete.

Next, note that the set $\{[\overline{Y_w}] \}_{ w \in \Sigma_m \wr \Sigma_d }$ is linear independent. 
Denote by $A'$ the subspace of $H(Z_{m \wr d})$ spanned by $\{[\overline{Y_w}] \}_{ w \in \Sigma_m \wr \Sigma_d }$. Thanks to the first part of the theorem, the dimensions of $A_{m \wr d}$ and $A'$ coincide.
Thus, it suffices to show that $A_{m \wr d} \subseteq A'$. 

Thanks to the wreath relation \eqref{def:WR} and the fact that $\red{[\overline{Y_{t_j}}]} * \red{[\overline{Y_{t_k}}]} = \red{[\overline{Y_{t_jt_k}}]}$, any typical element of $A_{m\wr d}$ must be of the form $[\Lambda^0_{(w_i)_i}] * \red{[\overline{Y_{\sigma}}]}$ for some $(w_i)_i\sigma \in \Sigma_m \wr \Sigma_d$ and $\sigma \in 1\times \Sigma_d \subseteq \Sigma_m \wr \Sigma_d$. 
By Proposition ~\ref{prop:CGiso}, for each $1 \leq i \leq d$ we have $[\Lambda^0_{w_i}]=\sum_{y_i \leq_m w_i} n_{y_i,w_i}\red{[\overline{Y_{y_i}}]}$ for some $n_{y_i,w_i} \in \QQ$, and hence, by Lemma \ref{convolution},
\eq
\begin{split}
[\Lambda^0_{(w_i)_i}] * \red{[\overline{Y_{\sigma}}]} 
&=
\sum_{y_i \leq_m w_i} 
({\textstyle\prod_{i=1}^d n_{y_i,w_i}}) \biggl ( 
\sum_{\tau \in \Sigma_d}
\red{[\overline{Y_{(y_i)_i,\tau}}]}   \ast 
\sum_{\tau' \in \Sigma_d} \red{[\overline{Y_{\sigma,\tau'}}]} \biggr)
\\
&=\sum_{y'_i \leq_m y_i \leq_m w_i}
({\textstyle\prod_{i=1}^d n_{y_i,w_i}}) 
\sum_{\tau \in \Sigma_d} 
[{Y_{(y'_i)_i\sigma, \tau}}] 
= \sum_{y_i \leq_m w_i} ({\textstyle\prod_{i=1}^d n_{y_i,w_i}})\sum_{\tau \in \Sigma_d} \red{[\overline{Y_{(y_i)_i\sigma,\tau}}]},
\end{split}
\endeq 
which lies in $A'$. 
\end{proof}

\section{Springer Correspondence}\label{sec:irrep}
In this section, we obtain a geometric classification of the simples of  $\Sigma_{m} \wr \Sigma_d$ by establishing a new geometric classification theorem of simple modules over the subalgebra $A_{m \wr d}$ produced in Theorem \ref{mainthm}.
We remark that the counterpart in \cite{CG97} requires semisimplicity of  $H(Z)$, and hence does not apply to our case.
As a result, we establish the geometric interpretation of the Clifford theory for wreath products, for the first time.

\subsection{A Lagrangian Construction}
We first recall a useful result in geometric representation theory -- the classification theorem for complex irreducible representations over $\HBMt(Z)$ (see \cite[Theorem 3.5.7]{CG97}).  
Let us list  the required assumptions in \cite[\S 3.5]{CG97}:

\Def\label{Def:Lag}
Let $G$ be an algebraic group with a Borel subgroup $B$.
We call a morphism $\mu:\widetilde{\cN} \rightarrow \cN$ of $G$-variety a {\em Springer resolution} if the following conditions hold: 
\begin{enumerate} 
	\item $\widetilde{\cN}$ is smooth.
	\item $\cN$ has finitely many $G$-orbits.
	\item $\mu:\widetilde{\cN} \rightarrow \cN$ is $G$-equivariant and proper. 
	\item (dimension property) For each $x\in \cN$, all irreducible components of $\fB_x := \mu^{-1}(x)$ have the same dimension given by
	$\dim\fB_x =  \dim (G/B) - \frac{1}{2}\dim (G \cdot x)$.
\end{enumerate}
\endDef
Within this section, assume that $\mu$ is a Springer resolution. 
Let $Z := \tcN {\times}_\cN \tcN$ be the Steinberg variety.
It follows from  \cite[2.7.40]{CG97} that each $H(\fB_x)$ has a left  and a right $H(Z)$-module structure, denoted by $H(\fB_x)_L$ and $H(\fB_x)_R$, respectively.

For any finite-dimensional left module $V$ of $H(Z)$, denote by $V^\vee$ the right $H(Z)$-module with  underlying space $V^*:= \Hom_\QQ(V, \QQ)$ on which the action is given by
\eq
(f\cdot a) : v\mapsto f(a \cdot v), \quad a\in H(Z), \ f \in V^\vee, \ v\in V.
\endeq
For  $x \in \cN$, 
let $G(x)$ be the centralizer of $x$ in $G$,  
$G^0(x)$ be the identity component, and $C(x):=G(x)/G^0(x)$ be the component group. 
Note that ${H}(\fB_x)_R$ admits a left action over $G$ and over $C(x)$ that are compatible with the right $H(Z)$-action. 
By  \cite[Claim 3.5.5]{CG97},  ${H}(\fB_x)_L^\vee$ is also a left $C(x)$-module via
\eq
{(g\cdot {\check{v}})(v)} = {\check{v}} (g\inv \cdot v), \quad \textup{where}\  g\in C(x), \ {\check{v}}  \in H(\fB_x)_L^\vee, \ v\in H(\fB_x)_L.
\endeq 

For a group $\Gamma$, denote by Irr-$\CC[\Gamma]$ the set of all its irreducible complex representations, and by $\widehat{\Gamma} = \textup{Irr-}\CC[\Gamma]/\sim$ the set of iso classes of Irr-$\CC[\Gamma]$.
We identify $\widehat{\Gamma}$ with a fixed set of representatives of the \red{isomorphism classes}.
For $x \in \cN$, since $\cH(\fB_x)_L$ is a $(H(Z),C(x))$-bimodule (see \cite[Lemma 3.5.3]{CG97}), there is a bimodule decomposition
$\CC\otimes_\QQ H(\fB_x)_L = \bigoplus_{\psi \in C(x)^\wedge} \psi \otimes H(\fB_x)_\psi$,
where $H(\fB_x)_\psi$ is called the isotypic component, 
and
$\psi$ runs over all (\red{isomorphism classes} of) irreducibles which occur in $\CC\otimes H(\fB_x)$, i.e.,
\eq\label{def:Cxwedge}
C(x)^{\wedge} := \{\psi \in \widehat{C(x)} ~|~  \ [\CC \otimes_\QQ \HBMt(\fB_x; \QQ)_L: \psi]\neq 0\}.
\endeq
As already mentioned in the comments following \cite[Lemma 3.5.3]{CG97}, it is necessary to work with complex coefficients.

\begin{prop}[{\cite[Theorem 3.5.7]{CG97}}]\label{prop:GinzLagr}
Let $\mu$ be a Springer resolution in the sense of Definition~\ref{Def:Lag}. Suppose that
\enu[\textup{(C}1\textup{)}] 
\item  $H(Z)$ is semisimple, and
\item 
For any $x\in \cN$, 
the isomorphism ${H}(\fB_x)_R \cong {H}(\fB_x)_L^\vee$ of right $H(Z)$-modules is compatible with their respective $C(x)$-actions.
\endenu
Then, the complete set of irreducible $\HBMt(Z;\QQ)$-modules over $\CC$, up to isomorphism, is given by the set $\{{H}(\fB_x)_\psi ~|~ [x, \psi] \in \II\}$,
where
\eq\label{def:II}
\II := \{ G\textup{-conjugacy class }[x,\psi] \textup{ of }(x,\psi) ~|~ x\in \cN, \psi \in C(x)^\wedge\}.
\endeq
\end{prop}
The proof of  \cite[Theorem 3.5.7]{CG97} relies on a closer understanding of 
the preimage $Z_\cO$ of the $G$-orbit $\cO \subseteq \cN $ under the projection $Z \to \cN$.
For one, there is a corresponding filtration $(Z_{\leq \cO}:= \bigsqcup_{\cO'}Z_{\cO'} )_{\cO \subseteq \cN}$ of $Z$ induced from the Bruhat order given by
$\cO \leq \cO' \Leftrightarrow \cO \subseteq \overline{\cO'}$.
It follows from \cite[Corollary 3.5.13]{CG97} that there is  a $H(Z)$-bimodule 
\eq
H_\cO := H(Z_{\leq \cO})/H(Z_{< \cO}),
\endeq
with basis formed by the fundamental classes of irreducible components of $Z_\cO$.
Secondly, it is crucial that the $C(x)$-orbits of the set of irreducible components of $\fB_x \times \fB_x$ are in bijection with the irreducible components of $Z_{\cO}$. 

We will see that this correspondence is no longer a bijection in our setup.
Nevertheless, we can still establish the following  variant of \cite[Theorem 3.5.7]{CG97} which leads to the Springer correspondence for wreath products.

\begin{thm}\label{thm:irreps}

Let $\mu$ be a Springer resolution in the sense of Definition~\ref{Def:Lag}, 
and let $A \subseteq H(Z)$ be a subalgebra. 
Suppose that
\begin{enumerate}[\textup{(A}1\textup{)}]
    \item $A$ is semisimple, 
    \item For all $x\in \cN$, the isomorphism ${H}(\fB_x)_R \cong {H}(\fB_x)_L^\vee$ of right $A$-modules is compatible with their respective $C(x)$-actions, 
    \item For each $G$-orbit $\cO \subseteq \cN$, there is an isomorphism $A_\cO \cong H(\fB_x \times \fB_x)^{C(x)}$, where $A_\cO := (A \cap H_{\leq \cO})/(A \cap H_{<\cO})$,
    \item For all $x\in \cN$, $H(\fB_x)_L$ is an $(A, C(x))$-bimodule. 
Thus, the $\psi$-isotypic component $H(\fB_x)_{\psi}$ in the $(A,C(x))$-bimodule decomposition of $\CC\otimes_\QQ H(\fB_x)_L$ is well-defined.
\end{enumerate} 
Then, the complete set of irreducible $A$-modules over $\CC$,  up to isomorphism, is given by the set $\{{H}(\fB_x)_\psi ~|~ [x, \psi] \in \II\}$.
\end{thm}
\begin{proof}
Denote by $\{L_\lambda \}_{\lambda\in I}$  the complete set of irreducible $A$-modules over $\CC$,  up to isomorphism.
Fix $[x, \psi] \in \II$.
Thanks to (A4), the complex $A$-module $H(\fB_x)_\psi$ is well-defined,
and we may write
$
H(B_x)_\psi = \bigoplus_{\lambda \in I} L_\lambda^{\oplus m_\lambda}
$
\textup{for some}
$m_\lambda = m_\lambda(x,\psi) \in \ZZ_{\geq 0}$.
Hence,
\eq\label{eq:HLM}
\End_{\CC}( H(\fB_x)_\psi) 
= \bigoplus_{\lambda, \mu  \in I}
\Hom_{\CC}(L_\lambda, L_\mu)^{\oplus m_\lambda m_\mu}.
\endeq
Let $\cO$ be the $G$-orbit  containing $x$. Then,
\eq
\begin{split}
\End_{C(x)}( H(\fB_x)_L) &=  (H(\fB_x)_L \otimes H(\fB_x)_L^\vee)^{C(x)} \quad\textup{(by definition)}
\\
&\cong (H(\fB_x)_L \otimes H(\fB_x)_R)^{C(x)}\quad \textup{(thanks to (A2))}
\\
&= H(\fB_x\times \fB_x)^{C(x)}\quad \textup{(by definition)}
\\
&\cong A_\cO \quad \textup{(thanks to (A3))}.
\end{split}
\endeq
Then, we take the associated graded space
$\textup{gr}A = \bigoplus_{\cO \subseteq \cN} A_\cO$.
By (A1), on one hand the semisimplicity implies that $A \cong \textup{gr}A$, and hence
\eq\label{eq:AHO}
A \cong  \bigoplus_{\cO\subseteq \cN} A_\cO \cong \bigoplus_{\cO\subseteq \cN} \End_{C(x)}( H(\fB_x)_L).
\endeq
Therefore, by combining \eqref{eq:HLM} and \eqref{eq:AHO},
\eq
\begin{split}
\CC\otimes_\QQ A 
&\cong \CC\otimes_\QQ \bigoplus_{\cO\subseteq \cN} \End_{C(x)}( H(\fB_x)_L) 
= \bigoplus_{[x,\psi]\in \II} \End_{\CC}( H(\fB_x)_\psi) 
\\
&= \bigoplus_{\lambda, \mu  \in I}
\Hom_{\CC}(L_\lambda, L_\mu)^{\oplus (\sum_{[x,\psi] \in \II} m_\lambda(x,\psi) m_\mu(x,\psi))}.
\end{split}
\endeq
On the other hand, (A1) implies that $\CC\otimes_\QQ A $ is also semisimple, and hence $\CC\otimes_\QQ A = \bigoplus_{\lambda \in I}\End_{\CC}(L_\lambda)$ (by \cite[(3.5.22)]{CG97}),
or, $\delta_{\lambda,\mu} = \sum_{[x,\psi] \in \II} m_\lambda(x,\psi) m_\mu(x,\psi)$ for all $\lambda, \mu \in I$.
In words, each $[x, \psi] \in \II$ is associated with a unique $\lambda = \lambda(x,\psi)$ such that $m_{\lambda}(x,\psi)=1$ and $m_\mu({x,\psi}) = 0$ for all $\mu \neq \lambda$. 
That is, the complete set of non-isomorphic irreducibles are given by $\{{H}(\fB_x)_\psi ~|~ [x, \psi] \in \II\}$.
\end{proof}

For the rest of this section, we will verify that Theorem~\ref{thm:irreps} does apply, in our setup.
\subsection{Springer Resolution}
Let $G=G_{m \wr d}$, $B=B^d_m$, and recall the
wreath Springer resolution $\mu = \mu_{m \wr d}:\tcN_{m \wr d} \rightarrow \cN_m^d$  from \eqref{wreath-springer-resolution}.
For  $x=(\redtwo{x}_i)_i \in \cN^d_m$, the set theoretic description for Springer fiber is given by
\eq \label{wreathSpringerfiber}
\fB_x =\{ (F^i_{\bullet})_iw \in \cF l_{m \wr d} \ | \  \redtwo{x}_iF^i_j \subseteq F^i_{j-1} \ \text{for all} \ i,j \}.
\endeq

{\prop \label{setup}
The wreath Springer resolution $\mu_{m \wr d}: \red{\tcN_{m \wr d}} \rightarrow \cN_m^d$ is a Springer resolution in the sense of Definition \ref{Def:Lag}.
\endprop}

\begin{proof}
By  Propositions \ref{prop:GLmwrd} and  \ref{compatabilitywrfl} (a), there are identifications $G_{m \wr d}\equiv \GL_m \wr  \Sigma_d$ and $\cF l_{m \wr d} = \cF l_{m} \wr \Sigma_d$, respectively. Thus, $G_{m \wr d}$ and $T^*\cF l_{m \wr d}$ are disjoint unions of $d!$-copies of $\GL^d_m$ and $T^*\cF l_m^d$, respectively, and hence (1) holds. 
Next, (2) follows from the orbit decomposition \eqref{orbitdecomp}. 

For (3), the $G_{m \wr d}$-equivariance follows from the fact that
\eq
\mu_{m \wr d} (\gamma \cdot( x, \cF )) =\mu_{m \wr d}( \gamma \cdot  x, \gamma \cdot \cF )=\gamma \cdot  x= \gamma \cdot \mu_{m \wr d} ( x,\cF ) ,
\endeq 
for all  $\gamma  \in G_{m \wr d}$, $(x, \cF) \in \tcN_{m \wr d}$.
For properness, it follows from the fact that $\mu_{m \wr d}$ is the restriction of the projection $\gl_{md} \times \cF l_{md} \rightarrow \gl_{md}$.

For the dimension property (4),  let $x = (\redtwo{x}_i)_i \in \cN_m^d$. 
Note that the fiber $\fB_x$ is a disjoint union of $d!$ copies of $\prod_{i=1}\fB_{\redtwo{x}_i}$. 
Next, the orbit $\cO := G_{m \wr d} \cdot x$ has dimension equals to the sum of dimensions of all $\GL_m \cdot \redtwo{x}_i$.
Finally, $\dim (G_{m \wr d}/B_m^d) = d\dim (\GL_m/B_m)$, and hence
\eq
\dim \fB_x = {\textstyle\sum_{i=1}^d} \dim \fB_{\redtwo{x}_i} = d\dim (\GL_m/B_m) - {\textstyle\frac{1}{2} \sum_{i=1}^d} (\GL_m \cdot \redtwo{x}_i) = \dim (G_{m \wr d}/B_m^d) - {\textstyle\frac{1}{2}} \dim \cO.
\endeq
\end{proof}

\subsection{Verification of (A2)}
Note that switching factors in $Z_{m \wr d}$ defines an involution $T'$, which induces an algebra anti-involution $T$ on $H(Z_{m \wr d})$ via $c \mapsto c^T$.
Consequently,  as left $H(Z_{m \wr d})$-modules, 
\eq\label{eq:LRT}
H(\fB_x)_L \cong H(\fB_x)_R^T, 
\endeq
where the latter 
is the module with underlying space $H(\fB_x)_R$ on which  $H(Z_{m \wr d})$ acts by 
$c \cdot v \coloneq v \cdot c^T$
for all $v \in H(\fB_x)$,  $c \in H(Z_{m \wr d})$.
\begin{lem}\label{lem:t}
	Under the isomorphism $A_{m \wr d} \cong \QQ[\Sigma_m \wr \Sigma_d]$, the anti-involution $T$ restricts to the anti-involution on $ \QQ[\Sigma_m \wr \Sigma_d]$ via $w \mapsto w^{-1}$ for all $w \in \Sigma_m \wr \Sigma_d$.
\end{lem}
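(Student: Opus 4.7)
The plan is to use that both $T$ and the inversion anti-involution $w \mapsto w^{-1}$ on $\CC[\Sigma_m \wr \Sigma_d]$ are algebra anti-involutions. Via the presentation from Theorem~\ref{mainthm}, it therefore suffices to verify that they agree on the generating set $\{[\Lambda^0_{s_i}]^{(j)}\}_{i,j} \cup \{[Y_{t_k}]\}_k$, which corresponds to the involutive generators $\{s_i^{(j)}\}_{i,j} \cup \{t_k\}_k$ in $\Sigma_m \wr \Sigma_d$. The goal thus reduces to showing that $T$ fixes each of these geometric classes.

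The key geometric observation, which I would use for the wreath generators $[Y_{t_k}]$, is that the switch-of-factors map $\tau \colon (x, \cF, \cF') \mapsto (x, \cF', \cF)$ on $Z_{m\wr d}$ (which induces $T$) sends the $G_{m\wr d}$-orbit $\cO(w) \subseteq \cF l_{m\wr d}^2$ to $\cO(w^{-1})$. Indeed, $\tau$ sends the base point $((\Fstd_\bullet)_i, w\cdot(\Fstd_\bullet)_i)$ of $\cO(w)$ to $(w\cdot(\Fstd_\bullet)_i, (\Fstd_\bullet)_i)$, which, after applying $w^{-1}$ via the diagonal $G_{m\wr d}$-action described by Proposition~\ref{compatabilitywrfl}(c), is the standard representative of $\cO(w^{-1})$. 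Equivariance of $\tau$ together with the conormal bundle construction of Lemma~\ref{Y=Z} then yields $[Y_w]^T = [Y_{w^{-1}}]$ for every $w \in \Sigma_m \wr \Sigma_d$. In particular, since $t_k$ is an involution, $[Y_{t_k}]^T = [Y_{t_k}]$.

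For the symmetric-group generators $[\Lambda^0_{s_i}]^{(j)}$, I would pull back to the sub-Steinberg variety $Z_m^d \subseteq Z_{m\wr d}$ and use the inclusion \eqref{eq:ZinZmd}: the involution $\tau$ restricts on $\HBMt(Z_m)^{\otimes d}$ to the tensor product of the type-A switch-of-factors anti-involutions, so it is enough to prove $[\Lambda^0_{s_i}]^T = [\Lambda^0_{s_i}]$ in $\HBMt(Z_m)$. By Proposition~\ref{prop:CGiso}(2), since the only Bruhat predecessor of the simple reflection $s_i$ is the identity, one has $[\Lambda^0_{s_i}] = [Y'_{s_i}] + q[Y'_e]$ for some $q \in \QQ$; the same orbit-swap argument applied in type A gives $[Y'_w]^T = [Y'_{w^{-1}}]$, so both summands are $T$-fixed as $s_i$ and $e$ are involutions. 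Taken together, these show $T$ fixes each generator, completing the argument.

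The only nontrivial technical step is the orbit identity $\tau(\cO(w)) = \cO(w^{-1})$ and the resulting fundamental-class equation; this is a short computation but is the geometric heart of the proof, and everything else is formal consequence of Theorem~\ref{mainthm} together with the bookkeeping that generators of $\Sigma_m \wr \Sigma_d$ all happen to be involutions.
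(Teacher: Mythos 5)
Your proposal is correct and follows essentially the same route as the paper: reduce to the generators, check that the switch-of-factors map sends $\cO(w)$ to $\cO(w^{-1})$ so that $[Y_{t_k}]^T=[Y_{t_k}]$, and handle the type-A generators via the inclusion \eqref{eq:ZinZmd}. The only (harmless) difference is that for $[\Lambda^0_{s_i}]$ the paper simply cites the known type-A identity $(\Lambda^0_w)^T=\Lambda^0_{w^{-1}}$ from \cite[Lemma 3.6.11]{CG97}, whereas you re-derive the special case for simple reflections from the triangularity relation of Proposition~\ref{prop:CGiso}(2) together with the orbit-swap identity $[Y'_w]^T=[Y'_{w^{-1}}]$.
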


\begin{proof}
Thanks to Theorem \ref{mainthm}, it suffices to consider the generators $[\Lambda^0_{(w_i)_i}]$ and $\red{[\overline{Y_{\sigma}}]}$, for some $w_i \in \Sigma_m$ and some  $\sigma = t_k \in \Sigma_m \wr \Sigma_d$.
It follows from {\cite[Lemma 3.6.11]{CG97}} that $[\Lambda^0_{(w_i)_i}]^{T}= [\Lambda^0_{(w_i\inv)_i}]$. 

It suffices to compute $[Y_{\sigma,\tau}]^{T}$ explicitly. 
By definition, 
$Y_{\sigma,\tau} = \{ (x,\cF_{\bullet},\cF'_{\bullet}) ~ | ~ (\cF_{\bullet},\cF'_{\bullet}) = (g_i)_i \cdot (\Fstd_{\bullet}\tau, \Fstd_{\bullet}\tau\sigma) \}$, and thus 
\eq
	Y_{\sigma,\tau}^{T'}=\{(x,\cF'_{\bullet},\cF_{\bullet}) \ | \ (\cF'_{\bullet},\cF_{\bullet})=(g_i)_i \cdot ( \Fstd_{\bullet}\tau\sigma, \Fstd_{\bullet}\tau)\}=Y_{\sigma^{-1},\tau\sigma}.
\endeq 
Therefore,
\eq
[\overline{Y_{\sigma}}]^T
=\sum_{\tau \in \Sigma_d} [Y_{\sigma,\tau}]^T
=\sum_{\tau \in \Sigma_d} [Y_{\sigma,\tau}^{T'}] 
=\sum_{\tau \in \Sigma_d} [Y_{\sigma^{-1},\tau\sigma}]
=[\overline{Y_{\sigma^{-1}}}].
\endeq
In other words, $T$ sends an arbitrary element $[\Lambda^0_{(w_i)_i}] \ast [\overline{Y_{\sigma}}]$ to $[\overline{Y_{\sigma^{-1}}}] \ast [\Lambda^0_{(w^{-1}_i)_i}]$, 
which corresponds to map  $w \mapsto \sigma^{-1}(w^{-1}_i)_i = w^{-1}$ under the desired isomorphism.
\end{proof}

\lem\label{lem:A2}
For all $x\in \cN_m^d$, the isomorphism ${H}(\fB_x)_R \cong {H}(\fB_x)_L^\vee$ of right $A_{m \wr d}$-modules is compatible with their respective $C(x)$-actions, 
\endlem
\proof
It suffices to show that there is an isomorphism $H(\fB_x)_L \cong (H(\fB_x)_L^\vee)^T$ that is compatible with the respective $C(x)$-actions, for all $x \in \cN_m^d$.
Since $H(\fB_x)$ is finite dimensional, 
by Lemma~\ref{lem:t},
there is a isomorphism $H(\fB_x)_L \to {(H(\fB_x)_L^\vee)^{T}}, v\mapsto v^*$ of {left} $\QQ[\Sigma_m \wr \Sigma_d]$-modules such that 
\eq 
(w \cdot v)^*({u}) = v^*(w\inv\cdot {u})
\endeq for all $v, {u}\in H(\fB_x)_L$ and  $w\in \Sigma_m \wr \Sigma_d$.
Therefore,
for each $g \in C(x), v \in H(\fB_x)_L$, $(g \cdot v)^*(u) = v^*(g\inv\cdot u)=(g\cdot v^*)(u)$ for all  $u \in H(\fB_x)$. 
\endproof
\subsection{Verification of (A3)}
Suppose that $Z$ is the Steinberg variety $Z$ corresponding to a Weyl group $W$ as in \cite[Theorem 3.5.7]{CG97}.
There is an algebra homorphism $\overline{f}: H_\cO \cong H(\fB_x \times \fB_x)^{C(x)}$ obtained using homomorphisms ${f}_1: H(Z_{\leq\cO}) \to H(Z_{\cO \cap U})$ and $f_2: H(Z_{\cO \cap U}) \to H(\fB_x \times \fB_x)$, where $U$ is a certain neighborhood of $x$. 
In fact, $\overline{f}$ is an isomorphism since the irreducible components of $Z_{\cO}$ are in bijection with the set of irreducible components of $\fB_x \times \fB_x$.

However, if $Z= Z_{m \wr d}$, such a bijection becomes a $d!$-to-one correspondence.
The idea of the lemma below is that, by taking an intersection with $A_{m \wr d}$, this multi-to-one correspondence still provides the desired isomorphism.

We begin with recalling the description of $f_1$ and $f_2$.
Let  $U \subseteq \cN^d_m$ be a neighborhood of $x$ such that $U \cap \cO = U \cap \overline{\cO}$.
By \cite[Lemma 3.2.20]{CG97}, there is a transversal slice $S \subseteq \cN^d_m$ to $\cN^d_m$ at $x$ through the orbit $\cO$. 
Let $\widetilde{U}=\mu_{m \wr d}^{-1}(U)$, and let $\widetilde{S}=\mu_{m \wr d}^{-1}(S)$. 
Then, according to \cite[Definition 3.2.19]{CG97} and \cite[Corollary 3.2.21]{CG97},  there are isomorphisms 
\begin{equation} \label{transiso}
	(\cO \cap U) \times S \simeq U, \quad  (\cO \cap U) \times \widetilde{S} \simeq \widetilde{U}.
\end{equation}
Note that  the restriction from $\cN^d_m$ to $U$ induces an algebra homomorphism $H(Z) \rightarrow H(Z_{U})$, where $Z_{U}=\widetilde{U} \times_{U} \widetilde{U}$. 
Thus, we obtain the algebra homomorphism $H(Z_{\leq \cO}) \rightarrow H(Z_{\leq \cO, U})$ where 
\begin{equation} \label{ZOU}
Z_{\leq \cO, U} =Z_{\leq \cO} \cap (\widetilde{U} \times_{U} \widetilde{U}) = \bigsqcup_{\cO' \leq \cO} Z_{\cO'} \cap  (\widetilde{U} \times_{U} \widetilde{U}).
\end{equation} 
It follows from $U \cap \cO = U \cap \overline{\cO}$ that  
$U \cap \cO' = \emptyset$, and  hence $Z_{\cO'} \cap  (\widetilde{U} \times_{U} \widetilde{U})=\emptyset$ for all $\cO' < \cO$. Therefore, $Z_{\leq \cO, U} =Z_{\cO} \cap (\widetilde{U} \times_{U} \widetilde{U}) =Z_{\cO \cap U}$.
That is, we obtain a homomorphism $f_1:H(Z_{\leq \cO}) \rightarrow H(Z_{\cO\cap U})$.
(Note that it induces the map $\overline{f}_1 : H_{\cO} \rightarrow H(Z_{\cO\cap U})$ since it takes  $H(Z_{<\cO})$ to zero.)

Next, by \eqref{transiso}, we obtain the following isomorphism:
\eq
	Z_{U}=\widetilde{U} \times_{U} \widetilde{U} \cong ((\cO \cap U) \times \widetilde{S}) \times_{(\cO \cap U) \times S} ((\cO \cap U) \times \widetilde{S}) \cong \Delta_{\cO \cap U} \times (\widetilde{S} \times_{S} \widetilde{S}) = \Delta_{\cO \cap U} \times Z_{S},
\endeq
where $\Delta_{\cO \cap U} = (\cO \cap U) \times_{\cO \cap U} (\cO \cap U)$.
Thus, there is 
a homomorphsim $H(Z_{U}) \rightarrow H(Z_{S})$. 
Let $l:=|\Sigma_d/C(x)|$.
Since there are $l$ components in the orbit $\cO$, we may write $S \cap \cO=\bigsqcup_{i=1}^{l} \{x^{(i)}\}$ with $x^{(1)}=x$. 
By restriction to $Z_{\cO \cap U}$, 
we obtain 
another homomorphism $f_2:H(Z_{\cO \cap U}) \rightarrow \bigoplus_{i=1}^l H(Z_{x^{(i)}})$.
In other words, we obtain a homomorphism $f := f_2 \circ f_1 : H(Z_{\leq\cO}) \to \bigoplus_{i=1}^l H(Z_{x^{(i)}})$.

\begin{lemma} \label{lem:A3}
Suppose that $A = A_{m \wr d}$. Then,
for each $G$-orbit $\cO \subseteq \cN_m^d$, there is an isomorphism $A_\cO \cong H(\fB_x \times \fB_x)^{C(x)}$.
\end{lemma}
\begin{proof}
The lemma will follow from a detailed description of the restriction of $f$ onto $A \cap \cH(Z_{\leq \cO})$.
Recall that $H_{\cO}$ has a basis indexed by the classes of the irreducible components in $Z_{\cO}$, i.e.,
\eq
\{ [\overline{Y_{w,\tau}}] + H(Z_{< \cO}) ~|~ Y_{w,\tau} \cap Z_{\cO} \neq \emptyset\},
\endeq
thanks to Proposition \ref{basis}. 
We claim that if $Y_{w,\tau} \cap Z_{\cO} \neq \emptyset$ for some $\tau \in \Sigma_d$, then $Y_{w,\tau'} \cap Z_{\cO} \neq \emptyset$ for all $\tau' \in \Sigma_d$.
Note that
\begin{equation}
Z_{\cO} \cap Y_{w,\tau} = \{(x,F_{\bullet},F'_{\bullet}) \in Z_{m \wr d} \ | \ x \in \cO, \ (F_{\bullet},F'_{\bullet}) \in \cO(w)_{\tau} \}.    
\end{equation} 
Let $w=(w_i)_i\sigma$. Given $(F_{\bullet},F'_{\bullet}) \in \cO(w)_{\tau}$, then
$(F_{\bullet},F'_{\bullet})=(g_i)_i \cdot (\Fstd_{\bullet}\tau,(w_{\tau^{-1}(i)})_{\red{i}}\Fstd_{\bullet}\tau\sigma)$
for some $g_i \in \GL_m$. 
Since $Y_{w,\tau} \cap Z_{\cO} \neq \emptyset$, we pick $x=(x_i)_i \in \cO$ such that $(x,F_{\bullet},F'_{\bullet}) \in Y_{w,\tau} \cap Z_{\cO}$.
Suppose that $\tau' \in \Sigma_d\setminus \{\tau\}$. Consider the following pairs of flags:
\begin{equation}
(F''_{\bullet},F'''_{\bullet})=(g_{\tau\tau'^{-1}(i)})_i \cdot (\Fstd_{\bullet}\tau',(w_{\tau'^{-1}(i)})_{\red{i}}\Fstd_{\bullet}\tau'\sigma) {\in} \cO(w)_{\tau'}.
\end{equation} 
Then, $y:=(y_i)_i \coloneqq (x_{\tau\tau'^{-1}(i)})_i$ lies in $\cO$, and thus $(y,F''_{\bullet},F'''_{\bullet}) \in Z_{\cO} \cap Y_{w,\tau'}$. The claim is proved.

As a result, $H(Z_{\cO \cap U})$ has a basis $[\overline{Y_{w,\tau}|_{U}}]$  for some $w \in \Sigma_m \wr \Sigma_d$ that depends on $\cO$ and for all $\tau \in \Sigma_d$. 
Next, we choose $e=\xi_1,...,\xi_l$ to be the coset representatives of $\Sigma_d/C(x)$ such that $x^{(i)}=\xi_i \cdot x$. 
Then, $\cO$ decomposes into connected components 
$\cO=G_{m \wr d} \cdot x = \bigsqcup_{i=1}^l \GL^d_m \cdot x^{(i)}= \bigsqcup_{i=1}^l \cO_i$. 
Fix $\eta \in \Sigma_d$ such that $Y_{w,\eta} \cap Z_{\cO_1} \neq \emptyset$. 
Then, $Y_{w,\xi_i\eta} \cap Z_{\cO_i} \neq \emptyset$ for all $i$.
Thus, $f_2$ sends each $\overline{[Y_{w,\xi_i\eta}|_{U}]}$ to $(0,...,[\overline{\cO(w)_{\xi_i\eta}}],...,0)$, which is supported at the $i$th component.

Write $Z_x:= \fB_x \times \fB_x$ for short.
For each $1 \leq i \leq l$, the natural isomorphism $H(Z_x) \xrightarrow{\cong} H(Z_{x^{(i)}})$, $[X] \mapsto [\xi_i(X)]$ tells us that  $\cO(w)_{\xi_i\eta}=\xi_i(\cO(w)_{\eta})$. 
In other words, there is a copy of $H(Z_x)$ (denoted by $\Delta_{H(Z_x)}$) inside $\bigoplus_{i=1}^l H(Z_{x^{(i)}})$ via 
$[X] \mapsto ([X],[\xi_2(X)],...,[\xi_l(X)])$.

Finally, we consider the $C(x^{(i)})$-orbit on the components of $Z_{x^{(i)}}$ for each $i$. 
Then, there is an invariant subspace $H(Z_{x^{(i)}})^{C(x^{(i)})}$ for each $i$. 
Furthermore, since all the component groups are isomorphic $C(x) \cong C(x^{(2)}) \cong ... \cong C(x^{(l)})$, 
we also obtain an invariant subspace of the diagonal. 
In other words, the following diagram commutes:
\begin{equation}
	\xymatrix{
		\Delta_{H(Z_x)^{C(x)}} \ar@{^{(}->}[r] \ar@{^{(}->}[d] &  \Delta_{H(Z_x)} \ar@{^{(}->}[d] \\
		\bigoplus_{i=1}^l H(Z_{x^{(i)}})^{C(x^{(i)})} \ar@{^{(}->}[r]  & \bigoplus_{i=1}^l H(Z_{x^{(i)}}).
	}
\end{equation}
By Theorem \ref{mainthm}, 
$A_{\cO} \coloneqq A \cap H(Z_{\leq \cO})/A \cap H(Z_{<\cO})$ is spanned by elements of the form $\sum_{\tau \in \Sigma_d} [\overline{Y_{w,\tau}}]+A \cap H(Z_{<\cO})$. 
Restricting ${f}$ to $A_{\cO}$, 
we find its image lies in $\Delta_{H(Z_x)^{C(x)}}$.
Precisely, the following describes the desired isomorphism:
\eq
\begin{array}{crcl}
	\Bar{f}:
	& A \cap H(Z_{\leq \cO})/A \cap H(Z_{<\cO}) 
	&\to
	&\Delta_{H(Z_x)^{C(x)}} \cong H(Z_x)^{C(x)} = H(\fB_x \times \fB_x)^{C(x)} 
	\\
	&\sum_{\tau \in \Sigma_d} [\overline{Y_{w,\tau}}]+A \cap H(Z_{<\cO}) 
	&\mapsto 
	&\sum_{\phi \in C(x)} ([\overline{\cO(w)_{\phi\eta}}], \ldots, [\overline{\cO(w)_{\phi\xi_l\eta}}]).
	\end{array}
\endeq



\end{proof}

\begin{exa}
	Let $m=d=2$. 
	There are three $G_{2\wr 2}$-orbits:
	\begin{equation*}
		\cN^2_2=\cO[(1,1),(1,1)] \bigsqcup \cO[(2),(1,1)] \bigsqcup \cO[(2),(2)],
	\end{equation*} 
	We will present details for the above Lemma for the orbits of the following elements:
	\begin{equation*}
		x=(\begin{psmallmatrix}
			0 & 1 \\ 0 & 0
		\end{psmallmatrix}, \begin{psmallmatrix}
			0 & 1 \\ 0 & 0
		\end{psmallmatrix}) \in \cO[(2),(2)], 
		\quad 
		y=(\begin{psmallmatrix}
			0 & 1 \\ 0 & 0
		\end{psmallmatrix}, \begin{psmallmatrix}
			0 & 0 \\ 0 & 0
		\end{psmallmatrix}) \in \cO[(2),(1,1)]. 
	\end{equation*}
	For the orbit $\cO[(2),(2)]$,  the component group is $C(x) \cong \Sigma_2$.
	The Springer fiber is $\fB_x=\{\Fstd_{\bullet} \}^2 \times \Sigma_2$. 
	Since $Y_{(e,e)\sigma,\tau} \cap Z_{\cO[(2),(2)]} \neq \emptyset$ for all $\sigma, \tau \in \Sigma_2$, 
	the restriction of ${f}$ in Lemma \ref{lem:A3} is
\eq
		H(Z_{U \cap \cO[(2),(2)]}) \rightarrow H(\fB_x \times \fB_x) \quad
		[\overline{Y_{(e,e)\sigma,\tau}}]=[Y_{(e,e)\sigma,\tau}] \mapsto [\{\Fstd_{\bullet} \}^2\tau \times \{\Fstd_{\bullet} \}^2\tau\sigma]. 
\endeq
	As a result, $\Bar{f}$ sends the sum $\sum_{\tau \in \Sigma_2}[\overline{Y_{(e,e)\sigma,\tau}}]$ to the $C(x)$-orbit of $[\{\Fstd_{\bullet} \}^2 \times \{\Fstd_{\bullet} \}^2\sigma]$.
	
	Next, for the orbit $\cO[(2),(1,1)]$, we have $C(y) \cong \Sigma_1 \times \Sigma_1$ and $\fB_{y}=\{\Fstd_{\bullet} \} \times \cF l_2 \times \Sigma_2$. 
	In this case, the orbit has two components $\cO[(2),(1,1)]=\cO((2),(1,1)) \bigsqcup \cO((1,1),(2))$. Let  $y'=(\begin{psmallmatrix}
		0 & 0 \\ 0 & 0
	\end{psmallmatrix}, \begin{psmallmatrix}
		0 & 1 \\ 0 & 0
	\end{psmallmatrix}) \in \cO((1,1),(2))$.
	
	It can be checked that both $Y_{(s_1,e) \sigma,\tau}$ and $Y_{(e,s_1) \sigma,\tau}$ interact with $Z_{\cO[(2),(2)]}$ non-trivially for all $\sigma, \tau \in \Sigma_2$. Thus, the restriction of ${f}$ is given by 
	\begin{align*}
		H(Z_{U \cap \cO[(2),(1,1)]}) &\rightarrow H(\fB_y \times \fB_y)
		\oplus  H(\fB_{y'} \times \fB_{y'}) 
		\\
		[\overline{Y_{(e,s_1)\sigma,e}}] &\mapsto ([\{\Fstd_{\bullet}\} \times \cF l_2 \times \{\Fstd_{\bullet} \} \times \cF l_2 \sigma],0), 
		\\
		[\overline{Y_{(e,s_1)\sigma,t}}] &\mapsto (0,
		[ \cF l_2  \times \{\Fstd_{\bullet}\} t \times \cF l_2  \times \{\Fstd_{\bullet}\} t\sigma]),
		\\
		[\overline{Y_{(s_1,e)\sigma,e}}] &\mapsto (0,[ \cF l_2  \times \{\Fstd_{\bullet}\}  \times \cF l_2  \times \{\Fstd_{\bullet}\} \sigma]) ,
		\\
		[\overline{Y_{(s_1,e)\sigma,t}}] &\mapsto ([\{\Fstd_{\bullet}\} \times \cF l_2 t \times \{\Fstd_{\bullet} \} \times \cF l_2 t\sigma],0).
	\end{align*} 
	As a result, $\Bar{f}$ sends the sum $\sum_{\tau \in \Sigma_2}[\overline{Y_{(e,s_1)\sigma,\tau}}]$ to the $C(x)$-orbit of the diagonal $([\{\Fstd_{\bullet}\} \times \cF l_2 \times \{\Fstd_{\bullet} \} \times \cF l_2 \sigma],[ \cF l_2  \times \{\Fstd_{\bullet}\} t \times \cF l_2  \times \{\Fstd_{\bullet}\} t\sigma])$.
 Similarly, the other sum $\sum_{\tau \in \Sigma_2}[\overline{Y_{(s_1,e)\sigma,\tau}}]$ is mapped to the $C(x)$-orbit of the diagonal $([\{\Fstd_{\bullet}\} \times \cF l_2 t \times \{\Fstd_{\bullet} \} \times \cF l_2 t\sigma],[ \cF l_2  \times \{\Fstd_{\bullet}\}  \times \cF l_2  \times \{\Fstd_{\bullet}\} \sigma])$.
\end{exa}

\subsection{Verification of (A4)}
In \red{Chriss--Ginzburg \cite{CG97}}, the proof of that $H(\fB_x)$ is a $(H(Z), C(x))$-bimodule is based on a fact that $H(Z)$ is invariant under the $G$-action $z \mapsto g(z)$ that arises from the automorphism on $Z$ (see \cite[Lemma 3.5.2]{CG97}).
However, it can be seen in the example below that the invariance fails in our setup:
\exa \label{ex:A4}
Let $m=d=2$,  $w =(s_1,e) \in \Sigma_2 \wr \Sigma_2$, $\tau = t \in \Sigma_2$, and  $g = (\id,\id,t) \in G_{2 \wr 2}$.
Then, $[\red{\overline{Y_{w,t}}}]=[Y_{w,t}]+[Y_{e,t}]$, and hence
\eq
g([\red{\overline{Y_{w,t}}}])
=g([Y_{w,t}]) + g([Y_{e,t}])
=[Y_{w,e}]
+[Y_{e,e}]
=[\red{\overline{Y_{w,e}}}] \neq [\red{\overline{Y_{w,t}}}].
\endeq
\endexa 
This is another evidence that one should be considering summing over the classes $[Y_{w,\tau}]$ for $\tau \in \Sigma_d$ in order to obtain a $G$-invariant subalgebra of $H(Z)$.

Now, fix $x\in \cN_m^d$. We begin with detailed descriptions of actions
\eq
H(Z_{m \wr d})\times H(\fB_x) \to H(\fB_x), \ (z, c) \mapsto z * c,
\quad
H(\fB_x)\times G_{m \wr d} \to \bigoplus_{g \in G_{m \wr d}} H(\fB_{g \cdot x}), \ (c, g) \mapsto c \cdot g.
\endeq
Firstly, the left $H(Z_{m \wr d})$-action on $H(\fB_x)_L$ arises from the composition $Z \circ \fB_x = \fB_x$.
Secondly, note that $g=(g_i)_iw \in G_{m \wr d}$ gives a map
\eq
g:\fB_x \rightarrow \fB_{g \cdot x}, 
\quad 
(x,(F^i_{\bullet})\sigma) \mapsto (g \cdot x,(g_i)_iw(F^i_{\bullet})\sigma),
\endeq 
via the conjugation $g \cdot x$ (see \eqref{nilact}) on $\cN_m^d$,
and the natural action $(g_i)_iw(F^i_{\bullet})\sigma$ on $\cF l_{m \wr d}$ (see Proposition~\ref{compatabilitywrfl}(c)). 
Thus, $g$ induces a morphism $H(\fB_x)  \to H(\fB_{g \cdot x})$ and thus a right $C(x)$-action on $H(\fB_x)$.  

\begin{lemma} \label{lem:A4}
For all  $x\in \cN_m^d$, $H(\fB_x)_L$ is an $(A_{m \wr d}, C(x))$-bimodule.
\end{lemma}

\begin{proof}
It follows from construction that $(z * c) \cdot g = g(z) * (c \cdot g)$ for all $z \in H(Z_{m \wr d})$, $c\in H(\fB_x)$, and $g\in G_{m \wr d}$.
The lemma follows as long as $g(a)=a$ for all $g \in C(x)$ and $a \in A_{m \wr d}$. 
Thanks to Theorem \ref{mainthm},  $A_{m \wr d}$ is spanned by $[\overline{Y}_w] =\sum_{\tau \in \Sigma_d} [\overline{Y}_{w,\tau}]$.
Hence, each $g=(g_i)_i\eta \in G_{m \wr d}$ acts by
\eq
\begin{split}
g([\red{\overline{Y_{w}}}])
&=\sum_{\tau \in \Sigma_d} g([\red{\overline{Y_{w,\tau}}}]) 
=\sum_{\tau \in \Sigma_d, w' \leq_{m \wr d} w} g([Y_{w',\tau}]) 
\\
&=\sum_{\tau \in \Sigma_d, w' \leq_{m \wr d} w} [Y_{w',\eta\tau}] 
= \sum_{\tau' \in \Sigma_d, w' \leq_{m \wr d} w} [Y_{w',\tau'}]=[\red{\overline{Y_{w}}}]. 
\end{split}
\endeq
The proof is complete.
\end{proof}

\subsection{Springer Correspondence for Wreath Products}
We are now in a position to prove the classification theorem for simple modules over $\CC[\Sigma_m \wr \Sigma_d]$.
\begin{theorem}[Springer correspondence for $\Sigma_m \wr \Sigma_d$] \label{irredrep}
Let $H(\fB_x)_{\psi}$ and $C(x)^\wedge$ be the $\psi$-isotypic component and the subset of $\widehat{C(x)}$ consisting of simple modules which occur in $\HBMt(\fB_x;\CC)$, respectively. 
Then,
$\widehat{\Sigma_m \wr \Sigma_d} = \{ H(\fB_x)_{\psi} ~|~ [x,\psi] \in I^S_{m \wr d}\} $, where
\eq\label{def:IS}
I^S_{m \wr d}:= \{ G_{m \wr d}\textup{-conjugacy class } [x,\psi] ~|~ x \in \cN^d_m,\ \psi \in C(x)^{\wedge}  \}.
\endeq
\end{theorem}
\begin{proof}
We have checked in Proposition~\ref{setup} that $\mu_{m \wr d}$ is a Springer resolution.
It suffices to check that the subalgebra $A_{m \wr d}$ satisfy (A1) -- (A4). 
For (A1), it follows from Theorem \ref{mainthm} since group algebras of finite groups are semisimple. 
For (A2), it follows from Lemma~\ref{lem:A2}.
For (A3), it follows from Lemma~\ref{lem:A3}.
For (A4), it follows from Lemma~\ref{lem:A4}.
Thus, Theorem~\ref{thm:irreps} applies, and we are done.
\end{proof}

\red{In the next section, we will provide some examples for this result, including Springer correspondence for type $B/C/D$ Weyl groups.}

\section{Clifford Theory}\label{sec:CT}
\subsection{Clifford Theory for Finite Groups}
We take a detour to review the well-known classification of irreducible modules over the group algebra $\CC [\Sigma_m \wr \Sigma_d]$ using Clifford theory,
following \cite[\S2.6.1]{CST14} (see also \cite{CR81}). 

Recall that  $\Pi_m$ is the set of partitions $\nu$ of $m\in \ZZ_{\geq 0}$. Let $\Pi = \bigcup_{m \geq 0} \Pi_m$.
Let $|\nu| \in \ZZ_{\geq 0}$ be the number partitioned by $\nu \in \Pi$.
Then, $\redtwo{\widehat{\Sigma_{m}}}  = \{S_m^\nu ~|~ \nu \in \Pi_m\}$, where  $S_m^\nu$ is the corresponding (irreducible) Specht module over $\Sigma_m$.

For any map $\bblambda : \Pi_m \to \Pi$ (equivalently, a multi-partition $\bblambda = (\bblambda(1), \dots, \bblambda(\red{\#}\Pi_m))$ of $\red{\#}\Pi_m$ components), define a map $|\bblambda|: \Pi_m \to \ZZ_{\geq 0}$ by setting $|\bblambda|(\nu) := |\bblambda(\nu)|$ for all $\nu \vdash m$.
\red{Each map $\gamma:\Pi_m \to \ZZ_{\geq 0}$ with $\sum_{\nu \vdash m} \gamma(\nu)= d$} is associated with a Young subgroup $\Sigma_{\gamma} := \prod_{\nu \vdash m}\Sigma_{\gamma(\nu)} \subseteq \Sigma_d$. Set
\eq\label{def:IC}
I^C_{m\wr d} := \{ \bblambda : \Pi_m \to \Pi ~|~ \textstyle{\sum_{\nu \vdash m}}|\bblambda (\nu)| = d\}.
\endeq
Let $\bblambda \in I^C_{m\wr d}$, and let $\gamma = |\bblambda|$.
Define the following irreducible modules:
\eq \label{irreducibleSpecht}
S^\bblambda := {\textstyle\bigotimes_{\nu \vdash m}} S_{|\bblambda(\nu)|}^{\bblambda(\nu)} \in 
\textup{Irr-}\CC[\Sigma_\gamma],
\quad
S^{\gamma} := {\textstyle\bigotimes_{\nu \vdash m}} (S_m^\nu)^{\otimes \gamma(\nu)} \in 
\textup{Irr-}\CC[\Sigma_m^d].
\endeq
Denote the extension module of $S^\gamma$ over $\CC[\Sigma_m \wr \Sigma_\gamma]$ by $\widetilde{S^\gamma}$, and denote the inflation module over $\CC[\Sigma_m \wr \Sigma_\gamma]$ of $S^\bblambda$ to inertia group $\Sigma_m \wr \Sigma_\gamma$  by $\Infl S^\bblambda$.
That is, $\widetilde{S^\gamma} = S^\gamma$ and $\Infl S^\bblambda = S^\bblambda$ as vector spaces, while the actions of additional group elements are given by, respectively, place permutations on tensor factors, and the trivial action.

\begin{prop}[Clifford theory for $\Sigma_m \wr \Sigma_d$]
The complete list of \red{isomorphism classes} of irreducible representations of $\CC[\Sigma_m \wr \Sigma_d]$ is given by
\[
\widehat{\Sigma_m \wr \Sigma_d}
=
\left\{  L^\bblambda ~\middle|~ \bblambda \in I^C_{m \wr d} \right\},
\quad
\textup{where }\gamma = |\bblambda|, \ 
L^\bblambda := \Ind_{\Sigma_m \wr \Sigma_{\gamma}}^{\Sigma_m \wr \Sigma_d}(\widetilde{S^{\gamma}} \otimes \Infl S^\bblambda).
\]
\end{prop}

\subsection{Characterization of the Index Set}

For completeness, in this subsection, we give an explicit characterization for the index set $I^S_{m \wr d}$ from the Springer correspondence (Theorem \ref{irredrep}), and then provide an identification with the index set $I^C_{m \wr d}$ in the algebraic approach from Clifford theory.

From \eqref{def:IS}, each irreducible is indexed by a $G_{m \wr d}$-conjugacy  class  $[x,\psi]$ for some $x =(x_i)_i \in \cN_m^d$ and $\psi \in C(x)^{\wedge}$.
Each $x_i \in \cN_m$ has Jordan type $\lambda_i \vdash m$.
Equivalently, $x$ is associated with a map $\gamma(x)$ given by
\eq\label{def:gammax}
\gamma(x): \Pi_m \to \ZZ_{\geq 0},
\quad
\nu \mapsto  \red{\#}\{ i \in \{1, \dots, d\} ~|~ \lambda_i = \nu\}.
\endeq

{\prop \label{structureCx}
Let $x \in \cN^d_m$. Recall $\gamma(x)$ from \eqref{def:gammax}.
Then, the component group $C(x)$ is isomorphic to the Young subgroup $\Sigma_{\gamma(x)}$.
\endprop}

\begin{proof}
The centralizer of $x$ in $G_{m \wr d}$ is given by
\eq
\begin{split}\label{centralizerx}
C_{G_{m \wr d}}(x)&= \{ (g_i)_iw \in G_{m \wr d} \ | \ (g_ix_{w^{-1}(i)}g^{-1}_i)_i = (x_i)_i,  \ w\in \Sigma_d\}
\\
&=\{ (g_i)_iw \in G_{m \wr d} \ | \  (g_ix_{w^{-1}(i)}g^{-1}_i)_i=(x_i)_i, \ w\in \Sigma_\gamma \},
\end{split}
\endeq
since \red{$x_i$ is conjugate to $x_j$ if and only if $\lambda_i = \lambda_j$}.
Moreover, the identity of $G_{m \wr d}$  lies in $C_{G_{m \wr d}}(x)$. 

For now, we abbreviate the identity component $C_{G_{m \wr d}}(x)^0$ by $C^0$.
We are going to show that the component group $C(x) := C_{G_{m \wr d}}(x)/C^0$ is isomorphic to $\Sigma_{\gamma(x)} := \prod_{\nu\vdash m} \Sigma_{\gamma(x)(\nu)}$. 
\red{We know that $x=(x_1,...,x_d)$ has Jordan type $(\lambda_1,...,\lambda_d)$. Under the $G_{m \wr d}$-conjugation action, we obtain the equivalence class   $[(\lambda_1,...,\lambda_d)]$ under the $\Sigma_d$-action. Thus, without loss of generality, we can rearrange the partitions such that that
$x_{1}$, $\dots$, $x_{\gamma(x)(\lambda_1)}$ are of the same type $\lambda_1 \vdash m$, and $x_{\gamma(x)(\lambda_1)+1}$, $\dots$, $x_{\gamma(x)(\lambda_1)+\gamma(x)(\lambda_2)}$ are of the same type $\lambda_2 \vdash m$, and so on. We write $a:= \gamma(x)(\lambda_1)$ for short.}

Next, we want to construct corresponding elements in $C(x)$ that generate a copy of $\Sigma_a$.
Consider elements of the form $(g_1, \cdots, g_{a}, 1, \cdots, 1) t_k \in C_{G_{m \wr d}}(x)$  for some $g_i \in \GL_m$ and  $t_k \in \Sigma_{a}$. 
For all $1 \leq j \leq a-1$, let $r_j \in \GL_m$ be such that $r_j x_{j+1}r_j\inv = x_{j}$.
Thus, each equality $g_i x_{t_{k}^{-1}(i)}g^{-1}_i=x_i$ appearing in \eqref{centralizerx} 
becomes one of the following:
$g_{k} x_{k+1}g^{-1}_{k} = r_k x_{k+1}r_k^{-1}$,
$g_{k+1}x_{k}g^{-1}_{k+1} = r_k^{-1}x_{k}r_k$, or
$g_i x_i g_i\inv = x_i$ if $i \neq k,k+1$. 
Equivalently, 
there exists $g'_i \in C_{\GL_m}(x_i)$ for all $1\leq i \leq a $ such that 
\eq
\red{g'_{k} = r_kg_{k+1}}, \quad
\red{g'_{k+1} = r^{-1}_k g_k}, \quad
\red{g'_i = g_i} \quad \tif i\neq k,k+1.
\endeq 
Let $\sigma_k := (g_1, \cdots, g_{a}, 1^{d-a}) t_k C^0  \in C(x)$ for $1\leq k \leq \gamma(x)\red{(\lambda_1)}-1$.
Then, 
\eq\label{generators}
\begin{split}
	\sigma_k
	&=(g'_1,\dots, g'_{k-1}, r_k g'_{k+1}, r_k^{-1}g'_{k}, g'_{k+2},\dots,g'_{a}, 1^{d-a}) t_{k}C^0 \\
	&=(1^{k-1}, r_k, r_k^{-1},1^{d-k-1})  t_{k} (g'_1, \dots,  g'_{a}, 1^{d-a})C^0 
	=(1^{k-1}, r_k, r_k^{-1}, 1^{d-k-1})  t_{k} C^0,
\end{split}
\endeq 
and thus $\sigma_k$ is of order 2. Next, we verify the braid relations for $\sigma_1, \dots, \sigma_{a}$.
We may assume that $d=4= \gamma(x)(\nu_1)$. 
For adjacent braids, consider $\sigma_1 := (r_1, r^{-1}_1,1,1) t_1 C^0$, $\sigma_2 := (1, r_2, r^{-1}_2,1) t_2 C^0$. 
The braid relation follows from the fact that 
\eq
\begin{split}
\sigma_1 \sigma_2 \sigma_1&=(r_1r_2,r^{-1}_1,r^{-1}_2,1)t_1t_2(r_1,r^{-1}_1,1,1)t_1 C^0=(r_1r_2,1,r_2^{-1}r^{-1}_1,1)t_1t_2t_1C^0, 
\\
\sigma_2 \sigma_1 \sigma_2&=(r_1,r_2,r_2^{-1}r_1^{-1},1)t_2t_1(1,r_2,r^{-1}_2,1)t_2C^0=(r_1r_2,1,r_2^{-1}r_1^{-1},1) t_2t_1t_2C^0.
\end{split}
\endeq
For non-adjacent braids (if any), consider $\sigma_3 := (1,1,r_3,r_3\inv) t_3 C^0$, and then
\eq
\sigma_1 \sigma_3=(r_1,r^{-1}_1,r_3, r_3\inv) t_1t_3C^0=
\sigma_3 \sigma_1.
\endeq
\red{
We have verified that $\sigma_1,...,\sigma_{a-1}$ satisfy the relations in the symmetric group $\Sigma_a$. Moreover, there are no other relations between them. Otherwise there will be relations between $t_1,...,t_{a-1}$, which gives additional relations in the group $\Sigma_a$. Therefore, $C(x) \cong  \Sigma_{a} \times C_{G_{m \wr (d-a)}}(x_{a}+1, \dots,  x_d)$.}
It then follows from an induction on $d$ that $C(x) \cong \Sigma_{\gamma(x)}$, and we are done.
\end{proof}

Recall $C(x)^\wedge$ from \eqref{def:Cxwedge}.
Next, we show that $C(x)^\wedge$ coincides with all irreducibles over $\CC[\Sigma_{\gamma(x)}]$.

\begin{prop} \label{prop:C(x)id}
Let $x \in \cN_m^d$ and let $\gamma = \gamma(x)$. Then, 
\[
C(x)^\wedge = \redtwo{\widehat{\Sigma_{\gamma}}} = \{S^\bblambda ~|~ \bblambda \in I^C_{m \wr d},\ |\bblambda| = \gamma \}.
\]
Moreover, if $\psi = S^\bblambda \in C(x)^\wedge$, then $H(\fB_x)_\psi = L^\bblambda$
\end{prop}
\begin{proof}
It suffices to show that $C(x)^\wedge \supseteq \redtwo{\widehat{\Sigma_{\gamma}}} $. That is, given $x \in \cN_m^d$ and $\bblambda \in I^C_{m \wr d}$ such that $|\bblambda|=\gamma$, the Specht module $S^\bblambda$ occurs in 
$\CC \otimes_\QQ H(\fB_x)$.
Since $C(x)$ is a product of Weyl groups of type A, it suffices to show that $S^\bblambda$ occurs in $\HBMt(\fB_x; \CC)$.

It follows from \eqref{wreathSpringerfiber} and the identification $\cF l_{m \wr d} \equiv \cF l_{m} \wr \Sigma_d$ in Proposition \ref{compatabilitywrfl} (a) that 
\begin{equation} \label{BMwreathSpringer}
\begin{split}
	\HBMt(\fB_x; \CC) &\cong \CC[\Sigma_d] \otimes \HBMt(\fB_{x_1}; \CC) \otimes \dots \otimes \HBMt(\fB_{x_d}; \CC)
\\
&\cong \bigoplus_{w \in \Sigma_d} w ( \bigotimes_{\nu \vdash m} (S_m^\nu)^{\otimes \gamma(\nu)} ) 
= \bigoplus_{w \in \Sigma_d}  w S^{\gamma},
\end{split}
\end{equation} 
on which $\Sigma_d$ acts  by tensor factor permutations (since $\Sigma_{\nu \vdash m} \gamma(\nu)=d$).
It in turn gives the explicit $\CC[\Sigma_m \wr \Sigma_d]$-module structure on $\HBMt(\fB_x;\CC)$. 


Recall from \eqref{irreducibleSpecht} the Specht module $S^{\bblambda}$ over $\CC[\Sigma_{\gamma}]$. 
Via the inclusion $\Sigma_{\gamma} \subseteq \Sigma_d \subseteq  \Sigma_m \wr \Sigma_d$, $\HBMt(\fB_x ;\CC)$ affords a $\CC[\Sigma_{\gamma}]$-module structure. 
Moreover, from \eqref{BMwreathSpringer}, $\HBMt(\fB_x ;\CC)$ contains the subspace $\bigoplus_{w \in \Sigma_{\gamma}} w S^{\gamma}$, which is a regular representation of $\Sigma_{\gamma}$.
Since the regular representation contains all irreducible representations, we are done.

\end{proof}
As a consequence, we can now identify the irreducibles arising from Clifford theory with those arising from Springer theory.
Recall the sets $I^S_{m\wr d}$, $I^C_{m\wr d}$, and the module $S^\bblambda$ from \eqref{def:IS}, \eqref{def:IC}, and \eqref{irreducibleSpecht}, respectively. 

\cor  \label{identindexset}
There is a natural bijection $\Psi:I^C_{m\wr d} \to I^S_{m\wr d}$ between the index sets such that $H(\fB_{x})_\psi \cong L^{\Psi\inv([x,\psi])}$.
The maps $\Psi$ and $\Psi\inv$ are given, respectively, by
\eq
\Psi(\bblambda) = [x_\bblambda, S^\bblambda],
\quad
\Psi\inv([x,\psi]) = \bblambda_\psi,
\endeq
where $x_\bblambda \in \cN_m^d$ is an element of Jordan type $(\lambda_i)_i \in \Pi_m^d$ such that $|\bblambda(\nu)| = \gamma(x_\bblambda) := \#\{1\leq i \leq d ~|~ \lambda_i = \nu\}$ for  $\nu \vdash m$;
while $\bblambda_\psi$ is the element in $I^C_{m \wr d}$ such that $\psi \cong S^{\bblambda_\psi} := \bigotimes_{\nu \vdash m} S^{\bblambda_\psi(\nu)} \in \Irr\CC[\Sigma_{\gamma(x)}]$.
%
\endcor 
\proof
It follows from the construction that $\Psi$ is well-defined. On the other hand, any $\psi \in C(x)^\wedge$ is isomorphic to $S^{\bblambda_\psi}$ thanks to Proposition~\ref{prop:C(x)id}.
It is routine to check that $\Psi$ and $\Psi\inv$ are inverse to each other.
\endproof

\subsection{Extreme Cases}\label{sec:EC}
\subsubsection{Type B and Type C}
First, we consider an extreme case that $m=2$ so that the wreath product $\Sigma_2 \wr \Sigma_d$ is a Weyl group of type $B_d = C_d$.
Our theorem (that uses a essentially type A geometry) gives a new Springer correspondence for Weyl groups of type B/C.

\exa[new Springer correspondence for type B/C]
Let $\Pi_d^{(2)} := \{ (\nu', \nu'') ~|~ \nu' \vdash a, \nu'' \vdash d-a\}$ be the set of bipartitions of $d$.
The index set $I^C_{2 \wr d} = \{ \bblambda : \{\iyng{2}, \iyng{1,1}\}\to \Pi ~|~ |\bblambda(\iyng{2})|+ |\bblambda(\iyng{1,1})| = d\}$ is in bijection with $\Pi_d^{(2)}$ via $\bblambda \mapsto (\lambda^{(1)}, \lambda^{(2)})$, where $\lambda^{(1)} = \bblambda(\iyng{2})$, $\lambda^{(2)} = \bblambda(\iyng{1,1})$.

For the geometric index set $I^S_{2 \wr d}$, note that any $x \in \cN_2^d$ has Jordan type $(2)^a (1,1)^{d-a}$ for some $a$.
The component group $C_{G_{2 \wr d}}(x)$ is then isomorphic to the Young subgroup  $\Sigma_a \times \Sigma_{d-a}$, and hence any $\psi \in C(x)^\wedge$ is of the form $S^{\lambda'}\otimes S^{\lambda''}$ for some $\lambda' \vdash a$, $\lambda'' \vdash d-a$.
Therefore, we obtain the following bijection:
\eq
\Pi_d^{(2)} \equiv I^C_{2 \wr d} \to I^S_{2 \wr d},
\quad
\bblambda \mapsto [x, S^{\bblambda(\iyng{2})}\otimes S^{\bblambda(\iyng{1,1})}],
\endeq
as well as a new Springer correspondence in terms of type A geometry:
\eq
\widehat{W_{B_d}} \equiv \widehat{W_{C_d}} 
= \{ L^\bblambda ~|~ \bblambda \in I^C_{2 \wr d}\}
= \{ H(\fB_{x})_{\psi} ~|~ [x,\psi] \in I^S_{2\wr d}\}.
\endeq
\endexa

\subsubsection{Type Hu and Type D}
Next, we consider the other extreme case when $d=2$. 
The wreath product $\Sigma_m \wr \Sigma_2$ is generally not a Coxeter group.
We remark that irreducibles for the Hu algebras share the same index set 
\eq\label{def:IH}
I^H_{m \wr 2}:= \{[\nu', \nu''] ~|~ \nu' \neq \nu'' \in \Pi_m\} \sqcup \{[\nu,\nu]_+, [\nu,\nu]_- ~|~ \nu \in \Pi_m\}
\endeq 
with the irreducibles for $\CC[\Sigma_m \wr \Sigma_2]$
(see \cite{Hu02, LNX24}), where $[\nu', \nu'']$ denotes the equivalence class in $\Pi_m^2$ under the obvious $\Sigma_2$-action.
Note that each equivalence class of the form $[\nu, \nu]$ corresponds to two simple modules, and hence one can index these modules (called $D(\nu)_+$ and $D(\nu)_-$ in \cite[\S4]{Hu02}) by $[\nu, \nu]_+$ and $[\nu,\nu]_-$, respectively.

\exa[The wreath product $\Sigma_m \wr \Sigma_2$] \label{ex:Hu}
Recall that $I^C_{m \wr 2} = \{ \bblambda : \Pi_m \to \Pi ~|~ \sum_{\nu\vdash m} |\bblambda(\nu)| = 2\}$.
We have a bijection $I^C_{m \wr 2} \to I^H_{m \wr 2}$ given by
\eq
\bblambda \mapsto \begin{cases}
[\nu, \nu]_+ &\tif \bblambda(\nu) = \iyng{2};
\\
[\nu, \nu]_- &\tif \bblambda(\nu) = \iyng{1,1};
\\
[\nu', \nu''] &\tif \bblambda(\nu') = \bblambda(\nu'') = \iyng{1}.
\end{cases}
\endeq
On the other hand,  any $x \in \cN_m^2$ either has Jordan type  $\nu^2$ or $\nu'\nu''$ for some $\nu \in \Pi_m$ or some distinct partitions $\nu' \neq \nu'' \in \Pi_m$.
For the former case, $C(x) \cong \Sigma_2$ and hence $x$ corresponds to two distinct elements $[x, S^{\iyng{2}}]$ and $[x, S^{\iyng{1,1}}]$ in $I^S_{m \wr 2}$.
Note that the plus and minus signs in \eqref{def:IH} now correspond to a choice of the trivial module $S^{\iyng{2}}$ or the sign module $S^{\iyng{1,1}}$ over $\CC[\Sigma_2]$.
For the latter case, $C(x) \cong \Sigma_1\times \Sigma_1$ and hence $x$ corresponds to exactly one element $[x, S^{\iyng{1}}]$ in $I^S_{m \wr 2}$.

Finally, $\Psi$ sends $\bblambda$ to $[x, S^{\bblambda(\nu)}]$ if $|\bblambda(\nu)| =2$ for some $\nu\vdash m$ and some $x\in \cN_m^2$ with Jordan type $\nu^2$;
 and it sends $\bblambda$ to $[x, S^{\iyng{1}}]$ if $\bblambda(\nu') = \bblambda(\nu'') = (1)$ for some  $\nu' \neq \nu'' \vdash m$ and some $x\in \cN_m^2$ with Jordan type $\nu' \nu''$.
\endexa
\rmk[new Springer correspondence for type D]
Combining Example~\ref{ex:Hu} with the Morita equivalence result, the index set of the irreducibles for the Weyl group (and hence the Hecke algebra) of type $D$ is in bijection with the following set:
\eq
I^H_{D_{d}}:=
\begin{cases}
 \{[\nu', \nu''] ~|~ (\nu', \nu'') \in \Pi_{d}^{(2)}, \ \nu' \neq \nu''\} 
&\tif d=2m+1;
\\
 \{[\nu', \nu''] ~|~ (\nu', \nu'') \in \Pi_{d}^{(2)}, \ \nu' \neq \nu''\} 
\sqcup
\{[\nu, \nu]_+, [\nu, \nu]_- ~|~ (\nu, \nu) \in \Pi_{d}^{(2)}\} &\tif d=2m.
\end{cases}
\endeq
Let $\cN_{d}^{(2)} := \bigsqcup_{1\leq a \leq d} \cN_a \times \cN_{d-a}$.
For $x \in \cN_{d}^{(2)}$, let $(\nu'_x, \nu''_x) \in \Pi_{d}^{(2)}$ be the corresponding Jordan type. 
Define
\eq
I^S_{D_d}:= 
\begin{cases}
\{[x, S^{\iyng{1}}] ~|~ x\in  \cN_{d}^{(2)}, \nu'_x \neq \nu''_x \} 
&\tif d=2m+1;
\\
\{[x, S^{\iyng{1}}] ~|~ x\in  \cN_{d}^{(2)}, \nu'_x \neq \nu''_x \} 
\sqcup 
\{[x, S^{\iyng{2}}], [x, S^{\iyng{1,1}}] ~|~ x\in  \cN_{d}^{(2)}, \nu'_x = \nu''_x \} &\tif d=2m.
\end{cases}
\endeq
Therefore, we obtain a bijection $\widehat{W_{D_d}} \equiv I^H_{D_d} \to I^S_{D_d}$ given by
\eq
[\nu', \nu''] \mapsto [x, S^{\iyng{1}}],
\quad
[\nu', \nu'']_{\epsilon} \mapsto 
\begin{cases}
{}[x, S^{\iyng{2}}] &\tif \nu' = \nu'',\ \epsilon = +;
\\
{}[x, S^{\iyng{1,1}}] &\tif  \nu' = \nu'',\ \epsilon = -,
\end{cases}
\endeq
where $x \in \cN^{(2)}_d$ is some element of Jordan type $\nu'\nu''$ with $(\nu', \nu'') \in \Pi^{(2)}_d$.
In other words, we obtain a new Springer correspondence for type D using geometry of type A:
\eq\label{eq:SpringerD}
\widehat{W_{D_d}} 
= \{ H(\fB_{x})_{\psi} ~|~ [x,\psi] \in I^S_{D_d}\},
\endeq
where the isotypic components are given by the wreath Springer fiber construction if $\psi \not\eq S^{\iyng{1}}$. 
Otherwise, they are tensor products $H(\fB_{x})_{\psi} \cong H(\fB_{x_1}) \otimes H(\fB_{x_2})$ of the type A isotypic components.

We remark that the correspondence \eqref{eq:SpringerD} differs from the original Springer correspondence, which can be described via the Lusztig-Shoji algorithm \cite{Lu79, Sh79, Sh83} in terms of type D geometry.
See also \cite{SW19} for a diagrammatic approach to Springer correspondence for type D, for two-row partitions.
\endrmk

\end{document}